\def\reflb#1#2{\begingroup
    #2%
    \def\@currentlabel{#2}%
    \phantomsection\label{#1}\endgroup
}
\definecolor{darkred}{rgb}{1,0,0} 
\definecolor{darkgreen}{rgb}{0,0.8,0}
\definecolor{darkblue}{rgb}{0,0,1}
\newtheorem{thm}{Theorem}
\numberwithin{thm}{section}
\numberwithin{equation}{section}
\newtheorem{theorem}[thm]{Theorem}
\newtheorem*{theorem*}{Theorem}
\newtheorem{corollary}[thm]{Corollary}
\newtheorem*{corollary*}{Corollary}
\newtheorem{lemma}[thm]{Lemma}
\newtheorem{proposition}[thm]{Proposition}
\newtheorem*{conjecture*}{Conjecture}
\newtheorem*{question*}{Question}
\newtheorem{definition}[thm]{Definition}
\newtheorem*{definition*}{Definition}
\newtheorem*{definitions*}{Definitions}
\newtheorem*{rem*}{Remark}
\theoremstyle{remark}
\newtheorem{remark}[thm]{Remark}
\newtheorem*{remark*}{Remark}
\newtheorem*{remarks*}{Remarks}
\newtheorem*{example*}{Example}
\newtheorem*{examples*}{Examples}
\newcommand{\R}{\mathbb{R}}
\newcommand{\Z}{\mathbb{Z}}
\newcommand{\Q}{\mathbb{Q}}
\newcommand{\C}{\mathbb{C}}
\newcommand{\N}{\mathbb{N}}
\def\CP{{\mathbb C}P}
\def\RP{{\mathbb R}P}
\newcommand{\id}{\mathit{id}}
\newcommand{\Id}{\mathit{Id}}
\newcommand{\tspan}{\text{span}}
\newcommand{\ep}{\epsilon}
\newcommand{\ga}{\gamma}
\newcommand{\Ga}{\Gamma}
\newcommand{\al}{\alpha}
\newcommand{\vr}{\varphi}
\newcommand{\om}{\omega}
\newcommand{\Cont}{Cont}
\newcommand{\Diff}{\textit{Diff}\,}
\newcommand{\Sp}{\mathrm{Sp}}
\newcommand{\sign}{\operatorname{sign}}
\def\cz{{\mu}}
\def\P{{\mathcal P}}
\def\S{{\mat\HCal S}}
\def\mi{{\widehat\mu}}
\def\Bott{{\mathcal B}}
\def\bga{{\bar\gamma}}
\def\balpha{{\bar\alpha}}
\def\la{{\langle}}
\def\ra{{\rangle}}
\def\S{{\mathcal S}^{\epsilon'}}
\def\bump{{\mathfrak b}}
\def\cz{{\mu}}
\def\bPsi{{\bar\Psi}}
\def\bvr{{\bar\varphi}}
\def\bPhi{{\bar\Phi}}
\def\S{{\Sigma}}
\def\tGa{{\widetilde\Ga}}
\def\tPhi{{\widetilde\Phi}}
\def\lg{\langle}
\def\rg{\rangle}
\def\cGa{\Gamma^\C}
\def\sf{\text{sf}}
\begin{document}

\title[Dynamical convexity and closed orbits on symmetric spheres]{Dynamical convexity and closed orbits\\ on symmetric spheres}

\author[Viktor Ginzburg]{Viktor L. Ginzburg}
\address{Department of Mathematics, UC Santa Cruz, Santa Cruz, CA
  95064, USA} \email{ginzburg@ucsc.edu} 

\author[Leonardo Macarini]{Leonardo Macarini}
\address{Center for Mathematical Analysis, Geometry and Dynamical Systems,
Instituto Superior T\'ecnico, Universidade de Lisboa, 
Av. Rovisco Pais, 1049-001 Lisboa, Portugal}
\email{macarini@math.tecnico.ulisboa.pt}

\subjclass[2010]{53D40, 37J10, 37J55} \keywords{Closed orbits,
Conley-Zehnder index, Reeb flows, dynamical convexity, equivariant symplectic homology}


\thanks{Viktor Ginzburg was partially supported by Simons Collaboration Grant 581382. Leonardo Macarini was partially supported by FCT/Portugal through the projects UID/MAT/04459/2019 and PTDC/MAT-POR/29447/2017.}

\begin{abstract} 
The main theme of this paper is the dynamics of Reeb flows with symmetries on the standard contact sphere. We introduce the notion of strong dynamical convexity for contact forms invariant under a group action, supporting the standard contact structure, and prove that  in dimension $2n+1$ any such contact form satisfying a condition slightly weaker than strong dynamical convexity has at least $n+1$ simple closed Reeb orbits.  For contact forms with antipodal symmetry, we prove that  strong dynamical convexity is a consequence of ordinary convexity.  In dimension five or greater, we construct examples of antipodally symmetric dynamically convex contact forms which are not strongly dynamically convex, and thus not contactomorphic to convex ones via a contactomorphism commuting with the antipodal map. Finally, we relax this condition on the contactomorphism furnishing a condition that has non-empty $C^1$-interior.
\end{abstract}

\maketitle

\tableofcontents

\section{Introduction and main results}

\subsection{Introduction}

In this paper we focus on the dynamics of Reeb flows on hypersurfaces  with symmetry in the standard symplectic vector space. In this setting, we study the multiplicity problem for closed Reeb orbits (without any non-degeneracy requirements) and the relation between convexity and dynamical convexity. For hypersurfaces with symmetry, dynamical convexity can be replaced by a more subtle and stronger condition taking into account different roles of symmetric and asymmetric orbits, which we refer to as ``strong dynamical convexity". We show that under this condition the number of simple closed Reeb orbits on a hypersurface in $\R^{2n+2}$ is no less than $n+1$. While in general the relation between convexity and strong dynamical convexity is rather involved, convexity implies strong dynamical convexity when the hypersurface is symmetric with respect to the antipodal map. Furthermore, we construct an example of an antipodally symmetric  hypersurface which is dynamically convex but not strongly dynamically convex, and thus not contactomorphic to a symmetric convex hypersurface via a contactomorphism commuting with the antipodal map. Finally, we relax this hypothesis on the contactomorphism furnishing a condition that has non-empty $C^1$-interior.

To elaborate, one of the most fundamental problems in Hamiltonian dynamics is the multiplicity question for simple (i.e., non-iterated) closed orbits of Reeb flows on the standard contact sphere $(S^{2n+1},\xi_{std})$. A long standing conjecture is that there are at least $n+1$ simple closed orbits for \emph{any} contact form $\alpha$ on $(S^{2n+1},\xi_{std})$. This was proved for $n=1$ by Cristofaro-Gardiner and Hutchings \cite{CGH} (in the more general setting of Reeb flows in dimension three) and independently by Ginzburg, Hein, Hryniewicz and Macarini \cite{GHHM}; see also \cite{LL} where an alternate proof was given using a result from \cite{GHHM}. In higher dimensions, the question is completely open without additional assumptions on the hypersurface, such as convexity or certain index requirements or non-degeneracy of closed Reeb orbits.

There is a natural bijection between contact forms $\alpha$ on $(S^{2n+1},\xi_{std})$ and starshapped hypersurfaces $\S_\al$ in $\R^{2n+2}$ so that $\al$ becomes the restriction of the Liouville form to $\S_\al$. We say that a contact form $\al$ on $S^{2n+1}$ is \emph{convex} if $\S_\al$ bounds a strictly convex subset.  Let us denote by $\P$ the set of simple closed Reeb orbits for $\alpha$. When $\alpha$ is convex, a remarkable result due to Long and Zhu \cite{LZ} asserts that $\#\P\geq \lfloor n/2 \rfloor +1$. This result was improved when $n$ is odd by Wang \cite{Wa}, furnishing the lower bound $\#\P\geq \lceil n/2 \rceil +1$.

The convexity requirement is not natural from the point of view of contact topology since it is not a condition invariant under contactomorphisms. An alternative notion, introduced by Hofer, Wysocki and Zehnder \cite{HWZ}, is \emph{dynamical convexity}.  A contact form $\alpha$ on $S^{2n+1}$ is called dynamically convex if every closed Reeb orbit $\ga$ of $\alpha$ has Conley-Zehnder index $\cz(\ga)$ greater than or equal to $n+2$. Clearly dynamical convexity is invariant under contactomorphisms and it is not hard to see that convexity implies dynamical convexity. When $\alpha$ is dynamically convex, the first author and G\"urel \cite{GG} and, independently, Duan and Liu \cite{DL}, proved that $\#\P\geq \lceil n/2 \rceil +1$, showing that the lower bound established by Long, Zhu and Wang in \cite{LZ,Wa}  in the convex case holds for dynamically convex hypersurfaces. 

Another important question is that of the relation between dynamical convexity and convexity: Is every dynamically convex hypersurface symplectomorphic or contactomorphic to a convex one? There seems to be no reason to expect the answer to be affirmative. For convexity can be easily seen to have dynamical consequences going far beyond periodic orbits (e.g., index positivity) while dynamical convexity is a notion strictly limited to the index behavior of closed Reeb orbits. However, this question in general is very hard and no results in this direction have been obtained so far.

As we will show in this work, the situation for both questions changes dramatically when the hypersurface is symmetric, e.g., with respect to the antipodal map. For instance, when $\alpha$ satisfies this condition and is convex, Liu, Long and Zhu showed in \cite{LLZ} that $\#\P\geq n+1$, proving the aforementioned conjecture for this class of contact forms. As has been mentioned above, for hypersurfaces with symmetry a natural replacement of dynamical convexity is strong dynamical convexity introduced in this paper and taking into account the action of the symmetry group; see Definition \ref{def:SDC}. We prove in Theorem \ref{thm:multiplicity}, that if $\alpha$ is strongly dynamically convex then $\#\P\geq n+1$. Then, in Theorem \ref{thm:c=>sdc1}, we show that for hypersurfaces with antipodal symmetry, strong dynamical convexity is a consequence of convexity, obtaining, in this way, a generalization of the result from \cite{LLZ}. Using Theorem \ref{thm:c=>sdc1}, we also show that the question of the relation between dynamical convexity and convexity can be completely answered for antipodally symmetric hypersurfaces in dimension bigger than three. Indeed, we construct an example of an antipodally symmetric  dynamically convex hypersurface of dimension five or greater which is not strongly dynamically convex, and hence not equivalent to a symmetric convex hypersurface via a contactomorphism preserving the symmetry. This is our main result and the content of Theorem \ref{thm:example}. When $n$ is odd, the condition that the contactomorphism preserves the symmetry can be relaxed to a condition that has non-empty $C^1$-interior; see Theorem \ref{thm:example open}. This requires a non-trivial generalization of Theorem \ref{thm:c=>sdc1}, given by Theorem \ref{thm:c=>sdc2}.

We conclude this section by briefly touching upon the role of degenerate orbits in these results and constructions. When all simple closed Reeb orbits are non-degenerate the strong dynamical convexity condition reduces to ordinary dynamical convexity. Hence, degeneracy is crucial to our construction of a dynamically convex, but not convex, symmetric hypersurface. With multiplicity results the situation is more subtle. The existence of $n+1$ simple closed Reeb orbits on a dynamically convex hypersurface with non-degenerate Reeb flow (i.e. such that every closed Reeb orbit, including iterated ones, is non-degenerate) has been established in \cite{GK} as a consequence of the so-called common index jump theorem proved in \cite{LZ}; see also \cite{AM,GG} and Theorem \ref{thm:IRT}. (In fact, this lower bound is now known to hold under much less restrictive index conditions than dynamical convexity; see \cite{DLLW, GGM}.) However, even without symmetry and when all simple closed Reeb orbits are non-degenerate (and thus strong dynamical convexity is equivalent to dynamical convexity),  iterated orbits may degenerate. In this case, our multiplicity result (Theorem \ref{thm:multiplicity}) gives some new information; see Corollaries \ref{cor:multiplicity-notsym} and \ref{cor:semisimple}.

\subsection{Strong dynamical convexity}
\label{sec:sdc}

Before defining strong dynamical convexity, let us set the sign conventions used throughout this paper and introduce some terminology and notation.

\vskip .2cm
\noindent {\bf Sign and index conventions.} 
Given a symplectic manifold $(M,\om)$ and a Hamiltonian $H_t: M \to \R$, we take Hamilton's equation to be $i_{X_{H_t}}\om=-dH_t$. A compatible almost complex structure $J$ is defined by the condition that $\om(\cdot,J\cdot)$ is a Riemannian metric. Throughout this work, the Conley-Zehnder index $\mu$ is normalized so that when $Q$ is a small positive definite quadratic form the path $\Ga: [0,1] \to \Sp(2n)$ generated by $Q$ and given by $\Ga(t)=\exp(tJQ)$ has $\cz(\Ga)=n$. This convention is consistent with the one used in Sections 4 and 5 of \cite{GG}. We also take the canonical symplectic form on $\R^{2n}$ to be $\sum dq_i \wedge dp_i$. For degenerate paths, the Conley--Zehnder index $\mu$ is defined as the lower semi-continuous extension of the Conley--Zehnder index from the paths with non-degenerate endpoint. More precisely,
$$
\mu(\Gamma)=\liminf_{\tilde{\Gamma}\to\Gamma}\mu(\tilde{\Gamma}),
$$
where $\tilde{\Gamma}$ is a small perturbation of $\Gamma$ with non-degenerate endpoint.
\vskip .2cm

Next, let $A \in \Sp(2d)$ be a totally degenerate symplectic matrix, i.e., all eigenvalues of $A$ are equal to one. One can show that then $A=\exp(JQ)$, where $Q$ is a symmetric matrix with all eigenvalues zero. Examining Williamson's normal forms  (see, e.g.,  \cite[Appendix 6]{Ar}) it is easy to see that the quadratic form $Q$ can be symplectically decomposed into a sum of terms of four types:
\begin{itemize}
\item the identically zero quadratic form on $\R^{2d}$,
\item the quadratic form $Q_0 = p_1q_2 +p_2q_3+\cdots+p_{d-1}q_d$ in Darboux coordinates on $\R^{2d}$, where $d \geq 3$ is odd,
\item the quadratic forms $Q_\pm = \pm(Q_0 + p_{d}^2/2)$ on $\R^{2d}$ for any $d$, where $Q_0=0$ when $d=1$.
\end{itemize}
We define $b_\pm(A)$ as the number of $Q_\pm$ terms in this decomposition. These are linear symplectic invariants of $A$ although this is not obvious; cf.\ \cite{GG}. Furthermore, given a symplectic matrix $P$, let $V$ be the subspace whose complexification is the generalized eigenspace of the eigenvalue one. Define $b_\pm(P)=b_\pm(P|_V)$. For a closed Reeb orbit $\ga$, we set $b_\pm(\ga)=b_\pm(P_\ga)$, where $P_\ga$ is the linearized Poincar\'e return map of $\ga$.

Throughout the paper, we will always consider the sphere endowed with the standard contact structure. Let $G$ be a group acting on $S^{2n+1}$ and let $\alpha$ be a $G$-invariant contact form. A closed Reeb orbit $\ga$ of $\alpha$ is called symmetric if $g(\ga(\R))=\ga(\R)$ for every $g \in G$. Given a $G$-invariant contact form, write $\P = \P_s \cup \P_{ns}$ where $\P_s$ (resp.\ $\P_{ns}$) is the set of symmetric (resp.\ non-symmetric) simple closed orbits.

\begin{definition}
\label{def:SDC}
A contact form $\alpha$ is \emph{strongly dynamically convex} if
\[
\cz(\ga) \geq n+2
\]
for every $\ga \in \P$, and
\[
\cz(\ga_s) + b_-(\ga_s) - b_+(\ga_s) \geq n+2
\]
for every $\ga_s \in \P_s$. 
\end{definition}

Note that this notion is natural from the point of view of contact topology since it is invariant under contactomorphisms in the following sense. Let  $\vr: S^{2n+1} \to S^{2n+1}$ be a contactomorphism and consider a $G$-action on $S^{2n+1}$. If $\alpha$ is a $G$-invariant strongly dynamically convex contact form then $\vr^*\alpha$ is strongly dynamically convex under the conjugated $G$-action $\vr^{-1}g\vr$.

\subsection{Main results}

Our first result establishes that every $G$-invariant contact form on $S^{2n+1}$ satisfying a condition slightly weaker than strong dynamical convexity carries at least $n+1$ simple periodic orbits. (Note the difference between the lower bound $n+1$ in the theorem below and the lower bound $n+2$ in Definition \ref{def:SDC}.) More precisely, let $\P^+ \subset \P$ be the set of simple periodic orbits with positive mean index. Write $\P^+ = \P^+_s \cup \P^+_{ns}$ where $\P^+_s$ (resp. $\P^+_{ns}$) is the set of symmetric (resp.\ non-symmetric) closed orbits with positive mean index.

\begin{theorem}
\label{thm:multiplicity}
Let $G$ be a group acting on $S^{2n+1}$ and $\alpha$ a $G$-invariant contact form. Suppose that $\alpha$ is strongly dynamically convex or more generally
\[
\cz(\ga) \geq n+1
\]
for every $\ga \in \P^+$ and
\[
\cz(\ga_s) + b_-(\ga_s) - b_+(\ga_s) \geq n+1
\]
for every $\ga_s \in \P^+_s$. Then $\#\P^+ \geq n+1$. 
\end{theorem}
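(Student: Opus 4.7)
The strategy is to argue by contradiction. Assume that $\#\P^+ = r \leq n$ and derive a contradiction by combining the common Index Recurrence Theorem (IRT, cf.\ Theorem \ref{thm:IRT}) with the resonance coming from the positive $S^1$-equivariant symplectic homology $\SH^{+,S^1}$ of the ball $B^{2n+2} \subset \R^{2n+2}$. List the orbits in $\P^+$ as $\ga_1,\ldots,\ga_r$ with positive mean indices $\Delta(\ga_i) > 0$. Applying the IRT to this collection produces a large positive integer $N$ and iterations $k_1,\ldots,k_r$ such that, for every $i$,
\[
N - n + b_+(\ga_i^{k_i}) \;\leq\; \cz(\ga_i^{k_i}) \;\leq\; N + n - b_-(\ga_i^{k_i}),
\]
together with precise control of how $b_\pm$ behave under these chosen iterations.

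Next, I would use the index hypotheses to sharpen these bounds. For a non-symmetric orbit $\ga_i \in \P^+_{ns}$, iteration monotonicity of the Conley--Zehnder index combined with $\cz(\ga_i) \geq n+1$ and $\Delta(\ga_i) > 0$ pushes $\cz(\ga_i^{k_i})$ into the upper portion of the IRT window. For a symmetric orbit $\ga_s \in \P^+_s$, the strengthened assumption $\cz(\ga_s) + b_-(\ga_s) - b_+(\ga_s) \geq n+1$ plays the analogous role for the shifted quantity $\cz + b_- - b_+$, and translates, via the iteration formulas, into a corresponding localization of $\cz(\ga_s^{k_s})$. In both cases, the upshot is that each iterated orbit $\ga_i^{k_i}$ can contribute to $\SH^{+,S^1}$ only in a narrow and explicit set of degrees near $N$.

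The contradiction then follows from a rank count. Since $\SH^{+,S^1}_\ast(B^{2n+2}) \iso \Q$ in every sufficiently high even degree and vanishes in every sufficiently high odd degree, a window of $2n+1$ consecutive degrees around $N$ has total rank $n+1$. On the other hand, the $r \leq n$ iterated orbits $\ga_i^{k_i}$, under the localization established above, can together contribute at most $r$ independent classes to this window, yielding the desired contradiction.

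The principal obstacle will be to coordinate three pieces: the IRT window with its $b_\pm$-corrections, the non-monotone behavior of $b_\pm$ under iteration, and the precise contribution of a possibly degenerate symmetric orbit to $\SH^{+,S^1}$. The strong dynamical convexity condition of Definition \ref{def:SDC} is calibrated exactly to this matching, which is why the symmetric case requires the extra correction $b_- - b_+$ rather than just the naive index bound. A secondary issue is the handling of degenerate orbits, whose Conley--Zehnder index is defined by lower semicontinuity; these should be reducible to the non-degenerate case by a small perturbation argument, provided the perturbation can be arranged to respect the symmetry on iterates near $N$.
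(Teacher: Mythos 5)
Your high-level strategy (IRT plus a localization-of-indices argument plus a pigeonhole count against the resonance coming from $\SH^{+,S^1}$ of the ball) is the same one the paper uses, via the carrier map $\psi:\N\to\P\times\N$ from \cite[Corollary 3.9]{GG}. But there is a genuine gap in the key counting step, and without it the argument only recovers the weaker bound $\lceil n/2\rceil+1$ of \cite{GG}.

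The gap: you assert that the $r\leq n$ iterated orbits $\ga_i^{k_i}$ ``can together contribute at most $r$ independent classes'' to a window of rank $n+1$. This is false for a general dynamically convex orbit. Under the standard IRT localization and the carrier map, each simple orbit $\ga_i$ can occupy up to \emph{two} slots, namely $j(s)\in\{k_i-1,k_i\}$, because the upper bound $\cz(\ga_i^{k_i-1})+\nu(\ga_i^{k_i-1}) < d - 2s + n + 2$ only follows with the extra correction $b_--b_+$. That is precisely the difference between ordinary dynamical convexity, which gives $\lceil n/2\rceil+1$ in \cite{GG}, and the statement you are trying to prove. What makes $n+1$ possible here is a case split the proposal never performs. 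For a \emph{symmetric} $\ga_s\in\P^+_s$, the hypothesis $\cz(\ga_s)+b_-(\ga_s)-b_+(\ga_s)\geq n+1$, fed into item~(iii) of Theorem~\ref{thm:IRT}, pins $j(s)$ down to exactly $k_{i(s)}$, so each symmetric orbit takes \emph{one} slot. For a \emph{non-symmetric} $\ga\in\P^+_{ns}$ you only get $j(s)\in\{k_{i(s)}-1,k_{i(s)}\}$ (two slots), but this is compensated by the elementary observation that non-symmetric orbits come in $G$-orbits of size at least $2$, so $\#\P^+_{ns}\geq 2r_2$ where $r_2$ is the number of such $G$-orbits. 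The final count is therefore $n+1 \leq r_1+2r_2 \leq \#\P^+_s+\#\P^+_{ns}=\#\P^+$, not ``$n+1\leq r$.'' Your proposed count with a flat bound of one class per orbit would already prove the full conjecture with no symmetry assumption at all, which should be a warning sign.

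Two further points that the proposal glosses over and that are essential to make the count close. First, to pin down $j(s)$ for non-symmetric orbits and to use the hypothesis on \emph{symmetric} iterated orbits consistently, the paper applies the IRT simultaneously to $\ga_1,\dots,\ga_{r_1+r_2}$ \emph{and} to the second iterates $\ga_{r_1+1}^2,\dots,\ga_{r_1+r_2}^2$ of the non-symmetric ones, and proves the synchronization $k_i=2k_{i+r_2}$; without this the two-slot window $\{k_i-1,k_i\}$ does not come out. Second, your suggestion to dispose of degeneracy by perturbation is not how the paper proceeds, and would be delicate here: the carrier map and the lower semicontinuous Conley--Zehnder index are used \emph{directly} for degenerate orbits, and a generic perturbation would destroy exactly the degeneracies ($b_\pm\ne 0$) that the hypothesis $\cz+b_--b_+\geq n+1$ is designed to exploit and that the later counterexample (Theorem~\ref{thm:example}) depends on.
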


\begin{remark}
In fact, the result we prove is more general than stated. We do not really need the action of a group: it is enough to consider a \emph{subset} of $\Diff(S^{2n+1})$ that is not necessarily a subgroup. More precisely, given a subset $S \subset \Diff(S^{2n+1})$ we say that $\alpha$ is $S$-invariant if $\vr^*\alpha=\alpha$ for every $\vr \in S$. A closed orbit $\ga$ of $\alpha$ is symmetric if $\vr(\ga(\R))=\ga(\R)$ for every $\vr \in S$. Then the previous theorem holds for $S$-invariant contact forms; see Section \ref{sec:pf-multiplicity}. Moreover,  the definition of strong dynamical convexity clearly extends to this more general context. 
\end{remark}

In particular, when the action is trivial, we have the following consequence of the theorem.

\begin{corollary}
\label{cor:multiplicity-notsym}
Let $\alpha$ be a contact form on $S^{2n+1}$. Suppose that
\[
\cz(\ga) \geq n+1\quad \text{and}\quad \cz(\ga) + b_-(\ga) - b_+(\ga) \geq n+1
\]
for every $\ga \in \P^+$. Then $\#\P^+ \geq n+1$.
\end{corollary}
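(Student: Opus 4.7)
The plan is to derive the corollary as the trivial-group specialization of Theorem \ref{thm:multiplicity}. Concretely, I would take $G=\{e\}$ acting on $S^{2n+1}$ (or, in the language of the remark following the theorem, take $S\subset\Diff(S^{2n+1})$ to be empty or the singleton containing the identity). Any contact form $\alpha$ on $S^{2n+1}$ is trivially $G$-invariant, so the hypothesis of Theorem \ref{thm:multiplicity} on $\alpha$ is automatically met.

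Next I would unravel the symmetry condition under this choice of $G$. A closed Reeb orbit $\ga$ is symmetric if $g(\ga(\R))=\ga(\R)$ for every $g\in G$, a condition that is vacuously satisfied when $G=\{e\}$. Therefore $\P=\P_s$ and $\P_{ns}=\emptyset$, and in particular $\P^+=\P^+_s$. Consequently the two index inequalities appearing in Theorem \ref{thm:multiplicity},
\[
\cz(\ga)\ge n+1\quad\text{for every }\ga\in\P^+,\qquad \cz(\ga_s)+b_-(\ga_s)-b_+(\ga_s)\ge n+1\quad\text{for every }\ga_s\in\P^+_s,
\]
reduce to precisely the two hypotheses assumed in the corollary, with both applied to every orbit in $\P^+$.

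Theorem \ref{thm:multiplicity} then yields $\#\P^+\ge n+1$, which is the desired conclusion. There is no genuine obstacle here; the corollary is a direct specialization whose entire content is packed into Theorem \ref{thm:multiplicity}. The only minor point to verify is that the more general $S$-invariant formulation described in the remark does include the empty symmetry case, so that applying the theorem to an arbitrary contact form (with no symmetry imposed) is legitimate.
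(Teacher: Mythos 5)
Your proposal is correct and is exactly the paper's argument: the corollary is stated as the specialization of Theorem \ref{thm:multiplicity} to the trivial action, under which every orbit is symmetric, $\P^+=\P^+_s$, and both index hypotheses apply to all of $\P^+$. Nothing further is needed.
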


Clearly, $b_\pm(P)=0$ for every symplectic matrix $P$ such that the eigenvalue one is semisimple. Therefore, we obtain the following corollary.

\begin{corollary}
\label{cor:semisimple}
Let $\alpha$ be a dynamically convex contact form on $S^{2n+1}$. Suppose that the eigenvalue one of the linearized Poincar\'e map of every degenerate simple closed orbit of $\alpha$ is semisimple. Then $\#\P \geq n+1$. In particular, this lower bound holds if every simple closed orbit of $\alpha$ is non-degenerate.
\end{corollary}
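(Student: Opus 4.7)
The plan is to derive Corollary \ref{cor:semisimple} as an immediate consequence of Corollary \ref{cor:multiplicity-notsym} by verifying that the semisimplicity hypothesis implies $b_\pm(\ga)=0$ for every simple closed Reeb orbit, so that the index condition $\cz(\ga)+b_-(\ga)-b_+(\ga)\geq n+1$ follows from dynamical convexity alone.

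First I would recall the setup: for any $\ga\in\P$, the invariants $b_\pm(\ga)$ depend only on the restriction $P_\ga|_V$ of the linearized Poincar\'e return map to the real subspace $V$ whose complexification is the generalized eigenspace of the eigenvalue $1$. If $\ga$ is non-degenerate, then $1$ is not an eigenvalue of $P_\ga$, so $V=0$ and trivially $b_\pm(\ga)=0$. If $\ga$ is degenerate, then by hypothesis the eigenvalue $1$ is semisimple, so the generalized eigenspace coincides with the actual eigenspace, which means $P_\ga|_V=\mathrm{Id}$. Writing $P_\ga|_V=\exp(JQ)$ for a symmetric $Q$ with all eigenvalues zero, this forces $Q\equiv 0$ on $V$. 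In the Williamson-type decomposition described in Section \ref{sec:sdc}, $Q$ then consists solely of the identically-zero summand on $\R^{2d}$, with no $Q_\pm$ blocks appearing; consequently $b_+(\ga)=b_-(\ga)=0$.

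Having established this, the rest is essentially bookkeeping. For any $\ga\in\P^+\subset\P$, dynamical convexity gives $\cz(\ga)\geq n+2\geq n+1$, and the previous paragraph gives $\cz(\ga)+b_-(\ga)-b_+(\ga)=\cz(\ga)\geq n+1$. Thus the hypotheses of Corollary \ref{cor:multiplicity-notsym} are satisfied, so $\#\P^+\geq n+1$, and since $\P^+\subset\P$, we conclude $\#\P\geq n+1$. The last sentence of the statement follows by observing that when every simple closed orbit is non-degenerate, the semisimplicity hypothesis is vacuous.

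The only substantive step is the claim $b_\pm(P)=0$ for symplectic $P$ with semisimple eigenvalue $1$; this is asserted as evident in the paragraph preceding the corollary, and the justification above (that $Q\equiv 0$ on $V$ contributes only the zero summand to the Williamson decomposition) makes it rigorous. No serious obstacle is anticipated — the corollary is a direct specialization of the preceding result.
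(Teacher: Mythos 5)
Your proof is correct and follows the same route as the paper: the paper simply observes that $b_\pm(P)=0$ whenever the eigenvalue one of $P$ is semisimple (or absent) and then applies Corollary \ref{cor:multiplicity-notsym}, which is exactly what you do. Your paragraph justifying $b_\pm(\ga)=0$ via $P_\ga|_V=\mathrm{Id}\Rightarrow Q=0$ is a harmless expansion of what the paper asserts as ``clear.''
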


Unfortunately, when the action is trivial, it is probably not true that every convex contact form is strongly dynamically convex. However, for some natural non-trivial actions, convex invariant contact forms are strongly dynamically convex. More precisely, we have the following result.

\begin{theorem}
\label{thm:c=>sdc1}
Let $\alpha$ be a convex contact form on $S^{2n+1}$ invariant under the antipodal map. Then $\alpha$ is strongly dynamically convex.
\end{theorem}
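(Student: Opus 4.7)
My plan for proving Theorem \ref{thm:c=>sdc1} is as follows. The first inequality in the definition of strong dynamical convexity, $\mu(\gamma)\geq n+2$ for every simple closed Reeb orbit $\gamma$, is the classical statement that convex contact forms on the standard contact sphere are dynamically convex, due to Hofer--Wysocki--Zehnder in dimension three and extended to higher dimensions; I would invoke this as known.

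The bulk of the work is the second inequality, $\mu(\gamma_s) + b_-(\gamma_s) - b_+(\gamma_s)\geq n+2$ for every symmetric simple closed orbit $\gamma_s$. My plan proceeds in three steps. First, for such a $\gamma_s$ of period $T$, the antipodal invariance of $\alpha$ and the commutativity of the Reeb flow with $-\Id$ give $\gamma_s(t+T/2)=-\gamma_s(t)$, exhibiting $\gamma_s$ as the double cover, under the antipodal quotient, of a closed orbit of period $T/2$ in $\RP^{2n+1}$. I would then choose a symplectic trivialization of $\xi|_{\gamma_s}$ equivariant with respect to the antipodal involution, so that the linearized flow becomes a path $\Psi:[0,T]\to\Sp(2n)$ satisfying $\Psi(t+T/2)=\Psi(t)\cdot M$ with $M:=\Psi(T/2)$, and hence $P_{\gamma_s}=M^2$. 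Let $\mu_{1/2}(\gamma_s):=\mu(\Psi|_{[0,T/2]})$ denote the resulting Maslov-type ``half-index''.

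Second, I would apply Long's iteration formula to the factorization $P_{\gamma_s}=M^2$ to obtain a relation
$$\mu(\gamma_s) = 2\,\mu_{1/2}(\gamma_s) + C(M),$$
where $C(M)$ is a correction determined entirely by the eigenstructure of $M$. Eigenvalues of $M$ away from $\pm 1$ contribute symmetrically under squaring and cancel from $C$; the eigenvalue $+1$ of $M$ squares to $+1$ of $P_{\gamma_s}$ and, via a Williamson normal form computation, also contributes zero. The essential contribution comes from the generalized eigenspace of $M$ at $-1$, which under squaring yields part of the generalized eigenspace of $P_{\gamma_s}$ at $+1$. A careful Williamson analysis should show that this contribution equals exactly $b_+(\gamma_s)-b_-(\gamma_s)$, yielding
$$\mu(\gamma_s) + b_-(\gamma_s) - b_+(\gamma_s) \;=\; 2\,\mu_{1/2}(\gamma_s).$$
Third, I would invoke a ``half-dynamical-convexity'' bound: convexity of $\Sigma_\alpha$ together with its antipodal symmetry should yield $2\,\mu_{1/2}(\gamma_s)\geq n+2$. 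The argument follows the Hofer--Wysocki--Zehnder strategy of using positive-definiteness of the Hessian of the defining Hamiltonian to drive the index upward, now adapted to the half-trajectory from $x_0$ to $-x_0$; it is the antipodal boundary condition that permits the factor-of-two improvement over the usual dynamical convexity bound. Combining the two displayed relations then gives the theorem.

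The hard part will be Step 2: verifying that the iteration correction $C(M)$ equals precisely $b_+(\gamma_s)-b_-(\gamma_s)$. This requires delicate spectral bookkeeping through Williamson's normal form, tracking how each $-1$-eigenvalue Jordan block of $M$ produces a $Q_\pm$-block of $P_{\gamma_s}$ at $+1$ upon squaring, with sign determined by the orientation of the restricted symplectic form. Step 3 is subtler than the standard dynamical convexity argument, since $\Psi|_{[0,T/2]}$ is a half-path rather than a loop of symplectic matrices, so the convexity input must be phrased in a form that controls the Maslov-type index of paths joining $I$ to a matrix $M$ with $M^2$ close to $P_{\gamma_s}$.
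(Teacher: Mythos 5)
Your Step 1 factorization idea, and the general strategy of comparing the full-period index to the half-period index, are indeed the right instincts and are close to what the paper does. But there are two genuine gaps.

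The first and more serious gap is the claimed exact identity in Step 2,
$\mu(\gamma_s) + b_-(\gamma_s) - b_+(\gamma_s) = 2\,\mu_{1/2}(\gamma_s)$.
This is false. Write $\bar\Psi = \Psi|_{[0,T/2]}$, $M = \bar\Psi(T/2)$, and let $\Bott$ be the Bott function of $\bar\Psi$. Then $\mu(\gamma_s) = \Bott(1) + \Bott(-1)$ and $\mu_{1/2}(\gamma_s) = \Bott(1)$, so you need $\Bott(-1) - \Bott(1) = b_+(\gamma_s) - b_-(\gamma_s)$. But by the jump formula for Bott's function, $\Bott(-1) - \Bott(1) = S^+_1(M) + \sum_{\theta\in(0,\pi)}\bigl(S^+_{e^{\sqrt{-1}\theta}}(M) - S^-_{e^{\sqrt{-1}\theta}}(M)\bigr) - S^-_{-1}(M)$, which picks up splitting-number contributions from \emph{every} eigenvalue of $M$ on the upper half of the unit circle. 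The assertion that eigenvalues of $M$ away from $\pm 1$ ``cancel from $C(M)$'' is simply not true; those contributions survive, and there is no identity, only an inequality. The correct statement is Proposition \ref{prop:sdc_2nd_iterate} (equivalently \cite[Lemma 4.1]{LLZ}): if $\mu(\bar\Ga) \geq n+1$ then $\mu(\bar\Ga^2) + 2S^+_1 - \nu \geq n+1$, and its proof uses both $\Bott(1)\geq n+1$ and the bound $\sum_{\theta\in(0,\pi)} S^-_{e^{\sqrt{-1}\theta}} + (\nu_1 - S^+_1) + (\nu_{-1} - S^+_{-1}) \leq n+1$ to absorb the extra terms. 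You also still need Lemma \ref{lemma:linear}, $b_-(P) - b_+(P) = 2S^+_1(P) - \nu(P)$, to translate the $b_\pm$ combination into splitting-number/nullity language; that step is missing from your plan.

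The second gap is in Step 1. The $\Z_2$-equivariant trivialization of $\xi|_{\gamma_s}$ that you use to realize the return map as $M^2$ does not exist in general: the induced contact structure on $\RP^{2n+1}$ has nonvanishing first Chern class when $n$ is even, which obstructs such a trivialization (the paper itself makes this observation in the discussion preceding Theorem \ref{thm:c=>sdc2}). The paper avoids this entirely by passing to the homogeneous degree-two Hamiltonian $H$ on $\R^{2n+2}$ and the constant trivialization of $T\R^{2n+2}$; there the antipodal symmetry of $H$ makes $d^2H(\gamma(t))$ literally $T/2$-periodic, so the linearized flow over the full period is the square of the linearized flow over the half period with no trivialization issue (Proposition \ref{prop:2nd_iterate}). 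The convexity input is then the standard bound $\mu(\bar\Ga)\geq n+1$ for the Hamiltonian half-path (a path in $\Sp(2n+2)$, not $\Sp(2n)$), and one returns to the Reeb index via $\mu(\gamma) = \mu(\Ga) + 1$ and $b_\pm(\gamma) = b_\pm(\Ga(T))$ (Proposition \ref{prop:ReebHam}). Your Step 3 bound $2\mu_{1/2}\geq n+2$ in the contact picture does not fall out cleanly from convexity precisely because the dimension count is off by the missing $\xi^\omega$-direction; working with the Hamiltonian picture as the paper does is what makes the numerology come out right.
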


This theorem is proved in Section \ref{sec:c=>sdc}. As a consequence, we recover the aforementioned result from \cite{LLZ}:

\begin{corollary}
Every convex contact form on $S^{2n+1}$ invariant under the antipodal map has at least $n+1$ simple closed orbits.
\end{corollary}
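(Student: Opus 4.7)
The plan is a one-line application of the two previous theorems. By Theorem \ref{thm:c=>sdc1}, the convex, antipodally invariant contact form $\alpha$ is strongly dynamically convex. Taking $G=\Z/2$ acting by the antipodal map, Theorem \ref{thm:multiplicity} then yields $\#\P^+\geq n+1$, and since $\P^+\subset\P$ we conclude $\#\P\geq n+1$.

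The only small point to verify is that strong dynamical convexity does force every simple closed Reeb orbit to have positive mean index, so that the lower bound on $\#\P^+$ genuinely gives a lower bound on $\#\P$. This is standard: the mean index $\hmu(\ga)$ and the Conley--Zehnder index $\cz(\ga)$ satisfy $|\hmu(\ga)-\cz(\ga)|\leq n$ (with $\cz$ defined via the lower semicontinuous extension as in the sign conventions). Since strong dynamical convexity forces $\cz(\ga)\geq n+2$ for every $\ga\in\P$, we obtain $\hmu(\ga)\geq 2>0$, so $\P^+=\P$.

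Thus the proof is simply the composition
\[
\text{convex + antipodal invariance} \;\xRightarrow{\text{Thm.~\ref{thm:c=>sdc1}}}\; \text{strong dyn.\ convexity} \;\xRightarrow{\text{Thm.~\ref{thm:multiplicity}}}\; \#\P\geq n+1.
\]
There is no real obstacle here: all the content has been placed in the two main theorems invoked, and the corollary exists only to record the recovery of the Liu--Long--Zhu theorem \cite{LLZ} from the new framework.
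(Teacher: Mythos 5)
Your proof is correct and follows the same route as the paper: the corollary is exactly the composition of Theorem~\ref{thm:c=>sdc1} (convexity plus antipodal invariance gives strong dynamical convexity) with Theorem~\ref{thm:multiplicity}. The extra verification in your second paragraph that $\P^+=\P$ is unnecessary, since the inclusion $\P^+\subset\P$ already gives $\#\P\geq\#\P^+\geq n+1$, as you had in fact already observed in the first paragraph.
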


Note that there is also a similar multiplicity question in the setting where the involution is anti-symplectic rather than symplectic, which is closely related to the  Seifert conjecture on brake orbits; see, e.g., \cite{KKK} for further references and a Floer theoretic approach to the problem.

As noticed before, a convex contact form on $S^{2n+1}$ is dynamically convex. An important question in contact topology is whether every dynamically convex contact form is contactomorphic to a convex one.  Our next result, proved in Sections \ref{sec:proof main result} and \ref{sec:technical},  shows that this is not true if the underlying contactomorphism is required to commute with the antipodal map.

\begin{theorem}
\label{thm:example}
For $n\geq 2$, there exists an antipodally symmetric contact form $\alpha$ on $S^{2n+1}$ which is dynamically convex but not strongly dynamically convex and thus not contactomorphic to a convex contact form via a contactomorphism commuting with the antipodal map.
\end{theorem}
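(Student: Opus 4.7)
The plan is to construct $\alpha$ as a carefully controlled antipodally-equivariant perturbation of the round contact form $\alpha_0$ on $S^{2n+1}$, localized near a single Hopf fiber. For $\alpha_0$ every Reeb orbit is a Hopf fiber, each fiber is antipodally invariant (since $-z$ lies on the Hopf orbit through $z$), the primitive CZ index equals exactly $n+2$ via the lower semi-continuous extension, and the linearized Poincar\'e return map is the identity on $\C^n$. Hence $\alpha_0$ is marginally dynamically convex, with $b_\pm(\gamma)=0$ for every primitive Hopf fiber $\gamma$, so that strong dynamical convexity holds with equality---leaving just enough room to violate the symmetric condition by a small, well-chosen deformation.

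I would fix a Hopf fiber $\gamma_s$, pick an antipodally-invariant Weinstein tubular neighborhood $U\cong S^1\times B^{2n}$ of $\gamma_s$ (with the antipodal map acting as a half-translation on the $S^1$-factor), and modify $\alpha_0$ inside $U$ to an equivariant contact form $\alpha$ such that: (i) $\gamma_s$ remains a closed Reeb orbit of the same period; (ii) its linearized return map $P_{\gamma_s}$ has Williamson normal form on the generalized eigenspace of $1$ equal to the identically zero quadratic form on an $\R^{2n-2}$ summand together with a single $Q_+$ block on the transverse $\R^2$, yielding $b_+(\gamma_s)=1$ and $b_-(\gamma_s)=0$; and (iii) $\cz(\gamma_s)$, computed via the lower semi-continuous extension, is still $n+2$. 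The asymmetry between $Q_+$ and $Q_-$ in Definition~\ref{def:SDC} is tailored to exactly this situation: introducing a $Q_+$ block does not drop the liminf defining $\cz$, whereas a $Q_-$ block would. An explicit Reeb vector field on $U$ with the prescribed parabolic block is routine to write down in Darboux-type coordinates; the Weinstein contact neighborhood theorem then realizes it from a contact form, and averaging under $z\mapsto-z$ restores antipodal symmetry.

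Next I would verify global dynamical convexity. Outside $U$, $\alpha$ coincides with $\alpha_0$, so any orbit that exits $U$ is $C^1$-close to a Hopf fiber and retains $\cz\geq n+2$. The iterates $\gamma_s^m$ with $m\geq 2$ satisfy $\cz(\gamma_s^m)\geq n+2$ because the unperturbed value $2m(n+1)-n$ exceeds $n+2$ by at least $2n+2$, whereas the parabolic perturbation shifts the index by a bounded amount. The delicate case is that of additional short orbits created inside $U$ by breaking the highly non-isolated Hopf foliation. These can be controlled by arranging the perturbation to be a positive bump in the $Q_+$-direction, forcing every new short orbit's linearized return map to be a small, essentially positive-definite perturbation of the identity and hence to have $\cz\geq n+2$. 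This global control is the content of Section~\ref{sec:technical}.

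With $\alpha$ so constructed,
\[
\cz(\gamma_s)+b_-(\gamma_s)-b_+(\gamma_s)=(n+2)+0-1=n+1<n+2,
\]
so $\alpha$ is dynamically convex but not strongly dynamically convex. If some contactomorphism $\varphi$ commuting with the antipodal map carried $\alpha$ to a convex form $\beta$, then $\beta$ would be convex and antipodally symmetric, hence strongly dynamically convex by Theorem~\ref{thm:c=>sdc1}; by the equivariance remark following Definition~\ref{def:SDC}, strong dynamical convexity would transfer back to $\alpha$, a contradiction. The principal obstacle is the global control step of the previous paragraph: the Hopf foliation is non-isolated, so even $C^2$-small perturbations supported in $U$ can spawn many new short orbits, each of whose CZ indices must be verified to stay $\geq n+2$; engineering a perturbation for which this can be checked uniformly, while simultaneously realizing the prescribed Williamson form for $P_{\gamma_s}$ and preserving antipodal symmetry, is the technical core of the argument.
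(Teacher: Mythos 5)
Your proposal correctly identifies the mechanism --- engineer a symmetric orbit $\gamma_s$ with $\cz(\gamma_s)+b_-(\gamma_s)-b_+(\gamma_s)<n+2$ while keeping $\cz\geq n+2$ globally, then invoke Theorem~\ref{thm:c=>sdc1} --- but the route you sketch is genuinely different from the paper's, and the part you flag as ``the technical core'' is exactly where the proposal has a gap that the paper resolves by a quite non-obvious two-stage device.

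You propose to perturb the round contact form directly near one Hopf fiber so that its return map acquires a single $Q_+$ block while $\cz$ stays at $n+2$, and you claim global dynamical convexity can be salvaged because the perturbation is ``a positive bump in the $Q_+$-direction.'' This is where the argument breaks. The round form is maximally degenerate: a whole $2n$-parameter family of closed Reeb orbits foliates the sphere. Any $C^2$-small local modification near a fiber destroys that foliation and spawns families of short orbits, and there is no a priori reason these inherit $\cz\geq n+2$; in fact positivity of the bump controls the \emph{endpoint} but says nothing about the intermediate path, and the lower semi-continuous $\cz$ is a path invariant. Likewise the statement ``introducing a $Q_+$ block does not drop the liminf defining $\cz$, whereas a $Q_-$ block would'' is a heuristic about endpoints, not a theorem about paths, and is not enough to conclude $\cz(\gamma_s)=n+2$ after the perturbation. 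These are not minor technicalities to be deferred to ``Section~\ref{sec:technical}''; they are the whole substance of the construction.

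The paper's approach is structurally different and specifically designed around this obstruction. It realizes $S^{2n+1}$ as a prequantization bundle over $\CP^n$, writes $\alpha=\beta/\hat H$ with $H=f\circ G$ where $G=-(F(z_1)+F(z_2)^2+\ep\sum_{i\geq 3}F(z_i))$, and \emph{first} produces a form that is dynamically convex everywhere except at one isolated fiber $\gamma_0$, which it arranges to have $\cz(\gamma_0)=n$, \emph{below} the dynamical convexity threshold, together with an explicit degenerate ($\Id$ on $\R^2$) direction coming from the $F(z_2)^2$ term. Index positivity (Lemma~\ref{lemma:index+}) and a careful Bott-function computation (Lemma~\ref{lemma:dynconvex}) control every other orbit, including the long ones. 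Only then does it apply a localized time-dependent shear $H^{\ep'}_t$ supported near $\gamma_0$; because $\gamma_0$ is isolated among short orbits (Lemma~\ref{lemma:no new orbit}), this shear creates no new short orbits, raises $\cz(\gamma_0^{\ep'})$ to $n+2$, and produces $b_+=2$, $b_-=0$ (Lemma~\ref{lemma:gamma0}), so the symmetric-index quantity drops to $n$. Your single-stage scheme, by contrast, tries to realize the $Q_+$ block and $\cz=n+2$ simultaneously, starting from the worst possible place (a non-isolated orbit in a degenerate foliation), precisely the situation the paper's indirection is engineered to avoid. The final contradiction step you give --- using the contactomorphism-invariance of strong dynamical convexity together with Theorem~\ref{thm:c=>sdc1} --- does match the paper.
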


\begin{remark}
\label{rmk:C1 close}
The contact form $\alpha$ can be chosen arbitrarily $C^1$-close to the Liouville form restricted to the round sphere in $\R^{2n+2}$; see Remark \ref{rmk:H C1 close to 1}.
\end{remark}

The fact that the form $\alpha$ in the theorem is not equivalent to a convex form via an equivariant contactormorphism is a consequence of the invariance of strong dynamical convexity under contactomorphisms. Indeed, from the invariance, we see that if $\vr: S^{2n+1} \to S^{2n+1}$ is a contactomorphism then $\vr^*\alpha$ is strongly dynamically convex under the conjugated $\Z_2$-action. If $\vr$ commutes with the antipodal map then the conjugated action is also generated by the antipodal map and consequently $\vr^*\alpha$ cannot be convex by Theorem \ref{thm:c=>sdc1}.

At this stage we do not know if the requirement that the contactomorphism commutes with the antipodal map is essential. However, when $n$ is odd, we can improve the previous theorem furnishing a condition specifying a set of contactomorphisms with \emph{non-empty $C^1$-interior}. More precisely, we have the following result, proved in Section \ref{sec:example open}.

\begin{theorem}
\label{thm:example open}
If $n\geq 3$ is an odd integer, the contact form $\alpha$ furnished by Theorem \ref{thm:example} satisfies the following. Let $S \subset \Cont(S^{2n+1})$ be the subset of contactomorphisms that commute with the antipodal map. Then there exists a $C^1$-open subset $U \subset \Cont(S^{2n+1})$, whose closure (in the $C^1$-topology) contains $S$, such that $\vr^*\alpha$ cannot be convex for any $\vr$ in $\widebar U$.
\end{theorem}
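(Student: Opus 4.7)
\medskip

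The plan is to upgrade Theorem \ref{thm:example} via the extension Theorem \ref{thm:c=>sdc2}, whose point is precisely to replace exact antipodal symmetry in Theorem \ref{thm:c=>sdc1} by a $C^1$-open condition on the symmetry involution, available when $n$ is odd. Throughout, let $A\colon S^{2n+1}\to S^{2n+1}$ denote the antipodal map and let $\alpha$ be the contact form furnished by Theorem \ref{thm:example}. For $\vr \in \Cont(S^{2n+1})$, form the conjugated involution $\iota_\vr := \vr^{-1}\circ A\circ \vr$. Since $\alpha$ is $A$-invariant, the pullback $\vr^*\alpha$ is $\iota_\vr$-invariant, and $\vr$ induces a bijection between closed Reeb orbits of $\alpha$ and those of $\vr^*\alpha$ preserving the Conley--Zehnder index, the invariants $b_\pm$, and the property of being symmetric (an orbit of $\alpha$ is $A$-symmetric iff the corresponding orbit of $\vr^*\alpha$ is $\iota_\vr$-symmetric). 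In particular, strong dynamical convexity of $\vr^*\alpha$ with respect to the $\iota_\vr$-action is equivalent to strong dynamical convexity of $\alpha$ with respect to the $A$-action.

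Next, I would define $U \subset \Cont(S^{2n+1})$ as the set of contactomorphisms $\vr$ such that $\iota_\vr$ satisfies the hypothesis of Theorem \ref{thm:c=>sdc2}. By design, that theorem asserts that convex $\iota$-invariant contact forms are strongly dynamically convex (with respect to the $\iota$-action) whenever $\iota$ satisfies a $C^1$-open condition which, for $n$ odd, is fulfilled by the antipodal map. Because $\vr\mapsto \iota_\vr$ is continuous in the $C^1$-topology, $U$ is $C^1$-open. For any $\vr_0 \in S$, one has $\iota_{\vr_0}=A$, and small $C^1$-perturbations $\vr$ of $\vr_0$ produce $\iota_\vr$ arbitrarily $C^1$-close to $A$; these perturbations therefore lie in $U$, so $\vr_0 \in \overline U$, giving the required inclusion $S\subset \overline U$.

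Now suppose, for contradiction, that $\vr \in U$ and $\vr^*\alpha$ is convex. Applying Theorem \ref{thm:c=>sdc2} to the convex $\iota_\vr$-invariant form $\vr^*\alpha$ yields strong dynamical convexity of $\vr^*\alpha$ with respect to $\iota_\vr$. By the invariance observed in Step 1 (applied to the contactomorphism $\vr^{-1}$), this is equivalent to $\alpha$ being strongly dynamically convex with respect to the $A$-action, contradicting the defining property of $\alpha$ from Theorem \ref{thm:example}. Hence $\vr^*\alpha$ is not convex for any $\vr\in U$. To extend the conclusion to $\overline U$, suppose $\vr \in \overline U$ and $\vr^*\alpha$ is convex. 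Since strict convexity of a starshaped hypersurface is stable under sufficiently small $C^2$-perturbations of the hypersurface, one can find $\vr'\in U$ arbitrarily $C^1$-close to $\vr$ with $(\vr')^*\alpha$ still bounding a strictly convex body (perturbing $\vr$ by an arbitrarily small smoothing contained in a $C^1$-neighborhood); this returns us to the situation already excluded, completing the argument. Alternatively, if the hypothesis of Theorem \ref{thm:c=>sdc2} is already closed under $C^1$-limits in a suitable sense, one applies it directly to $\iota_\vr$ for $\vr \in \overline U$.

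The decisive and most delicate ingredient is Theorem \ref{thm:c=>sdc2} itself: one must identify an appropriate $C^1$-open family of involutions on $S^{2n+1}$, larger than the single-element family $\{A\}$ (and accumulating onto it), for which convexity still forces the refined index inequality defining strong dynamical convexity. This is precisely the place where the parity hypothesis that $n$ is odd should enter, as the equivariant index computations underlying Theorem \ref{thm:c=>sdc1} behave more rigidly in that case. Once Theorem \ref{thm:c=>sdc2} is established, the deduction of Theorem \ref{thm:example open} above is essentially formal, combining the contactomorphism-invariance of strong dynamical convexity with the openness and closure structure built into the definition of $U$.
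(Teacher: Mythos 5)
Your high-level strategy is right: you correctly identify Theorem~\ref{thm:c=>sdc2} as the engine, you correctly observe that strong dynamical convexity of $\vr^*\alpha$ with respect to the conjugated action is equivalent to strong dynamical convexity of $\alpha$ with respect to the antipodal action, and you correctly note that the odd parity of $n$ is what makes the antipodal map realizable as the half-time map of a Hamiltonian circle action with zero Maslov index. But the heart of the theorem --- the existence of a genuinely $C^1$-\emph{open} set $U$ with $S\subset \widebar U$ --- is not ``essentially formal'' once Theorem~\ref{thm:c=>sdc2} is in hand, and your argument for it has a real gap.

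The hypothesis of Theorem~\ref{thm:c=>sdc2} is the inequality $\Bott_\Phi(-1)\leq -(n+1)$, where $\Phi$ is the linearized half-circle-action. By property (g) of Section~\ref{sec:Bott}, $\Ga\mapsto \Bott_\Ga(-1)$ is \emph{lower} semicontinuous in the $C^0$-topology. A lower semicontinuous integer-valued quantity satisfying ``$\leq c$'' is a $C^0$-\emph{closed} condition, not an open one: near a path where equality $\Bott_\Phi(-1)=-(n+1)$ holds, perturbations can only push the value up, potentially breaking the inequality. This is exactly the situation at the antipodal action: the paper computes $\Bott_\Phi(-1)=-(n+1)$ there, and the endpoint $\Phi(1/2)=-\Id$ is totally degenerate at the eigenvalue $-1$, so perturbations are genuinely problematic. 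Your sentence ``Because $\vr\mapsto\iota_\vr$ is continuous in the $C^1$-topology, $U$ is $C^1$-open'' therefore does not follow, and the claim that small $C^1$-perturbations of $\vr_0\in S$ necessarily land in $U$ is unjustified. The paper resolves this with Lemma~\ref{lemma:conjugation}: it explicitly constructs a contactomorphism $\vr$ arbitrarily $C^1$-close to the identity whose conjugated action satisfies the stronger \emph{upper}-semicontinuous bound $\Bott^+_{\Phi'}(-1)\leq -(n+1)$ (achieved by perturbing so that $\Phi'(1/2)$ has no eigenvalue $-1$, whence $\Bott^+=\Bott$). Only then is the condition open, by upper semicontinuity of $\Bott^+$, and the openness-plus-closure structure you want falls out: one gets a $C^1$-open $V$ whose closure contains the identity, and then $U=\bigcup_{\bvr\in S}\bvr V_{\bvr}$.

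Your final step --- extending the conclusion from $U$ to $\widebar U$ via $C^2$-stability of strict convexity --- is also shaky: approximating $\vr\in\widebar U$ by $\vr'\in U$ in the $C^1$-topology gives $(\vr')^*\alpha$ only $C^1$-close to $\vr^*\alpha$, which is not enough to propagate strict convexity. The paper sidesteps this entirely by invoking lower semicontinuity of $\Bott_\Ga(-1)$ once more: the hypothesis of Theorem~\ref{thm:c=>sdc2} is a $C^0$-closed condition on $\Phi'$, so it automatically persists on $\widebar U$, and then Theorem~\ref{thm:c=>sdc2} applies directly. In short, the missing idea is the interplay between the lower semicontinuous Bott function $\Bott$ and its upper semicontinuous companion $\Bott^+$, together with the explicit perturbation of Lemma~\ref{lemma:conjugation} needed to detach from the degenerate antipodal endpoint.
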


\subsection{Organization of the paper and acknowledgments}\hspace*{\fill} \\

\noindent \textbf{Organization of the paper.}
The rest of the paper is organized as follows. The background on the Conley--Zehnder index theory necessary for this work is presented in Section \ref{sec:background}. In the same section, we prove a comparison result (Theorem \ref{thm:comparison}) that plays a major role in the proof of Theorem \ref{thm:example open}. In Section \ref{sec:pf-multiplicity}, we prove our multiplicity result, Theorem \ref{thm:multiplicity}. Theorem \ref{thm:c=>sdc1}, which asserts that antipodally symmetric convex contact forms are strongly dynamically convex, is proved in Section \ref{sec:c=>sdc}. Our main result, Theorem \ref{thm:example}, addressing the problem of the relation between convexity and dynamical convexity, is established in Sections \ref{sec:proof main result} and \ref{sec:technical}. Finally, Section \ref{sec:example open} is devoted to the proof of Theorem \ref{thm:example open}.

\vskip .2cm
\noindent \textbf{Acknowledgments.}
The second author is grateful to Miguel Abreu and Yiming Long for useful discussions.

\section{Basic background on index theory for symplectic paths}
\label{sec:background}

\subsection{Index recurrence theorem}
A crucial ingredient for distinguishing simple and iterated orbits in the proof of Theorem \ref{thm:multiplicity} is the following  combinatorial result, taken from \cite[Theorem 5.2]{GG}, addressing the index behavior under iterations. This result is essentially contained, although in a different form, in \cite{DLW,LZ} as the so-called common index jump theorem.

\begin{theorem}[\cite{GG}]
\label{thm:IRT}
Let $\Phi_i: [0,1] \to \Sp(2n)$, with $i \in \{1,\dots,r\}$, be a finite collection of symplectic paths with positive mean index, starting at the identity. Then for any $\eta>0$ and any $\ell_0\in\N$, there exist positive integers $d,k_1,\dots,k_r$ such that, for all $i$ and any integer $\ell \in \Z$ in the range $1\leq \ell\leq \ell_0$, we have
\begin{itemize}
\item[\rm{(i)}] $\big|\mi(\Phi^{k_{i}}_i)-d\big|<\eta$, where $\mi$ is the mean index,
\item[\rm{(ii)}] $\cz(\Phi^{k_{i}+\ell}_i)= d + \cz(\Phi^\ell_i)$,
\item[\rm{(iii)}] $\cz(\Phi^{k_{i}-\ell}_i)= d + \cz(\Phi^{-\ell}_i) + \big(b_+(\Phi^{\ell}_i(1))-b_-(\Phi^{\ell}_i(1))\big)$.
\end{itemize}
Furthermore, for any $N\in \N$ we can make all $d,\,k_1,\,\dots,\,k_r$ divisible by $N$.
\end{theorem}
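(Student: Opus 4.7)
The plan is to follow the common index jump theorem strategy of Long and Zhu \cite{LZ}, adapted as in \cite{GG}. First I would decompose each terminal matrix $\Phi_i(1)$ into a symplectic direct sum of basic normal forms: hyperbolic blocks, elliptic blocks with rotation angles $\theta_{i,j}\in(0,2\pi)$, and unipotent blocks at the eigenvalues $\pm 1$ (the latter including the Jordan structure responsible for the $b_\pm$ invariants). After such a trivialization the Conley--Zehnder index of any iterate $\Phi_i^k$ splits as a sum of block contributions, each admitting an explicit formula in $k$: hyperbolic blocks contribute a constant, an elliptic block with angle $\theta$ contributes $2\lfloor k\theta/(2\pi)\rfloor$ up to a Krein sign, and unipotent blocks contribute constants together with jumps dictated by the lower semi-continuous extension of $\mu$ used for degenerate paths. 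Summing yields $\mu(\Phi_i^k)=k\,\mi(\Phi_i)+E_i(k)$ with $E_i(k)$ bounded, its fluctuation controlled by the fractional parts $\{k\theta_{i,j}/(2\pi)\}$.

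The core step is then to choose $k_1,\dots,k_r$ and an integer $d$ satisfying three Diophantine conditions simultaneously: (a) $|k_i\,\mi(\Phi_i)-d|<\eta$ for all $i$; (b) for each $i,j$ the fractional part $\{k_i\theta_{i,j}/(2\pi)\}$ is smaller than $\min_{1\leq\ell\leq\ell_0}\{\ell\theta_{i,j}/(2\pi)\}$, which is strictly positive whenever $\theta_{i,j}/(2\pi)$ is irrational; and (c) $k_i\,\mi(\Phi_i)$ approaches $d$ from below. Since $\mi(\Phi_i)>0$, condition (c) has solutions with $k_i$ and $d$ arbitrarily large; since Kronecker's equidistribution theorem applied to the vector of all irrational $\theta_{i,j}/(2\pi)$ jointly with the irrational $\mi(\Phi_i)$ gives a dense orbit in the corresponding subtorus, conditions (a)--(b)--(c) can be solved together. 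Rational rotation angles and rational mean indices are handled by first passing to a common denominator, which also provides the divisibility by $N$ via the replacement $k_i\mapsto Nk_i$, $d\mapsto Nd$.

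With these conditions in hand, (i) is immediate. For (ii) and (iii) one verifies block by block that the smallness of $\{k_i\theta_{i,j}/(2\pi)\}$ compared with $\{\ell\theta_{i,j}/(2\pi)\}$ forces the identities $\lfloor(k_i+\ell)\theta_{i,j}/(2\pi)\rfloor=\lfloor k_i\theta_{i,j}/(2\pi)\rfloor+\lfloor\ell\theta_{i,j}/(2\pi)\rfloor$ and its analog for $k_i-\ell$. The asymmetry between (ii) and (iii), realized by the correction $b_+(\Phi_i^\ell(1))-b_-(\Phi_i^\ell(1))$, arises from the fact that degenerate eigenvalue-one blocks yield different limits of $\mu$ on the two sides of a non-transverse crossing, and the lower semi-continuous convention selects these limits asymmetrically. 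The hard part will be the careful case analysis across the various normal form blocks, especially the unipotent blocks with non-trivial Jordan structure, together with the bookkeeping needed to see that the $b_\pm$ invariants of $\Phi_i^\ell(1)$ -- defined via Williamson's decomposition of the generalized eigenspace at $1$ -- assemble precisely into the correction in (iii), and not merely into a formula of the right shape up to some uncontrolled constant.
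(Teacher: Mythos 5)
The paper does not prove this theorem: it is imported verbatim from \cite{GG} (Theorem 5.2), and the text explicitly attributes the result there and to the common index jump theorem of \cite{LZ,DLW}. So there is no ``paper's own proof'' to match against, and the right benchmark is the argument in those references.

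Your outline is the correct strategy and does match the cited sources in spirit: decompose the end matrix into normal-form blocks, obtain a block-by-block iteration formula $\mu(\Phi_i^k)=k\,\hmu(\Phi_i)+E_i(k)$ with bounded error, choose $k_i$ and $d$ by a joint Diophantine/Kronecker argument, and then track the floor-function identities to get (ii) and (iii). A few points deserve tightening before the sketch would hold up. First, the elliptic contribution is not simply $2\lfloor k\theta/(2\pi)\rfloor$ (up to Krein sign); with the lower semi-continuous convention it is $\pm(2\lfloor k\theta/(2\pi)\rfloor+1)$ away from the degenerate times and drops by one at them, and this off-by-one behavior is exactly what feeds into the $b_\pm$ correction in (iii), so it cannot be swept into an ``up to sign'' remark. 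Second, your condition (b) as stated, namely $\{k_i\theta/(2\pi)\}<\min_\ell\{\ell\theta/(2\pi)\}$, only handles the $k_i-\ell$ identity $\lfloor(k_i-\ell)\alpha\rfloor=\lfloor k_i\alpha\rfloor-\lfloor\ell\alpha\rfloor-1$; the $k_i+\ell$ identity requires $\{k_i\alpha\}+\{\ell\alpha\}<1$, i.e.\ $\{k_i\alpha\}<1-\max_\ell\{\ell\alpha\}$, so you in fact want $\{k_i\alpha\}$ smaller than $\min_\ell\min(\{\ell\alpha\},1-\{\ell\alpha\})$. Third, the Kronecker step is more delicate than ``apply equidistribution to $\theta_{i,j}/(2\pi)$ and $\hmu(\Phi_i)$ jointly'': Kronecker produces a single multiplier, whereas you need $r$ distinct $k_i$; the standard fix (and the one implicit in \cite{LZ,GG}) is to pick a single large $T$, set $k_i$ near $T/\hmu(\Phi_i)$ and $d$ near $T$, and apply Kronecker to the vector with coordinates $1/\hmu_i$ and $\theta_{i,j}/(2\pi\hmu_i)$, with multiplier $T$. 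None of these are fatal gaps — they are exactly the ``careful case analysis'' you flag at the end — but as written the argument would not quite close, in particular the sign of the correction $b_+(\Phi_i^\ell(1))-b_-(\Phi_i^\ell(1))$ in (iii) is not derived, only asserted to be of the right shape.
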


\subsection{Bott's function}
\label{sec:Bott}
Let $\Ga: [0,T] \to \Sp(2n)$ be a symplectic path starting at the identity and $P:=\Ga(T)$ its endpoint. Following \cite{Lon99,Lon02}, one can associate to $\Ga$ its Bott's function $\Bott: S^1 \to \Z$ which will be a crucial tool throughout this work. It has the following properties:
\begin{itemize}
\item[(a)] (Bott's formula) We have that $\cz(\Ga^k) = \sum_{z^k=1} \Bott(z)$ for every $k \in \N$. In particular, the mean index of $\Ga$ satisfies
\[
\mi(\Ga) = \int_{S^1} \Bott(z)\,dz,
\]
where the total measure of the circle is normalized to be equal to one.

\item[(b)] If $\Ga = \Ga_1 \oplus \Ga_2$ then $\Bott = \Bott_1 + \Bott_2$ where $\Bott_i$ is the Bott's function associated to $\Ga_i$ for $i=1,2$.

\item[(c)] If $\Ga_1$ and $\Ga_2$ are homotopic with fixed endpoints then $\Bott_1=\Bott_2$.

\item[(d)] The discontinuity points of $\Bott$ are contained in $\sigma(P) \cap S^1$, where $\sigma(P)$ is the spectrum of $P$.

\item[(e)] $\Bott(z)=\Bott(\bar z)$ for every $z \in S^1$.

\item[(f)] The \emph{splitting numbers} $S^\pm_z(P) := \lim_{\ep\to 0^+} \Bott(e^{\pm \sqrt{-1}\epsilon}z)-\Bott(z)$ depend only on $P$ and satisfy, for every $z \in S^1$,
\begin{equation}
\label{eq:splitting1}
S^\pm_z(P)=S^\mp_{\bar z}(P),
\end{equation}
\begin{equation}
\label{eq:bott2}
S^\pm_z(P^k)=\sum_{w^k=z} S^\pm_w(P)
\end{equation}
for every $k \in \N$, and
\begin{equation}
\label{eq:bound splitting}
0 \leq S^\pm_z(P) \leq \nu_z(P) \leq \eta_z(P),
\end{equation}
where $\nu_z(P)$ and $\eta_z(P)$ are the geometric and algebraic multiplicities of $z$ respectively if $z \in \sigma(P) \cap S^1$ and zero otherwise. Moreover,
\begin{equation}
\label{eq:bound nullity-splitting}
\nu_{\pm 1}(P) - S^\pm_{\pm 1}(P) \leq \frac{\eta_{\pm 1}(P)}{2}
\end{equation}
and
\begin{equation}
\label{eq:bott via splitting}
\Bott(e^{\sqrt{-1}\theta}) = \Bott(1) + S^+_1(P) + \sum_{\phi \in (0,\theta)} (S^+_{e^{\sqrt{-1}\phi}}(P)-S^-_{e^{\sqrt{-1}\phi}}(P)) - S^-_{e^{\sqrt{-1}\theta}}(P)
\end{equation}
for every $\theta \in [0,2\pi)$. (Note that the sum above makes sense since $S^\pm_z(P)\neq 0$ only for finitely many points $z \in S^1$.)

\item[(g)] $\Bott_\Ga(z)$ is lower semicontinuous with respect to $\Ga$ in the $C^0$-topology. More precisely, let $\P([0,T],\Sp(2n))$ be the set of continuous paths in $\Sp(2n)$ starting at the identity endowed with the $C^0$-topology. Then, for a fixed $z \in S^1$, the map
\[
\P([0,T],\Sp(2n)) \to \Z
\]
that sends $\Ga$ to $\Bott_\Ga(z)$, where $\Bott_\Ga$ denotes the Bott's function associated to $\Ga$, is lower semicontinuous, that is,
\[
\Bott_\Ga(z) = \sup_U \inf_{\Ga' \in U} \Bott_{\Ga'}(z),
\]
where the supremum runs over all $C^0$-neighborhoods $U$ of $\Ga$ in $\P([0,T],\Sp(2n))$.
\end{itemize}

We refer to \cite{Lon02} for a proof of these properties. It is easy to see that the geometric multiplicity satisfies the relation
\begin{equation}
\label{eq:bott3}
\nu_z(P^k)=\sum_{w^k=z} \nu_w(P).
\end{equation}
Moreover, it follows from \cite[Theorem 2.2]{LZ} that if $\cz(\Ga)\geq n$ then 
\begin{equation}
\label{eq:index iteration}
\cz(\Ga^k) + \nu(\Ga^k) \leq \cz(\Ga^{k+1})\quad \forall k \in \N,
\end{equation}
where $\nu(\Ga^k)$ is the geometric multiplicity of the eigenvalue one of the endpoint of $\Ga^k$.

In the proof of Theorem \ref{thm:example open} we will need the following definition. The \emph{upper semicontinuous Bott's function} associated to $\Ga$ is defined as
\begin{equation}
\label{eq:Bott+}
\Bott^+_\Ga(z) = \inf_U \sup_{\Ga' \in U} \Bott_{\Ga'}(z),
\end{equation}
where the infimum runs over all $C^0$-neighborhoods $U$ of $\Ga$ in $\P([0,T],\Sp(2n))$. Note that, for any $z \in S^1$, $\Bott^+_{\Ga}(z)$ is upper semicontinuous with respect to $\Ga$ in the $C^0$-topology. It is easy to see that
\begin{equation}
\label{eq:Bott x Bott+}
\Bott^+_\Ga(z) = -\Bott_{\Ga^{-1}}(z)
\end{equation}
for every $z \in S^1$.

\subsection{A comparison result}
In the proof of Theorem \ref{thm:example open} we will need the following comparison result which has intrinsic interest.

\begin{theorem}
\label{thm:comparison}
Let $\Ga_i: [0,T] \to \Sp(2n)$ ($i=1,2$) be two symplectic paths starting at the identity and satisfying the differential equation
\[
\frac{d}{dt}\Ga_i(t)= JA_i(t)\Ga_i(t),
\]
where $A_i(t)$ is a path of symmetric matrices. Suppose that $A_1(t) \geq A_2(t)$ for every $t$ and let $\Bott_i$ be the Bott's function associated to $\Ga_i$. Then
\[
\Bott_1(z) \geq \Bott_2(z)
\]
for every $z \in S^1$.

\begin{proof}
In what follows, when $i$ or $j$ is not specified, it is any element in $\{1,2\}$. Consider the complexified path $\cGa_i : [0,T] \to \Sp(2n,\C)$. Following \cite{LZ-SF1,LZ-SF2}, given $z \in S^1 \subset \C$, one can analytically associate to $\cGa_i$ an index using the spectral flow. Let 
$$
E_{T,z}=\{x \in W^{1,2}([0,T],\C^{2n});\,x(T)=zx(0)\}\textrm{ and }L_T=L^2([0,T],\C^{2n}).
$$
Consider the family of operators $L^s_{z,A_i}: E_{T,z} \to L_T$ given by
\[
L^s_{z,A_i}x(t) = -J\dot x(t) - sA_i(t)x(t)
\]
for $s \in [0,1]$ and $t \in [0,T]$, where $J$ is the standard complex structure. Each $L^s_{z,A_i}$ is a self-adjoint Fredholm operator. Therefore, we have, for a fixed $z$ and $A_i$, the corresponding spectral flow
\[
\sf(L^s_{z,A_i}) \in \Z,
\]
see, for instance, \cite{APS,LZ-SF1,RS,SW} and references therein. The $z$-index of $\cGa_i$, as defined in \cite[Definition 2.3]{LZ-SF2} and  \cite[Definition 2.8]{LZ-SF1}, is given by
\[
i_z(\cGa_i) = -\sf(L^s_{z,A_i}).
\]
It is related to the Bott's function $\Bott_i$ of $\Ga_i$ in the following way (see \cite[Corollary 2.1]{LZ-SF2}):
\begin{equation}
\label{eq:i_z x Bott}
\Bott_i(z) - i_z(\cGa_i) =
\begin{cases}
-n&\text{if}\ z=1, \\
0&\text{otherwise}.
\end{cases}
\end{equation}
The spectral flow has the following comparison property. Note that $L^s_{z,A_i}x=-J\dot x + C_{z,A_i}^sx$, where $C_{z,A_i}^s$ is the compact self-adjoint operator $C_{z,A_i}^s: E_{T,z} \to L_T$ given by $C_{z,A_i}^sx=-sA_ix$. We say that $C_{z,A_i}^s \geq C_{z,A_j}^s$ if
\[
\lg (C_{z,A_i}^s-C_{z,A_j}^s)x,x \rg_{L^2} \geq 0
\]
for every $x \in E_{T,z}$. It turns out that if $C_{z,A_i}^1 \geq C_{z,A_j}^1$ then
\[
\sf(L^s_{z,A_i}) \geq \sf(L^s_{z,A_j}),
\]
see, for instance, \cite[Theorem 3.9]{SW}. But if $A_1(t) \geq A_2(t)$ for every $t$ then $C_{z,A_2}^s \geq C_{z,A_1}^s$ for every $z$ and $s$. Therefore,
\[
i_z(\cGa_1) \geq i_z(\cGa_2).
\]
Consequently, the result follows from \eqref{eq:i_z x Bott}.
\end{proof}
\end{theorem}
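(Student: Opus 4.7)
The plan is to reduce the inequality to the well-known monotonicity of the spectral flow of a one-parameter family of self-adjoint Fredholm operators, using the fact that the pointwise ordering $A_1 \geq A_2$ translates directly into an ordering of the zeroth-order part of the symplectic differential operators generating $\Ga_i$.

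First I would pass to the analytic description of the Bott function. For each $z \in S^1$ the value $\Bott_i(z)$ equals, up to an additive constant that depends only on $z$ and not on $A_i$, the $z$-rotated Conley--Zehnder-type index $i_z(\cGa_i)$ of the complexified path $\cGa_i$. This index can be expressed as the negative of the spectral flow of the affine family
\[
L^s_{z,A_i} \;=\; -J\tfrac{d}{dt} - sA_i(\cdot),\qquad s\in [0,1],
\]
viewed as self-adjoint Fredholm operators from the twisted Sobolev space $E_{T,z}=\{x\in W^{1,2}([0,T],\C^{2n}):x(T)=zx(0)\}$ to $L_T=L^2([0,T],\C^{2n})$. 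This is standard from the Atiyah--Patodi--Singer/Robbin--Salamon viewpoint, with the version we need worked out by Long--Zhu.

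Second, I would apply the monotonicity of spectral flow. The family $L^s_{z,A_i}$ has principal part $-J\tfrac{d}{dt}$ independent of $i$, so it differs only by the compact self-adjoint multiplication operator $C^s_{z,A_i}x=-sA_i x$. The pointwise inequality $A_1(t)\geq A_2(t)$ yields, for every $z\in S^1$ and every $s\in[0,1]$, the quadratic-form inequality $C^s_{z,A_2}\geq C^s_{z,A_1}$ on $E_{T,z}$. The standard comparison principle for spectral flow under such an ordering (as in Salamon--Weber, Theorem~3.9) then gives $\sf(L^s_{z,A_1})\leq \sf(L^s_{z,A_2})$, hence $i_z(\cGa_1)\geq i_z(\cGa_2)$, and finally $\Bott_1(z)\geq \Bott_2(z)$ via the relation between the Bott function and the $z$-index.

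The step I expect to require most care is the degenerate case, when $z$ belongs to the spectrum of $\Ga_1(T)$ or $\Ga_2(T)$: then $L^1_{z,A_i}$ has a nontrivial kernel and one must fix a convention for the spectral flow at the endpoint. The cleanest way to handle this is to first establish the inequality under the nondegeneracy assumption by perturbing $A_1 \rightsquigarrow A_1+\epsilon\,\mathrm{Id}$ for a small generic $\epsilon>0$, which preserves both the hypothesis $A_1+\epsilon\,\mathrm{Id}\geq A_2$ and the Fredholm-invertibility of $L^1_{z,A_1+\epsilon\,\mathrm{Id}}$, and then to send $\epsilon\to 0^+$. The limit is legitimate on the left by the lower semicontinuity property (g) of $\Bott_\Ga(z)$ with respect to $\Ga$ in the $C^0$-topology recorded in Section~\ref{sec:Bott}, while the right-hand side is unchanged. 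Apart from this degeneracy bookkeeping and the sign convention relating $\Bott$ to $i_z$, the argument is essentially a direct appeal to spectral-flow monotonicity.
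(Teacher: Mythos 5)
Your core argument is exactly the paper's: pass to the Long--Zhu $z$-index defined via the spectral flow of $L^s_{z,A_i}=-J\dot x - sA_i x$ on the twisted Sobolev space $E_{T,z}$, use the monotonicity of spectral flow under the pointwise ordering $A_1\geq A_2$ (Starostka--Waterstraat), and then translate back to $\Bott_i$ via the relation $\Bott_i(z)-i_z(\cGa_i)=-n\cdot\delta_{z,1}$. The paper relies on the cited references to cover the endpoint-degenerate case implicitly and does not carry out a perturbation argument.

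Your supplementary treatment of the degenerate case, however, has a directional error. You perturb $A_1\rightsquigarrow A_1+\epsilon\,\mathrm{Id}$ and then appeal to lower semicontinuity of $\Gamma\mapsto\Bott_\Gamma(z)$ to pass to the limit. But lower semicontinuity gives $\Bott_1(z)\leq\liminf_{\epsilon\to 0^+}\Bott_{1,\epsilon}(z)$, the \emph{wrong} direction: combined with $\Bott_{1,\epsilon}(z)\geq\Bott_2(z)$ it yields no comparison between $\Bott_1(z)$ and $\Bott_2(z)$. Moreover, by the very comparison principle you have just proved in the nondegenerate case, $\Bott_{1,\epsilon}(z)$ is non-decreasing in $\epsilon$, so its limit as $\epsilon\to 0^+$ is the \emph{upper} semicontinuous value $\Bott_1^+(z)\geq\Bott_1(z)$ (in the notation of \eqref{eq:Bott+}); all you obtain is the weaker inequality $\Bott_1^+(z)\geq\Bott_2(z)$. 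The sign of the perturbation matters because, with the conventions in force here, the lower semicontinuous Bott function is realized by small \emph{negative} perturbations of the generating Hamiltonian (e.g., for the constant path at $\Id$ with $n=1$, the perturbation $A=-\epsilon\,\mathrm{Id}$ gives index $-1=\mu$, while $A=+\epsilon\,\mathrm{Id}$ gives $+1$). The correct repair is to perturb the \emph{other} path downward, $A_2\rightsquigarrow A_2-\epsilon\,\mathrm{Id}$: this keeps the hypothesis $A_1\geq A_2-\epsilon\,\mathrm{Id}$, removes the degeneracy of $L^1_{z,A_2-\epsilon\,\mathrm{Id}}$ for generic small $\epsilon$, and gives $\Bott_{2,-\epsilon}(z)\to\Bott_2(z)$ as $\epsilon\to 0^+$ because this is the direction in which the lower semicontinuous extension is attained. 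If $L^1_{z,A_1}$ is itself degenerate one must still argue separately (one cannot also move $A_1$ downward without risking the hypothesis), which is precisely why the paper defers to the Long--Zhu spectral-flow conventions and the comparison theorem of \cite{SW}, both formulated for possibly degenerate endpoints with compatible normalizations.
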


\section{Proof of Theorem \ref{thm:multiplicity}}
\label{sec:pf-multiplicity}

We will use several ideas from \cite{LLZ}. Suppose that $\alpha$ has finitely many simple closed orbits with positive mean index and write
\[
\P^+_s=\{\ga_1,\dots,\ga_{r_1}\}
\]
and
\[
\P^+_{ns}= \bigcup_{i=1}^{r_2} \bigcup_{g \in G} \{g\ga_{r_1+i}\},
\]
where we are identifying $g\ga_{r_1+i}$ with $\ga_{r_1+i}$ whenever $g\ga_{r_1+i}(\R)=\ga_{r_1+i}(\R)$. Clearly, $\cz(\ga_i)=\cz(g\ga_i)$ and $\nu(\ga_i)=\nu(g\ga_i)$ for every $i$ and $g \in G$. Note that $\#\P^+_{ns} \geq 2r_2$ because for every non-symmetric closed orbit $\ga$ there exists some $g\in G$ such that $g\ga \neq \ga$.

Applying Theorem \ref{thm:IRT} to the linearized flows along $\ga_1,\dots,\ga_{r_1+r_2},\ga^2_{r_1+1},\dots,\ga^2_{r_1+r_2}$ we get even positive numbers $d,k_1,\dots,k_{r_1+2r_2}$, which can be chosen to be a multiple of an arbitrarily large number, such that
\begin{equation}
\label{eq:irt1}
\cz(\ga_i^{k_i+1})=d+\cz(\ga_i),
\end{equation}
\begin{align}
\label{eq:irt2}
\cz(\ga_i^{k_i-1})&=d+\cz(\ga_i^{-1})+b_+(\ga_i)-b_-(\ga_i)\nonumber\\
&=d-(\cz(\ga_i)+b_-(\ga_i)-b_+(\ga_i)) - \nu(\ga_i^{k_i-1}),
\end{align}
\begin{equation}
\label{eq:irt3}
d-n \leq \cz(\ga_i^{k_i}) \leq \cz(\ga_i^{k_i})+\nu(\ga_i^{k_i}) \leq d+n
\end{equation}
for every $i \in \{1,\dots,r_1+r_2\}$ and
\begin{equation}
\label{eq:irt4}
\cz(\ga_{i-r_2}^{2(k_i+1)})=d+\cz(\ga_{i-r_2}^2),
\end{equation}
\begin{align}
\label{eq:irt5}
\cz(\ga_{i-r_2}^{2(k_i-1)})&=d+\cz(\ga_{i-r_2}^{-2})+b_+(\ga_{i-r_2}^2)-b_-(\ga_{i-r_2}^2)\nonumber\\
&=d-(\cz(\ga_{i-r_2}^2)+b_-(\ga_{i-r_2}^2)-b_+(\ga_{i-r_2}^2)) - \nu(\ga_{i-r_2}^{2(k_i-1)}),
\end{align}
\begin{equation}
\label{eq:irt6}
d-n \leq \cz(\ga_{i-r_2}^{2k_i}) \leq \cz(\ga_{i-r_2}^{2k_i})+\nu(\ga_{i-r_2}^{2k_i}) \leq d+n
\end{equation}
for every $i \in \{r_1+r_2+1,\dots,r_1+2r_2\}$, where in equations \eqref{eq:irt3} and \eqref{eq:irt6} we used that
\[
\mi(\ga_i^\ell) - n \leq \cz(\ga_i^\ell) \leq \cz(\ga_i^\ell) + \nu(\ga_i^\ell) \leq \mi(\ga_i^\ell) + n
\]
for every $i \in \{1,\dots,r_1+r_2\}$ and $\ell \in \N$ and in equations \eqref{eq:irt2} and  \eqref{eq:irt5} we used that
\[
\cz(\ga_i^{-\ell})=-(\cz(\ga_i^\ell)+\nu(\ga_i^\ell))\ \ \forall i \in \{1,\dots,r_1+r_2\}\ \text{and}\  \forall\ell \in \N
\]
and that $k_1,\dots,k_{r_1+2r_2}$ can be chosen such that
\[
\nu(\ga_i)=\nu(\ga_i^{k_i-1}) \ \ \forall i \in \{1,\dots,r_1+r_2\}
\]
and
\[
\nu(\ga_{i-r_2}^2)=\nu(\ga_{i-r_2}^{2(k_i-1)}) \ \ \forall i \in \{r_1+r_2+1,\dots,r_1+2r_2\}.
\]
(This last assertion follows from the fact that we can make all $k_1,\dots,k_{r_1+2r_2}$ divisible by any natural number.)

We claim that
\begin{equation}
\label{eq:claim}
k_i = 2k_{i+r_2}\ \ \forall i \in \{r_1+1,\dots,r_1+r_2\}.
\end{equation}
As a matter of fact, by our hypotheses, \eqref{eq:irt3} and \eqref{eq:irt5}, we have, for every $i \in \{r_1+1,\dots,r_1+r_2\}$,
\begin{align*}
\cz(\ga_i^{k_i}) & \geq d-n\nonumber\\
& > d - (\cz(\ga_{i}^2)+b_-(\ga_{i}^2)-b_+(\ga_{i}^2))\nonumber\\
& = \cz(\ga_i^{2(k_{i+r_2}-1)}) + \nu(\ga_i^{2(k_{i+r_2}-1)})\nonumber\\
& \geq \cz(\ga_i^{2k_{i+r_2}-2}).
\end{align*}
Since $\cz(\ga)\geq n+1$ for every $\ga \in \P^+$, we have, from \eqref{eq:index iteration}, that the function $m \mapsto \cz(\ga^m)$ is non-decreasing. Therefore,
\[
2k_{i+r_2}-2 < k_i.
\]
On the other hand, by our hypotheses, \eqref{eq:irt3} and  \eqref{eq:irt4},
\begin{align*}
\cz(\ga_i^{2k_{i+r_2}+2}) & = d + \cz(\ga_{i}^2)\nonumber\\
& > d+n\nonumber\\
& \geq \cz(\ga_i^{k_i}),
\end{align*}
implying that
\[
k_i < 2k_{i+r_2}+2.
\]
Consequently,
\[
2k_{i+r_2}-2 < k_i < 2k_{i+r_2}+2.
\]
Since $k_i$ is an even number, we conclude \eqref{eq:claim}.

Now, consider the \emph{carrier map} $\psi: \N \to \P \times \N$ as defined in \cite[Corollary 3.9]{GG}. It is an injective map such that if $\psi(m)=(\ga_i,j)$ then
\[
\cz(\ga_i^j) \leq n+2m \leq \cz(\ga_i^j) + \nu(\ga_i^j).
\]
Given $s \in \{1,\dots,n+1\}$ let $i(s)$ and $j(s)$ be the (unique) numbers satisfying $(\ga_{i(s)},j(s))=\psi(d/2-s+1)$ so that
\[
\cz(\ga_{i(s)}^{j(s)}) \leq d-2s+n+2 \leq \cz(\ga_{i(s)}^{j(s)}) + \nu(\ga_{i(s)}^{j(s)}).
\]
Note that, since $d$ can be chosen to be a positive multiple of an arbitrarily large number, $\ga_{i(s)} \in \P^+$ for every $s$. By our hypotheses and \eqref{eq:irt1},
\begin{align}
\label{eq:bound1}
\cz(\ga_{i(s)}^{j(s)}) & \leq d-2s+n+2\nonumber\\
& \leq d+n\nonumber\\
& < d+\cz(\ga_{i(s)})\nonumber\\
& = \cz(\ga_{i(s)}^{k_{i(s)}+1}).
\end{align}
When $i(s)\leq r_1$ (i.e. $\ga_{i(s)}$ is symmetric), we have, by \eqref{eq:irt2} and our hypotheses,
\begin{align}
\label{eq:bound2}
\cz(\ga_{i(s)}^{k_{i(s)}-1}) + \nu(\ga_{i(s)}^{k_{i(s)}-1}) & = d - (\cz(\ga_{i(s)})+b_+(\ga_{i(s)})-b_-(\ga_{i(s)}))\nonumber\\
& \leq d-n-1\nonumber\\
& < d-2s+n+2\nonumber\\
& \leq \cz(\ga_{i(s)}^{j(s)}) + \nu(\ga_{i(s)}^{j(s)}).
\end{align}
Thus, \eqref{eq:index iteration}, \eqref{eq:bound1} and \eqref{eq:bound2} imply that
\[
k_{i(s)}-1 < j(s) < k_{i(s)}+1\implies j(s)=k_{i(s)}
\]
whenever $i(s)\leq r_1$. If $r_1<i(s)\leq r_1+r_2$ (i.e. $\ga_{i(s)}$ is not symmetric) then, by \eqref{eq:claim}, $k_{i(s)}=2k_{i(s)+r_2}$. Hence, by \eqref{eq:irt5} and our assumptions,
\begin{align}
\label{eq:bound3}
\cz(\ga_{i(s)}^{k_{i(s)}-2}) + \nu(\ga_{i(s)}^{k_{i(s)}-2}) & = d - (\cz(\ga_{i(s)}^2)+b_+(\ga_{i(s)}^2)-b_-(\ga_{i(s)}^2))\nonumber\\
& \leq d-n-1\nonumber\\
& < d-2s+n+2\nonumber\\
& \leq \cz(\ga_{i(s)}^{j(s)}) + \nu(\ga_{i(s)}^{j(s)}).
\end{align}
It follows from \eqref{eq:index iteration}, \eqref{eq:bound1} and \eqref{eq:bound3} that
\[
k_{i(s)}-2 < j(s) < k_{i(s)}+1\implies j(s) \in \{k_{i(s)}-1,k_{i(s)}\}.
\]
Thus, we have that $j(s)=k_{i(s)}$ if $\ga_{i(s)} \in \P^+_s$ and $j(s) \in \{k_{i(s)}-1,k_{i(s)}\}$ if $\ga_{i(s)} \in \P^+_{ns}$. Since the carrier map is injective, if there exist $s_1\neq s_2$ such that $i(s_1)=i(s_2)$ then $j(s_1)\neq j(s_2)$. But if $i(s_1)=i(s_2) \leq r_1$ then $k_{i(s_1)}=k_{i(s_2)}\implies j(s_1)=j(s_2)$, a contradiction. If $i(s_1)=i(s_2) > r_1$ then $\{j(s_1),j(s_2)\}=\{k_{i(s_1)}-1,k_{i(s_1)}\}$. Hence,
\[
\#\{s \in \{1,\dots,n+1\}; j(s)\leq r_1\} \leq r_1
\]
and
\[
\#\{s \in \{1,\dots,n+1\}; j(s) > r_1\} \leq 2r_2,
\]
implying that
\[
\#\P^+ = \#\P^+_s + \#\P^+_{ns} \geq r_1+2r_2 \geq n+1.
\]

\section{Proof of Theorem \ref{thm:c=>sdc1}}
\label{sec:c=>sdc}

\subsection{Idea of the proof}
Let us first describe the idea of the proof. Let $H: \R^{2n+2}\setminus\{0\} \to \R$ be a homogeneous of degree two Hamiltonian such that $\Sigma_\alpha=H^{-1}(1)$. Since $\alpha$ is convex, so is $H$. If $\alpha$ is invariant under the antipodal map then clearly $H$ is invariant under the antipodal map as well.

Given a closed orbit $\ga: [0,T] \to S^{2n+1}$ of $\alpha$, let $\Ga$ be the linearized Hamiltonian flow of $H$ along $\ga$ seen as closed Hamiltonian orbit on $\Sigma_\alpha$. We have that $\cz(\ga)=\cz(\Ga)+1$ and $b_\pm(\ga)=b_\pm(\Ga(T))$ (see Proposition \ref{prop:ReebHam}). Since $H$ is convex, it is well known that $\cz(\Ga)\geq n+1$ and therefore $\cz(\ga)\geq n+2$.

Now, suppose that $\ga$ is symmetric. The fact that $H$ is invariant under the antipodal map implies that $\Ga$ is the second iterate of the linearized Hamiltonian flow along half the orbit $\Ga|_{[0,T/2]}$ (see Proposition \ref{prop:2nd_iterate}). By the convexity of $H$, $\cz(\Ga|_{[0,T/2]}) \geq n+1$ and it turns out that if a symplectic path $\Phi: [0,1] \to \Sp(2n+2)$ satisfies $\cz(\Phi)\geq n+1$ then $\cz(\Phi^2) + b_-(\Phi^2(2)) - b_+(\Phi^2(1)) \geq n+1$ (see Proposition \ref{prop:sdc_2nd_iterate}).

\subsection{Proof of the Theorem}
Given a periodic orbit $\ga$ of $\alpha$, let $\Ga_\ga: [0,T] \to \Sp(2n)$ be the path given by the linearized Reeb flow along $\ga$ (using a trivialization of the contact structure over a capping disk) and $\Ga: [0,T] \to \Sp(2n+2)$ the path given by the linearized Hamiltonian flow of $H$ along $\ga$ (using the constant trivialization of $T\R^{2n+2}$) seen as a periodic orbit of $H$ on $\Sigma_\alpha$. Denote by $\Bott_\ga$ and $\Bott$ the Bott's functions associated to $\Ga_\ga$ and $\Ga$ respectively.

\begin{proposition}
\label{prop:ReebHam}
We have that $\Bott_\ga(1)=\Bott(1)+1$ and $\Bott_\ga(z)=\Bott(z)$ for every $z\neq 1$. Moreover, $b_\pm(\Ga(T))=b_\pm(\Ga_\ga(T))$.
\end{proposition}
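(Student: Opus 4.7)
The plan is to exploit the natural symplectic splitting $T\R^{2n+2}|_\ga = \xi|_\ga \oplus W|_\ga$, where $W=\tspan\{X_H,Z\}$ with $Z=\sum_i(q_i\p_{q_i}+p_i\p_{p_i})$ the Liouville vector field. By additivity of the Bott function under symplectic direct sums (property (b) of Section \ref{sec:Bott}), the statement reduces to an explicit computation of the contribution of $W$, which should produce the classical ``$+1$ shift'' between Reeb and Hamiltonian Conley--Zehnder indices for degree-two homogeneous defining Hamiltonians.

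First I would verify the splitting and its invariance under the linearized flow. Since $H$ is autonomous, $d\phi^t_H\cdot X_H=X_H$, so $\Ga(t)\cdot X_H(\ga(0))=X_H(\ga(t))$. Homogeneity of degree two gives $\phi^t_H(\lambda z)=\lambda\phi^t_H(z)$; differentiating in $\lambda$ at $\lambda=1$ yields $\Ga(t)\cdot Z(\ga(0))=\ga(t)=Z(\ga(t))$. Thus $\Ga(t)$ carries the adapted frame of $W_{\ga(0)}$ onto that of $W_{\ga(t)}$, and at $t=T$ one has $\Ga(T)|_{W_{\ga(0)}}=\mathrm{Id}_2$. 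Moreover $\Ga(T)$ preserves $\xi_{\ga(0)}$ (because $\phi^T$ fixes $\ga(0)$ and preserves the contact hyperplane), and its restriction there coincides with the linearized Reeb return map.

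Next, choose a symplectic trivialization of $T\R^{2n+2}|_\ga$ adapted to the splitting: on $\xi$ take the trivialization induced by the capping disk used to define $\Ga_\ga$, and on $W$ the moving symplectic basis built from $X_H(\ga(t))$ and $Z(\ga(t))$. In this adapted trivialization the linearized flow becomes the block-diagonal path $\widetilde\Ga_\ga\oplus I_2$, and additivity of Bott functions gives a corresponding splitting. The adapted trivialization differs from the constant trivialization of $T\R^{2n+2}$ (used to define $\Ga$) by a symplectic loop whose $\xi$-factor has vanishing Maslov contribution --- the capping disk in $S^{2n+1}$ extends radially to a disk in $\R^{2n+2}$ inducing the constant trivialization up to homotopy --- while its $W$-factor is the canonical loop of symplectic frames of $W$ traced out as one goes once around $\ga$. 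The latter is homotopic (with fixed endpoints, by property (c) of Section \ref{sec:Bott}) to a standard unipotent shear in $\Sp(2)$ whose Bott function equals $-1$ at $z=1$ and $0$ at $z\neq 1$; this is precisely the Hofer--Wysocki--Zehnder discrepancy. Combining these yields $\Bott_\ga(z)=\Bott(z)$ for $z\neq 1$ and $\Bott_\ga(1)=\Bott(1)+1$.

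For the $b_\pm$ statement, let $V_\xi$ denote the generalized $1$-eigenspace of $\Ga_\ga(T)$ in $\xi_{\ga(0)}$. Since $V_\xi$ and $W_{\ga(0)}$ are symplectically orthogonal subspaces preserved by $\Ga(T)$, the generalized $1$-eigenspace of $\Ga(T)$ is the symplectic direct sum $V_\xi\oplus W_{\ga(0)}$, on which $\Ga(T)$ restricts to $\Ga_\ga(T)|_{V_\xi}\oplus\mathrm{Id}_2$. The block $\mathrm{Id}_2$ is generated by the zero quadratic form on $\R^2$ and hence contributes neither a $Q_+$ nor a $Q_-$ summand to the Williamson decomposition, so $b_\pm(\Ga(T))=b_\pm(\Ga_\ga(T))$. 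The main technical obstacle throughout is the careful book-keeping of trivializations, and in particular the verification that the $W$-factor of the comparison loop realizes the claimed Maslov contribution; this is a convention-sensitive but routine symplectic linear-algebra computation.
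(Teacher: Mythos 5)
Your overall decomposition of $T\R^{2n+2}|_\gamma$ into $\xi\oplus\xi^\omega$ (your $W$), the observation that the linearized flow fixes $X_H$ and the Liouville field and hence acts as $\mathrm{Id}_2$ on $W$ in the moving frame, and your treatment of $b_\pm$ via the generalized $1$-eigenspace all coincide with the paper's argument, and the final conclusion is right. However, the middle paragraph misattributes the $+1$ discrepancy. You assign the $-1$ at $z=1$ to the ``$W$-factor of the comparison loop'' between the adapted and constant trivializations, but a change of trivialization by a loop shifts the Bott function by a $z$-independent amount (twice the Maslov index of the loop); a contractible loop therefore contributes $0$, not $-1$ at $z=1$. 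The correct accounting, which the paper makes precise, is: (1) the adapted trivialization $\Phi^\xi\oplus\Phi^{\xi^\omega}$ extends over the capping disk (because $X_H$ and the Liouville field are defined over the whole disk and form a symplectic frame of $\xi^\omega$ there), hence is homotopic to the constant trivialization and the comparison loop contributes nothing; and (2) the $-1$ at $z=1$ (and $0$ elsewhere) is the Bott function of the constant identity path $I_2$ appearing as the $\xi^\omega$-block in the split path $\widetilde\Gamma_\gamma\oplus I_2$, not a property of a loop of frames. Your phrasing ``homotopic with fixed endpoints to a standard unipotent shear'' is also misleading: a loop based at $\mathrm{Id}$ can only be homotopic rel endpoints to the constant path, and the relevant fact is that the constant path in $\Sp(2)$ has $\Bott(1)=-1$. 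If you reassign the $-1$ to the $I_2$ block and replace the loop discussion with the statement that the adapted trivialization is homotopic to the constant one over the capping disk, the argument becomes correct and matches the paper's.
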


\begin{proof}
Let $\xi$ be the contact structure on $\Sigma_\al$ and $\xi^\om$ its symplectic orthogonal with respect to the canonical symplectic form $\om$. Clearly both $\xi$ and $\xi^\om$ are invariant under the linearized Hamiltonian flow of $H$. Consider a capping disk $\sigma: D^2 \to \Sigma_\al$ such that $\sigma|_{\partial D^2}=\ga$. Denote by $\Phi^\xi: \sigma^*\xi \to D^2 \times \R^{2n}$ and $\Phi^{\xi^\om}: \sigma^*\xi^\om \to D^2 \times \R^{2}$ the unique (up to homotopy) trivializations of the pullbacks of $\xi$ and $\xi^\om$ by $\sigma$. Fix a symplectic basis $\{e,f\}$ of $\R^2$. Note that $\Phi^{\xi^\om}$ can be chosen such that $\Phi^{\xi^\om}(X_H)=e$ and $\Phi^{\xi^\om}(Y)=f$, where $X_H$ is the Hamiltonian vector field of $H$ and $Y(x)=x$ (note that $\{X_H(\sigma(x)),Y(\sigma(x))\}$ is a symplectic basis of $\sigma^*\xi^\om(x)$ for every $x \in D^2$). Indeed, let $A: D^2 \to \Sp(2)$ be the map that associates to $x \in D^2$ the unique symplectic map that sends $\Phi^{\xi^\om}(X_H(\sigma(x)))$ to $e$ and $\Phi^{\xi^\om}(Y(\sigma(x)))$ to $f$. Then $\bar A \circ \Phi^{\xi^\om}$ gives the desired trivialization, where $\bar A: D^2 \times \R^2 \to D^2 \times \R^2$ is given by $\bar A(x,v)=(x,A(x)v)$.

We have that $\Phi:=\Phi^\xi \oplus \Phi^{\xi^\om}$ gives a trivialization of $\sigma^*T\R^{2n+2}$. Let $\Ga_\Phi: [0,T] \to \Sp(2n+2)$ be the symplectic path given by the linearized Hamiltonian flow of $H$ along $\ga$ using $\Phi$. By construction, we can write $\Ga_\Phi=\Ga_\Phi^{\xi} \oplus \Ga_\Phi^{\xi^\om}$, where $\Ga_\Phi^{\xi}$ and $\Ga_\Phi^{\xi^\om}$ are given by the linearized Hamiltonian flow of $H$ restricted to $\xi$ and $\xi^\om$ respectively. Let $\Bott_\Phi$, $\Bott^\xi_\Phi$ and $\Bott^{\xi^\om}_\Phi$ be the Bott's function associated to $\Ga_\Phi$, $\Ga_\Phi^{\xi}$ and $\Ga_\Phi^{\xi^\om}$ respectively. By the additivity property of the Bott's function,
\begin{equation}
\label{eq:Bott_split}
\Bott_\Phi(z) = \Bott^\xi_\Phi(z) + \Bott^{\xi^\om}_\Phi(z),
\end{equation}
for every $z \in S^1$.  By construction, the path $\Phi^{\xi^\om}$ is constant equal to the identity ($H$ is homogenous of degree two and therefore its linearized Hamiltonian flow preserves both $X_H$ and $Y$) and consequently
\[
\Bott^{\xi^\om}_\Phi(z) =
\begin{cases}
-1&\text{if}\ z=1 ,\\
0&\text{otherwise}.
\end{cases}
\]
Thus, by \eqref{eq:Bott_split},
\begin{equation}
\label{eq:Bott_Phi}
\Bott_\Phi(z) =
\begin{cases}
\Bott^\xi_\Phi(z)-1&\text{if}\ z=1, \\
\Bott^\xi_\Phi(z)&\text{otherwise}.
\end{cases}
\end{equation}
Now, note that $\Phi$ is homotopic to the usual (global) trivialization of $T\R^{2n+2}$ because both are defined over the whole capping disk. Hence,
\begin{equation}
\label{eq:Bott}
\Bott(z) = \Bott_\Phi(z)
\end{equation}
for every $z$. Moreover, by the construction of $\Phi^{\xi}$,
\begin{equation}
\label{eq:Bott_ga}
\Bott_\ga(z) = \Bott^\xi_\Phi(z)
\end{equation} 
for all $z$. Thus, the first assertion of the proposition follows from \eqref{eq:Bott_Phi}, \eqref{eq:Bott} and \eqref{eq:Bott_ga}. The last one is a consequence of the equality
\[
b_\pm(\Ga(T)) = b_\pm(\Ga_\Phi(T)) = b_\pm(\Ga^\xi_\Phi(T)) + b_\pm(\Ga^{\xi^\om}_\Phi(T)) = b_\pm(\Ga_\ga(T)) + b_\pm(\Ga^{\xi^\om}_\Phi(T))
\]
and the fact that $b_\pm(\Ga^{\xi^\om}_\Phi(T))=0$ because $\Ga^{\xi^\om}_\Phi(T)$ is the identity.
\end{proof}

Now, we need the following algebraic lemma. To simplify notation, we will drop the subscript in $\nu_1$.

\begin{lemma}
\label{lemma:linear}
Let $W$ be a symplectic vector space and $P: W \to W$ a symplectic linear map. Then $b_-(P) - b_+(P)=2S^+_1(P)-\nu(P)$.
\end{lemma}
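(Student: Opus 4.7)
The plan is to reduce to the totally degenerate case, exploit the vanishing of the mean index, and then run a direct case analysis on Williamson normal forms. First, each of $b_\pm(P)$, $\nu(P)$ and $S^+_1(P)$ is additive under symplectic direct sums and vanishes on a complementary subspace on which $P$ has no eigenvalue at $1$ (for $S^+_1$ this uses property (b) together with the continuity of $\Bott$ at $1$ whenever the path's endpoint avoids $1$). Restricting to the generalized $1$-eigenspace of $P$, I may therefore assume $P=\exp(JQ)$ with $Q$ symmetric and $JQ$ nilpotent.

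Next, consider the canonical path $\Ga(t)=\exp(tJQ)$. Since $JQ$ is nilpotent, every eigenvalue of $\Ga(t)$ equals $1$ for all $t$, so the mean index satisfies $\mi(\Ga)=0$. By properties (d) and (e), $\Bott_\Ga$ is constant on the connected set $S^1\setminus\{1\}$; call that constant $a$. Then
\[
a=\int_{S^1}\Bott_\Ga\,dz=\mi(\Ga)=0,
\]
so
\[
S^+_1(P)=\Bott_\Ga(1^+)-\Bott_\Ga(1)=-\cz(\Ga),
\]
and the claimed identity is equivalent to $\cz(\Ga)=\tfrac{1}{2}\bigl(b_+(P)-b_-(P)-\nu(P)\bigr)$.

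By Williamson's theorem, $Q$ decomposes symplectically as a direct sum of the four irreducible blocks listed in Section \ref{sec:background}. Both sides of the reformulated identity are additive under this decomposition (for $\cz$ via Bott's formula and the additivity of $\Bott$), so it suffices to verify the identity block by block. For the zero form on $\R^{2d_0}$ one has $\nu=2d_0$, $b_\pm=0$, and the constant path $\Ga\equiv I$ has $\cz=-d_0$ (achieved by a small negative definite perturbation yielding $\cz=-n$), matching $\tfrac{1}{2}(0-0-2d_0)=-d_0$. For $Q_0$ on $\R^{2d}$ with $d\geq 3$ odd, a direct computation shows that $JQ_0$ consists of two nilpotent Jordan blocks of size $d$, so $\nu=2$ and $b_\pm=0$; a generic perturbation $Q_0+\epsilon Q'$ opens exactly one elliptic pair near $1$ together with one loxodromic quadruple, yielding $\cz=-1$. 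For $Q_\pm$ on $\R^{2d}$ the matrix $JQ_\pm$ is a single Jordan block of size $2d$ so $\nu=1$ and $b_\pm=1$ (the other vanishing); analogous perturbation analyses give $\cz(\Ga_{Q_+})=0$ and $\cz(\Ga_{Q_-})=-1$. The delicate point, and the main obstacle of the plan, is the identification of the rotation direction of the elliptic eigenvalues of $JQ_\epsilon$ for the $Q_0$ and $Q_\pm$ blocks and the matching of their contributions against the paper's normalization ($\cz=n$ for small positive definite $Q$); the loxodromic contributions vanish and require no such care.
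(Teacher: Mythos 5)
Your skeleton matches the paper's: reduce to the generalized $1$-eigenspace (where $P=\exp(JQ)$ with $JQ$ nilpotent), decompose via Williamson normal forms, use additivity of $\nu$, $b_\pm$, $S^+_1$, and exploit $\mi(\Ga)=0$ together with the constancy of $\Bott_\Ga$ on $S^1\setminus\{1\}$ to obtain $S^+_1(P)=-\cz(\Ga)$, then finish block by block. The divergence is in the block-wise computation of $\cz$. The paper invokes the signature formula for these short paths, $\cz\bigl(\exp(tJQ_*)\bigr)=\tfrac{1}{2}\bigl(\sign(Q_*)-\nu_0(Q_*)\bigr)$, which is valid here precisely because $JQ_*$ is nilpotent (eigenvalues never leave $0$), and this gives $\cz=-1,\,0,\,-1$ for $Q_0,\,Q_+,\,Q_-$ in one line each. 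You instead propose a direct perturbation analysis of the eigenvalue splitting, and you correctly flag the rotation direction as ``the main obstacle.''

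That obstacle is real and your sketch does not surmount it. Your description of the splitting for the $Q_0$ block --- ``one elliptic pair near $1$ together with one loxodromic quadruple'' --- accounts for only six eigenvalues and is thus only plausible for $d=3$. For $d=5,7,\dots$ the two nilpotent Jordan blocks of size $d$ produce $2d$ eigenvalues under generic Hamiltonian perturbation, arranged in a Puiseux-type fan subject to the symplectic symmetry, and one must track the winding contributions of all of them; the claimed pattern simply does not hold in general. The same vagueness affects the ``analogous perturbation analyses'' for $Q_\pm$. Your end values ($\cz=-1,0,-1$, hence $S^+_1(P_0)=1$, $S^+_1(P_+)=0$, $S^+_1(P_-)=1$) agree with the paper, so the conclusion is correct, but the computation you propose to reach them is precisely the hard part and is left undone. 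Replacing the perturbation step by the signature formula $\cz=\tfrac{1}{2}(\sign Q-\nu_0 Q)$ closes the gap and recovers the paper's proof exactly.
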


\begin{proof}
Let $V \subset W$ be the $P$-invariant symplectic subspace whose complexification is the generalized eigenspace of the eigenvalue one. We have that $V$ can be symplectically decomposed into a sum
\[
V = V_\Id \oplus \underbrace{V_0 \oplus \dots \oplus V_0}_{b_0(P)\ \text{times}} \oplus \underbrace{V_+ \oplus \dots \oplus V_+}_{b_+(P)\ \text{times}} \oplus \underbrace{V_- \oplus \dots \oplus V_-}_{b_-(P)\ \text{times}},
\]
where each term is $P$-invariant and $P|_{V_\Id}=\Id$, $P|_{V_0}=P_0:=\exp(JQ_0)$ and $P|_{V_\pm}=P_\pm:=\exp(JQ_\pm)$ up to a (symplectic) change of coordinates, where $Q_0$ and $Q_\pm$ are the quadratic forms defined in Section \ref{sec:sdc} and discussed in \cite[Section 4.1.3]{GG}. Thus,
\begin{equation}
\label{eq:spliting}
S^+_1(P) = S^+_1(P|_{V_\Id}) + b_0(P)S^+_1(P_0) + b_+(P)S^+_1(P_+) + b_-(P)S^+_1(P_-)
\end{equation}
and
\begin{equation}
\label{eq:nullity}
\nu(P) = \nu(P|_{V_\Id}) + b_0(P)\nu(P_0) + b_+(P)\nu(P_+) + b_-(P)\nu(P_-).
\end{equation}
The first terms in the right hand side of the two previous equations satisfy
\begin{equation*}
2S^+_1(P|_{V_\Id}) = \nu(P|_{V_\Id}) = \dim V_\Id.
\end{equation*}
Clearly,
\begin{equation*}
\nu(P_0)=\nu_0(Q_0)=2\ \text{and}\ \nu(P_\pm)=\nu_0(Q_\pm)=1.
\end{equation*}
To compute $S^+_1(P_*)$ for $* \in \{0,\pm\}$ consider the symplectic path $\Psi_*(t)=\exp(tJQ_*)$ for $t \in [0,1]$. Since the spectrum $\sigma(\Psi_*(t))$ of $\Psi_*(t)$ is constant equal to $\{1\}$ for every $t$ we have that $\mi(\Psi_*)=0$. Let $\Bott_*: S^1 \to \Z$ be Bott's function associated to $\Psi_*$. The fact that $\sigma(P_*)=\sigma(\Psi_*(1))=\{1\}$ implies that $\Bott_*$ is constant on $S^1\setminus\{1\}$. But
\[
\int_{S^1} \Bott_*(z)\,dz = \mi(\Psi_*) = 0
\]
implying that $\Bott_*(z)=0$ for every $z\neq 1$. Since $\Bott_*(1)=\cz(\Psi_*)$ we conclude that
\begin{equation*}
S^+_1(P_*) = -\mu(\Psi_*).
\end{equation*}
Therefore,
\begin{equation*}
S^+_1(P_0) = -\frac 12(\sign(Q_0)-\nu_0(Q_0)) = -\frac 12(0-2)=1
\end{equation*}
and
\begin{equation*}
S^+_1(P_\pm) = -\frac 12(\sign(Q_\pm)-\nu_0(Q_\pm)) = -\frac 12(\pm 1-1)=
\begin{cases}
0\ \ \text{for}\ P_+ ,\\
1\ \ \text{for}\ P_-.
\end{cases}
\end{equation*}
Consequently, it follows from \eqref{eq:spliting} and \eqref{eq:nullity} that
\[
2S^+_1(P) - \nu(P) = b_-(P) - b_+(P).
\]
\end{proof}

It is well known that if $\alpha$ is convex then it is dynamically convex, that is, $\mu(\ga) \geq n+2$ for every periodic orbit $\ga$ of $\alpha$. (It follows from Proposition \ref{prop:ReebHam} and the fact that $\mu(\Ga) \geq n+1$ whenever $\alpha$ is convex.) Thus, by the previous two results, in order to prove Theorem \ref{thm:c=>sdc1}, we need to show that 
\[
\cz(\Ga) + 2S^+_1(\Ga(T)) - \nu(\Ga(T)) \geq n+1
\]
for every symmetric closed orbit $\ga$ of $\alpha$, where $\Ga$ is the linearized Hamiltonian flow of $H$ along $\ga$. The proof is based on the following observation.

\begin{proposition}
\label{prop:2nd_iterate}
Suppose that $m=2$. If $\ga$ is symmetric then $\Ga$ is the second iterate of the path $\Ga|_{[0,T/2]}$.
\end{proposition}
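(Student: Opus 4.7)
The plan is to translate the symmetry of $\ga$ into a time-shift relation on the curve, then use the antipodal invariance of $H$ to promote that to a $T/2$-periodicity of the linearization, and finally invoke uniqueness of ODE solutions to conclude. First I would argue that because the $\Z_2$-action is generated by the antipodal map $\sigma(x)=-x$ and because $\sigma$ cannot fix the orbit of $\ga$ pointwise (since $\ga$ avoids the origin), the condition $\sigma(\ga(\R))=\ga(\R)$ forces the existence of some $s\in(0,T)$ with $\ga(t+s)=-\ga(t)$ for all $t$. Applying this twice yields $\ga(t+2s)=\ga(t)$, and by the minimality of the period $T$ we conclude $s=T/2$, so
\[
\ga(t+T/2)=-\ga(t)\qquad\text{for all }t.
\]

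Next I would upgrade this to a symmetry of the time-dependent Hessian along $\ga$. Differentiating $H(-x)=H(x)$ twice gives $D^2H(-x)=D^2H(x)$, hence
\[
A(t+T/2):=D^2H(\ga(t+T/2))=D^2H(-\ga(t))=D^2H(\ga(t))=A(t),
\]
so the coefficient matrix of the linearized Hamiltonian equation $\dot\Ga(t)=JA(t)\Ga(t)$ is $T/2$-periodic.

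To finish, I would introduce the comparison path $\Psi(t):=\Ga(t+T/2)\Ga(T/2)^{-1}$ on $[0,T/2]$. It starts at the identity and satisfies
\[
\dot\Psi(t)=JA(t+T/2)\Ga(t+T/2)\Ga(T/2)^{-1}=JA(t)\Psi(t),
\]
by the previous step. Thus $\Psi$ and $\Ga|_{[0,T/2]}$ solve the same linear ODE with the same initial condition, so by uniqueness $\Psi\equiv\Ga|_{[0,T/2]}$. Rewriting, $\Ga(t+T/2)=\Ga(t)\Ga(T/2)$ for $t\in[0,T/2]$, which is precisely the statement that the path $\Ga:[0,T]\to\Sp(2n+2)$ is the second iterate of the half-period path $\Ga|_{[0,T/2]}$.

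The only potentially delicate point is the first step, where one must rule out that a symmetric orbit admits a shift $s$ strictly between $0$ and $T/2$; this is handled by observing that two shifts would compose to a period strictly less than $T$, contradicting minimality. Everything afterward is a clean uniqueness-of-linear-ODE argument, so I expect no substantive obstacle beyond this bookkeeping.
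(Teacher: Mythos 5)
Your proposal is correct and follows essentially the same route as the paper: deduce $\ga(t+T/2)=-\ga(t)$ from the antipodal symmetry, conclude that the coefficient matrix $d^2H(\ga(t))$ is $T/2$-periodic, and infer from ODE uniqueness that the linearized flow over $[0,T]$ is the square of the half-path. You simply spell out two steps the paper treats as implicit — the derivation of the time-shift $s=T/2$ from $\sigma(\ga(\R))=\ga(\R)$, and the explicit uniqueness argument via $\Psi(t)=\Ga(t+T/2)\Ga(T/2)^{-1}$ — both of which are sound.
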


\begin{proof}
The path $\Ga$ satisfies the equation
\[
\frac{d}{dt}\Ga(t)=Jd^2H(\ga(t))\Ga(t).
\]
We claim that the Hessian $d^2H(\ga(t))$ is $T/2$-periodic. Indeed,
since $H$ is antipodally symmetric, we have that $dH(-x)=-dH(x)$, and hence
\[
d^2H(-x)=d^2H(x)
\]
for every $x \in \R^{2n+2}\setminus\{0\}$. The symmetry of $\ga$ means that $\ga(t+T/2)=-\ga(t)$ for every $t$ and consequently
\[
d^2H(\ga(t+T/2))=d^2H(\ga(t))
\]
for every $t$.
\end{proof}

Now Theorem \ref{thm:c=>sdc1} follows from Lemma \ref{lemma:linear}, Propositions \ref{prop:ReebHam} and \ref{prop:2nd_iterate} and the following result proved in \cite[Lemma 4.1]{LLZ} and \cite[Lemma 15.6.3]{Lon02}. For the sake of completeness, we will provide a detailed argument.

\begin{proposition}[\cite{LLZ,Lon02}]
\label{prop:sdc_2nd_iterate}
Let $\Ga: [0,1] \to \Sp(2n+2)$ be a symplectic path starting at the identity. If $\cz(\Ga)\geq n+1$ then
\[
\cz(\Ga^2) + 2S^+_1(\Ga^2(2)) - \nu(\Ga^2(2)) \geq n+1.
\]
\end{proposition}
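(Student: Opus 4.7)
\emph{Proof plan.} The plan is to convert the inequality into an algebraic condition on the Bott invariants of $\Ga$ and its endpoint $P := \Ga(1)$, and then verify it by decomposing $P$ via its Williamson normal form. First, Bott's formula (property (a) of Section \ref{sec:Bott}) gives $\cz(\Ga^2) = \Bott_\Ga(1) + \Bott_\Ga(-1) = \cz(\Ga) + \Bott_\Ga(-1)$, where $\Bott_\Ga$ denotes the Bott function of $\Ga$. Applying the iteration identities \eqref{eq:bott2} and \eqref{eq:bott3} with $k = 2$ yields
\[
S^+_1(P^2) = S^+_1(P) + S^+_{-1}(P), \qquad \nu_1(P^2) = \nu_1(P) + \nu_{-1}(P).
\]
Substituting and invoking the hypothesis $\cz(\Ga) \geq n+1$, the claim reduces to
\[
\Bott_\Ga(-1) + 2S^+_1(P) + 2S^+_{-1}(P) - \nu_1(P) - \nu_{-1}(P) \geq 0.
\]

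The next step is to decompose $P$ via its Williamson normal form as a symplectic direct sum of basic blocks, one for each eigenvalue type: hyperbolic blocks (eigenvalues off $S^1$), elliptic blocks at $e^{\pm i\theta}$ for $\theta \in (0,\pi)$, and the degenerate blocks at $\pm 1$ built from the quadratic forms $Q_0, Q_\pm$ of Section \ref{sec:sdc}. Up to homotopy rel endpoints, $\Ga$ splits accordingly as a symplectic sum $\Ga = \bigoplus_j \Ga_j$, and all invariants appearing in the reduced inequality are additive under this decomposition (by additivity of $\Bott$ from property (b), and by the block structure of $S^+_{\pm 1}$ and $\nu_{\pm 1}$). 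It therefore suffices to verify the inequality on each summand: the hyperbolic and off-$\pm 1$ elliptic summands contribute zero to $S^+_{\pm 1}$ and $\nu_{\pm 1}$, so only $\Bott_{\Ga_j}(-1)$ matters and a direct computation on canonical paths shows the contribution is non-negative. The $\pm 1$-eigenvalue blocks are handled by explicit computation on each basic normal form, combined with the sharp estimate $\nu_{\pm 1}(P) - S^\pm_{\pm 1}(P) \leq \eta_{\pm 1}(P)/2$ from \eqref{eq:bound nullity-splitting} and the evaluation of $S^+_1$ on the basic models $P_0, P_\pm$ carried out in the proof of Lemma \ref{lemma:linear}.

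The hard part will be the case analysis at the $-1$-eigenvalue Jordan blocks, where the path $\Ga_j$ can contribute a negative $\Bott_{\Ga_j}(-1)$ and tight accounting is needed to balance this against $2S^+_{-1}(P) - \nu_{-1}(P)$. A tempting shortcut is the iteration inequality \eqref{eq:index iteration}, which yields $\Bott_\Ga(-1) \geq \nu_1(P)$ and reduces the problem to showing $2S^+_1(P) + 2S^+_{-1}(P) \geq \nu_{-1}(P)$; however, this latter inequality fails in general for a symplectic matrix with a nontrivial Jordan block at $-1$ of $Q_+$-type, so one cannot avoid using finer path-level information about $\Ga$ beyond the matrix $P$ itself. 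This is exactly where the normalization $\cz(\Ga) \geq n+1$ (one unit above the "baseline" $n+1$ for small positive definite generators in dimension $2n+2$) is used to guarantee an extra reservoir of index along the path to absorb the negative contributions.
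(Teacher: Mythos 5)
Your initial reduction is correct: Bott's formula gives $\cz(\Ga^2) = \Bott_\Ga(1) + \Bott_\Ga(-1)$, and using $\Bott_\Ga(1) = \cz(\Ga) \geq n+1$ once reduces the claim to
\[
\Bott_\Ga(-1) + 2S^+_1(P) + 2S^+_{-1}(P) - \nu_1(P) - \nu_{-1}(P) \geq 0,
\]
and you also correctly anticipate that this cannot be settled from $P = \Ga(1)$ alone. However, your proposed Williamson-normal-form case analysis has a concrete gap precisely at this point. When you split $\Ga = \bigoplus_j \Ga_j$ according to the eigenvalue structure of $P$, the homotopy class rel endpoints of each factor $\Ga_j$ is \emph{not} determined by $P_j$; composing any $\Ga_j$ with a full negative loop in $\Sp$ decreases both $\Bott_{\Ga_j}(1)$ and $\Bott_{\Ga_j}(-1)$ by $2$, leaving the endpoint untouched. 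Thus the assertion that "the hyperbolic and off-$\pm1$ elliptic summands contribute ... non-negative" $\Bott_{\Ga_j}(-1)$ is false in general, and more importantly the hypothesis $\cz(\Ga) = \sum_j \cz(\Ga_j) \geq n+1$ constrains only the \emph{sum} of the factor indices, not each factor separately, so a per-block verification cannot close the argument without a mechanism for transferring index surplus between blocks.

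What the paper's proof supplies, and what is missing from your plan, is precisely such a mechanism: the path-level identity \eqref{eq:bott via splitting},
\[
\Bott_\Ga(-1) = \Bott_\Ga(1) + S^+_1(P) + \sum_{\phi\in(0,\pi)}\bigl(S^+_{e^{\sqrt{-1}\phi}}(P)-S^-_{e^{\sqrt{-1}\phi}}(P)\bigr) - S^+_{-1}(P),
\]
where $S^-_{-1} = S^+_{-1}$ by \eqref{eq:splitting1}. This expresses $\Bott_\Ga(-1)$ as $\Bott_\Ga(1)$ \emph{plus endpoint-only data}, so the hypothesis $\Bott_\Ga(1)\geq n+1$ enters a second time as a $2\Bott_\Ga(1) \geq 2(n+1)$ term, after which one only needs the endpoint estimates \eqref{eq:bound splitting} and \eqref{eq:bound nullity-splitting} together with the global dimension bound $\sum_{\phi\in(0,\pi)}\eta_{e^{\sqrt{-1}\phi}}(P) + \eta_1(P)/2 + \eta_{-1}(P)/2 \leq n+1$. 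No normal-form decomposition is needed. In short: your setup and diagnosis are sound, but the proof is incomplete without \eqref{eq:bott via splitting} or a substitute for it, and the Williamson case analysis as stated would not account for the block-dependent ambiguity in the homotopy classes of the $\Ga_j$.
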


\begin{proof}
Let $P=\Ga(1)$ and $\Bott$ be the Bott's function associated to $\Ga$. By \eqref{eq:splitting1}, \eqref{eq:bound splitting} and \eqref{eq:bound nullity-splitting} we have that
\begin{align}
\label{eq:bound}
& \sum_{\theta \in (0,\pi)} S^-_{e^{\sqrt{-1}\theta}}(P) + (\nu_1(P)-S^+_1(P)) + (\nu_{-1}(P)-S^+_{-1}(P)) \nonumber\\
& \leq \sum_{\theta \in (0,\pi)} \eta_{e^{\sqrt{-1}\theta}}(P) + \frac{\eta_1(P)}{2} + \frac{\eta_{-1}(P)}{2} \nonumber\\
& \leq n+1.
\end{align}
Thus, omitting the dependence of $S^\pm$ and $\nu$ on $P$, we arrive at
\begin{align*}
& \cz(\Ga^2) + 2S^+_1(\Ga^2(2)) - \nu(\Ga^2(2)) \\
& = \Bott(1) + \Bott(-1) + 2(S^+_1+S^+_{-1}) - (\nu_1+\nu_{-1}) \\
& = 2\Bott(1) + \bigg(S^+_1 + \sum_{\theta \in (0,\pi)} (S^+_{e^{\sqrt{-1}\theta}}-S^-_{e^{\sqrt{-1}\theta}}) - S^+_{-1}\bigg) + 2(S^+_1+S^+_{-1}) - (\nu_1+\nu_{-1}) \\
& = 2\Bott(1) + 2S^+_1 + \sum_{\theta \in (0,\pi)} (S^+_{e^{\sqrt{-1}\theta}}-S^-_{e^{\sqrt{-1}\theta}}) - (\nu_{1}-S^+_{1}) - (\nu_{-1}-S^+_{-1}) \\
& \geq 2\Bott(1) - \bigg(\sum_{\theta \in (0,\pi)} S^-_{e^{\sqrt{-1}\theta}} + (\nu_{1}-S^+_{1}) + (\nu_{-1}-S^+_{-1})\bigg) \\
& \geq n+1,
\end{align*}
where the first equality follows from Bott's formula, \eqref{eq:bott2} and \eqref{eq:bott3}, the second equality holds by \eqref{eq:splitting1} and \eqref{eq:bott via splitting}, the first inequality follows from the fact that the splitting numbers are non-negative and the last inequality is a consequence of \eqref{eq:bound} and the hypothesis of the lemma.
\end{proof}

\section{Proof of Theorem \ref{thm:example}}
\label{sec:proof main result}

Let $F: \R^2 \to \R$ be defined as $F(q,p)=\pi(q^2+p^2)$. Given $\ep>0$ consider the Hamiltonian $G: \R^{2n} \to \R$ given by
\[
G(q_1,p_1,\dots,q_n,p_n) = - \big( F(q_1,p_1) + F(q_2,p_2)^2 + \sum_{i=3}^n \ep F(q_i,p_i) \big).
\]
Let $f: \R \to \R$  be a smooth function such that
\begin{itemize}
\item $f(0)=1$, $f'(0)=1$;
\item $f'(r) \in (0,1)$ and $f''(r)>0$ for every $r\in (-r_0,0)$ and some $r_0>0$;
\item $f(r)=C$ for every $r\leq -r_0$, where $C$ is a constant bigger than $1/2$.
\end{itemize}
Define the Hamiltonian
\[
H=f \circ G,
\]
where the constants $\ep$ and $r_0$ will be properly chosen. 

\begin{remark}
\label{rmk:H C1 close to 1}
Note that, choosing $\ep<1$ and $r_0$ very small and $C$ close enough to 1, we can make $H$ arbitrarily uniformly $C^1$-close to the constant function equal to one.
\end{remark}

Consider the standard contact sphere $(S^{2n+1},\xi)$ as the prequantization circle bundle of $\CP^n$ with connection form $\beta$ and projection $\pi: S^{2n+1} \to \CP^n$. Take $x_0 \in \CP^n$ and a neighborhood $U$ of $x_0$ with Darboux coordinates $(q_1,p_1,\dots,q_n,p_n)$ identifying $x_0$ with the origin. Taking $r_0$ sufficiently small and viewing $H$ as an Hamiltonian on $U$, extend $H$ to $\CP^n$ setting $H|_{\CP^n\setminus U}\equiv C$. Define the contact form
\[
\alpha=\beta/\hat H,
\]
where $\hat H=H\circ\pi$. The Reeb vector field of $\alpha$ is given by
\begin{equation}
\label{eq:Reeb}
R_\alpha=\hat HR_{\beta}+ \hat X_{H},
\end{equation}
where $R_{\beta}$ is the Reeb vector field of $\beta$ (whose flow generates a freee circle action with minimal period one) and $\hat X_{H}$ is the horizontal lift of the Hamiltonian vector field of $H$. By the construction of $H$, clearly $W:=\pi^{-1}(U)$ is invariant under the Reeb flow of $\alpha$ and outside $W$ the Reeb vector field of $\alpha$ is a constant multiple of $R_{\beta}$.

Let $\ga_0$ be the simple closed orbit of $R_\alpha$ over $x_0$. The next lemma establishes that $\alpha$ is dynamically convex except at $\ga_0$. 

\begin{lemma}
\label{lemma:dynconvex}
Every periodic orbit $\ga$ of $\alpha$ distinct from $\ga_0$ satisfies $\mu(\ga)\geq n+2$. Moreover, $\mu(\ga_0)=n$.
\end{lemma}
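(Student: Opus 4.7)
The plan is to classify the closed Reeb orbits of $\alpha$ by their image under the projection $\pi\colon S^{2n+1}\to\CP^n$ and then compute $\mu$ in each case. Since $R_\alpha=\hat H R_\beta+\hat X_H$ projects to $X_H$, any closed Reeb orbit of $\alpha$ projects either to a critical point of $H$ on $\CP^n$ or to a non-trivial closed orbit of $X_H$ inside $U$. The critical set of $H$ consists of $x_0$ together with the ``bulk'' region $\{G\le -r_0\}$ on which $H\equiv C$. Inside $U$ one has $X_H=f'(G)X_G$ with $f'(G)>0$, so the non-trivial orbits of $X_H$ coincide set-wise with those of $X_G$. This produces three families of closed Reeb orbits: (a) $\gamma_0$ itself; (b) Hopf fibers over the bulk region; (c) orbits whose $\pi$-projection is a non-trivial closed orbit of $X_G$ in $U$.

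For family (a), along $\gamma_0$ one has $\hat X_H=0$ and $\hat H=1$, so $R_\alpha=R_\beta$ and $\gamma_0$ is a Hopf fiber of period $1$. Fix a capping disk in $S^{2n+1}$ and a trivialization of $\xi$ extending over it. The linearized Reeb flow along $\gamma_0$ then splits as the background Hopf linearization (whose Conley--Zehnder index equals the standard Hopf value $n+2$ in the paper's normalization, since the round sphere is convex and the simple Hopf orbit attains the minimum $\mu=n+2$) composed with the Hessian contribution $\exp(tJ\,d^2H(x_0))$. By the chain rule, $dG(0)=0$ and $f'(0)=1$ give $d^2H(x_0)=d^2G(x_0)=(-2\pi I_2)\oplus 0_2\oplus(-2\pi\epsilon I_2)^{\oplus(n-2)}$. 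Applying Bott additivity block by block: in the $(q_1,p_1)$-block the Hessian performs a full $2\pi$ turn in the direction opposite to the Hopf rotation, so the two cancel, leaving a degenerate identity path whose lower semicontinuous CZ contribution is exactly $2$ below the unperturbed Hopf value on this block; the $(q_2,p_2)$-block is unperturbed (zero Hessian); and each $(q_i,p_i)$-block with $i\ge 3$ is only slightly perturbed (total angle $2\pi(1-\epsilon)\in(0,2\pi)$, non-degenerate), with the same CZ contribution as the Hopf block. Summing gives $\mu(\gamma_0)=(n+2)-2=n$.

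For family (b), $R_\alpha=CR_\beta$ and $d^2H\equiv 0$ on the bulk region, so the linearized Reeb flow is the background Hopf path (time rescaled by $C$), giving $\mu=n+2$ for simple orbits and $\mu\ge n+2$ for iterates by the monotonicity inequality \eqref{eq:index iteration}. For family (c), the key observation is that $X_G$ rotates each $(q_i,p_i)$-block in the direction opposite to the Hopf rotation, with constant angular velocities $-2\pi$ and $-2\pi\epsilon$ on the $(q_1,p_1)$-block and the $(q_i,p_i)$-blocks for $i\ge 3$ respectively, and with the radially varying angular velocity $-4\pi^2(q_2^2+p_2^2)$ on the $(q_2,p_2)$-block. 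For $\epsilon$ and $r_0$ sufficiently small, the total rotation accumulated along any closed $X_G$-orbit contained in $U$ remains strictly less than a full $2\pi$-turn in every block, so the linearized Reeb path stays a non-degenerate small perturbation of the Hopf baseline and retains $\mu\ge n+2$; iterates are again controlled by \eqref{eq:index iteration}.

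The main obstacle is family (c): the closed $X_G$-orbits form a continuum of resonant invariant tori of an integrable system, so one must establish a \emph{uniform} lower bound $\mu\ge n+2$ over the whole family, including iterated orbits whose cumulative rotations could in principle produce a second cancellation with the Hopf rotation of the kind that drove $\mu(\gamma_0)$ down by $2$. The $(q_2,p_2)$-block is the subtlest, since its angular velocity vanishes at the origin and grows quadratically with the radius; the strict convexity $f''>0$ on $(-r_0,0)$, together with the smallness of $\epsilon$ and $r_0$, is precisely what rules out low-index orbits in this block. A minor technical point in (a) is to justify the simultaneous block decomposition of the background Hopf path and $\exp(tJ\,d^2G(x_0))$, which requires choosing the trivialization of $\xi$ over the capping disk to respect the symplectic splitting of the Hessian at $x_0$.
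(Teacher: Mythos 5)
Your decomposition of the closed orbits into three families by their projections to $\CP^n$ is the right starting point and matches the paper's setup, and your family (a) computation of $\mu(\gamma_0)=n$ is essentially correct (though you should also handle iterates $\gamma_0^k$ with $k\ge 2$, which project to the same constant — the paper does this explicitly; a patch via \eqref{eq:index iteration} is possible but requires a nullity count). The genuine gap is in family (c). Your central claim there — that for $\ep$ and $r_0$ small the total rotation accumulated along any closed $X_G$-orbit in $U$ stays strictly below one full $2\pi$ turn in every block — is false. A closed orbit of $X_G$ inside $U_0$ requires the three frequencies $1$, $2F(z_2)$, $\ep$ to be commensurable, and its minimal period $T_G$ is then the common multiple, which can be \emph{arbitrarily large} (e.g.\ take $2F(z_2)=p/q$ with $q$ large); in the $(q_1,p_1)$-block the accumulated rotation is exactly $2\pi T_G$, i.e.\ $T_G$ full turns. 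So the linearized flow is never a small perturbation of the Hopf baseline, and there is no baseline "simple" orbit to feed into \eqref{eq:index iteration}: you explicitly acknowledge this obstacle but do not resolve it.

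The paper's resolution is of a completely different nature, and you cannot avoid it. One passes from the Darboux trivialization to the capping-disk trivialization via $\mu(\gamma,\Psi)=\mu(\gamma,\Phi)+q(2n+2)$ (Lemma \ref{lemma:triv}), where $q\in\Z$ is the winding number of $\gamma$ around the fiber, and this $q$ coincides with the Hamiltonian action $A_H(\gamma_H)$. The rotation contributions in each block do grow linearly in $T_G$, producing a negative term bounded below by roughly $-2T_G(1+2F+(n-2)\ep)$, but the positive term $q(2n+2)$ dominates because of the crucial action estimate $A_H(\gamma_H)>T_G$ coming from the convexity of $f$ (so $\lceil T_G\rceil/q\le 1$), together with $q\ge 2$ (since the action is an integer greater than $1$; Lemma \ref{lemma:action}). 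Without this comparison between the winding number and the period, there is no way to obtain a uniform bound $\mu\ge n+2$ over the whole resonant family — the convexity and smallness of $\ep,r_0$ enter only through this action inequality, not through a "small rotation" argument as you suggest.
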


This lemma and other lemmas below, which the proof of the theorem relies on and are not readily availabe in the literature, are proved in Section \ref{sec:technical}.

Now, fix a section of the determinant line bundle $\bigwedge^n_{\C}\xi$. Using this section, we can define the mean index $\mi(\eta)$ for all finite segments $\eta$ of Reeb orbits, not necessarily closed, for any contact form on $(S^{2n+1},\xi)$. This index depends continuously on the initial condition and the contact form (in the $C^2$-topology), and for closed orbits it agrees with the standard mean index defined using trivializations of $\xi$ over capping disks. We say that a contact form $\alpha$ is \emph{index-positive} if there are constants $b>0$ and $c$ such that
\[
\mi(\eta) \geq bT+c
\]
for every Reeb segment $\eta: [0,T] \to M$ of $\alpha$.

\begin{lemma}
\label{lemma:index+}
The contact form $\alpha$ is index-positive.
\end{lemma}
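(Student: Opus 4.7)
My approach relies on the prequantization structure of $\alpha$ to split the mean index of a Reeb segment into a large positive contribution from rotation along the Hopf fibers plus a controlled correction from the horizontal Hamiltonian motion on $\CP^n$. The Reeb vector field decomposes as $R_\alpha = \hat H R_\beta + \hat X_H$, with the first term vertical and the second horizontal. Using the natural identification $\xi \cong \pi^*T\CP^n$ coming from the connection $\beta$, I would take the section of $\bigwedge^n_\C\xi$ used in the definition of $\mi$ to be pulled back from a non-vanishing section of $K_{\CP^n}^{-1}$, so that the mean index of a Reeb segment $\eta\colon[0,T]\to S^{2n+1}$ measures the asymptotic rotation rate of the linearized Reeb flow in this fixed trivialization.

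For the vertical part, one period of the pure $R_\beta$-flow covers one Hopf fiber and rotates the determinant line $\bigwedge^n_\C\xi$ by $n+1$ turns (since $c_1(\CP^n)=n+1$ in the usual normalization), so the mean index density contributed by $R_\beta$ is $2(n+1)$ per unit of $R_\beta$-time. Since one unit of $R_\alpha$-time corresponds to $\hat H(\eta(t))$ units of $R_\beta$-time along $\eta$, and since by construction $\hat H \geq C$ everywhere on $S^{2n+1}$, the vertical contribution to $\mi(\eta)$ is at least
\[
2(n+1)\int_0^T \hat H(\eta(s))\,ds \;\geq\; 2(n+1)\,C\,T,
\]
a linear function of $T$ with slope strictly greater than $n+1$ since $C>1/2$.

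To make rigorous the decomposition and bound the horizontal correction, I would apply Theorem \ref{thm:comparison} to compare the symplectic path $\Ga_\eta\colon[0,T]\to\Sp(2n)$ generated by $R_\alpha$ along $\eta$ with the purely vertical path generated by $\hat H R_\beta$: in a frame along $\pi\circ\eta$ obtained by parallel transport with the Chern connection on $\CP^n$, the two infinitesimal generators differ by a path of symmetric operators whose norm is bounded by $\|d^2H\|_\infty$, so the corresponding Bott functions differ by at most $DT+E$ for constants $D,E$ depending only on $\|d^2H\|_\infty$. This yields $\mi(\eta)\geq(2(n+1)C-D)T-E$, and by Remark \ref{rmk:H C1 close to 1} together with the explicit form of $H=f\circ G$, we may ensure $D<2(n+1)C$ by taking $\epsilon$ and $r_0$ small enough, yielding the required index positivity. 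The main obstacle is precisely this decomposition, since mean indices are not additive under concatenation of non-commuting flows; my plan circumvents this by working throughout in a single good trivialization and invoking Theorem \ref{thm:comparison} once to dominate $\Ga_\eta$ from below by the reference vertical path, rather than trying to split $\Ga_\eta$ into literal factors.
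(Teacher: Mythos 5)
Your plan takes a genuinely different route from the paper's proof: you work directly with Reeb segments, splitting the mean index into a vertical (fiber-rotation) part and a horizontal (Hamiltonian) part, and control the horizontal part via the comparison Theorem \ref{thm:comparison}. The paper instead first shows (for $\epsilon$ rational) that periodic orbits are dense, reduces to periodic orbits via continuity of the mean index, and then computes $\mi(\gamma)=\lim_k \mu(\gamma^k)/k$ from the explicit closed-orbit index formula \eqref{eq:index bga} established in the proof of Lemma \ref{lemma:dynconvex}.

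Two smaller issues first. The section of $\bigwedge^n_\C\xi$ cannot be ``pulled back from a non-vanishing section of $K_{\CP^n}^{-1}$'': since $c_1(K_{\CP^n}^{-1})=n+1\neq 0$ no such section over $\CP^n$ exists; what is true is that the pullback bundle $\pi^*K_{\CP^n}^{-1}$ over $S^{2n+1}$ is trivial. Also, the fiber winding rate is $\hat H-\lambda(X_H)$, not $\hat H$ (cf.\ \eqref{eq:action}); your bound survives only because $\lambda(X_H)\le 0$, which uses the negative-definiteness of $G$, a structural fact you do not invoke.

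The serious gap is in the horizontal estimate. You claim the Bott-function discrepancy constant $D$ depends only on $\|d^2 H\|_\infty$ and can be made less than $2(n+1)C$ by shrinking $\epsilon,r_0$. But $\|d^2 H\|_\infty$ does not shrink as $\epsilon,r_0\to 0$: already $\|d^2 H(0)\|=\|d^2G(0)\|=2\pi$, coming from the $(q_1,p_1)$-block, and in fact $f''$ grows as $r_0\to 0$. A crude comparison with $\pm\|d^2H\|_\infty\cdot\mathrm{Id}$ only gives $D$ of order $n\|d^2H\|_\infty/\pi\approx 2n$, i.e.\ a coefficient that grows with $n$, whereas the paper obtains the much sharper bound $D\approx 2(1+2r_0+(n-2)\epsilon)$. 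The reason the true $D$ is $O(1)$ rather than $O(n)$ is that $d^2G$ has exactly one block of size $2\pi$, one block of size $O(r_0)$ (from $F(z_2)^2$), and $n-2$ blocks of size $2\pi\epsilon$; the spectral separation is essential and a sup-norm estimate destroys it. The paper captures this spread through the explicit rotation structure \eqref{eq:flow G} of the $G$-flow, and separately handles the reparametrization by $f$ via Lemma \ref{lemmma:indexes HS} (showing it moves the index by at most one, so the potentially large $f''$-contribution is harmless). Your plan, as stated, has no mechanism to recover either of these refinements, so the inequality $D<2(n+1)C$ does not follow from ``taking $\epsilon$ and $r_0$ small enough.''
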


Clearly, $\alpha$ is invariant under the Reeb flow of $\beta$. In particular, it is antipodally symmetric (the antipodal map is given by the time $1/2$ map of the flow of $R_{\beta}$). Let $\balpha$ be the induced form on $\RP^{2n+1}$ and $\bga_0: [0,1/2] \to \RP^{2n+1}$ be the simple closed orbit of $\balpha$ such that its second iterate is the projection of $\ga_0$. Denote by $\bar\beta$ the connection form on $\RP^{2n+1}$ induced by $\beta$ and let $\bar\pi: \RP^{2n+1} \to \CP^n$ be the projection. Clearly,
\begin{equation}
\label{eq:Reeb'}
R_{\balpha}=\hat H'R_{\bar\beta}+ \hat X_{H}',
\end{equation}
where $\hat H'= H\circ\bar\pi$ and $\hat X_{H}'$ is the horizontal lift of the Hamiltonian vector field of $H$ with respect to the connection form $\bar\beta$.

Shrinking $U$ if necessary, consider coordinates $(q_1,p_1,\dots,q_n,p_n,\theta)$ on $\bar W:=(\bar\pi)^{-1}(U) \simeq U \times \R/\frac 12\Z$ such that
\[
\bar\beta|_{\bar W}=\lambda+d\theta,
\]
where $\lambda=\frac 12\sum_{i=1}^n (q_idp_i-p_idq_i)$ is the Liouville form. Consider the section $\Sigma=U \times \{0\}$ transversal to $\bga_0$ and take a possibly smaller section $S=W \times \{0\}$, where $W \subset U$ is an open subset containing $x_0$. Let $P: S \to P(S)$ be the corresponding first return map.

We will also consider the following coordinates on a neighborhood of $\bga_0$.

\begin{lemma}\cite[Lemma 5.2]{HM}
\label{lemma:coords}
There exist a neighborhood $V \simeq U' \times \R/\frac 12\Z$ of $\bga_0$, where $U' \subset \R^{2n}$ is a small neighborhood of the origin, and coordinates $(q_1',p_1',\dots,q_n',p_n',t)$ on $V$ such that
\[
\balpha|_V = \lambda' + H_t dt,
\]
where $\lambda'=\frac 12\sum_{i=1}^n (q_i'dp_i'-p_i'dq_i')$ is the Liouville form and $H_t: U' \to \R$ is a $1/2$-periodic Hamiltonian such that $H_t(0)=1$ and $dH_t(0)=0$.
\end{lemma}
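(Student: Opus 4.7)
The plan is to use standard normal form techniques for contact one-forms near a simple closed Reeb orbit, combining an initial tubular neighborhood identification with a Moser-type straightening argument. The target is to realize a neighborhood of $\bga_0$ as a tube $U'\times \R/\frac{1}{2}\Z$ on which $\balpha$ takes the displayed form.

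First I would build a preliminary diffeomorphism $\Phi: U'\times \R/\frac{1}{2}\Z \to V$ sending the zero section $Z:=\{0\}\times \R/\frac{1}{2}\Z$ onto $\bga_0$. Since $\bga_0$ has period $1/2$, parametrize it by its Reeb time, so that $\dot{\bga}_0=R_{\balpha}$. The contact distribution $\xi=\ker\balpha$ restricted to $\bga_0$ is a symplectic vector bundle over $\R/\frac{1}{2}\Z$ with symplectic form $d\balpha|_\xi$; choose a symplectic trivialization of this bundle to obtain linear coordinates $(q_1',p_1',\dots,q_n',p_n')$ on each fiber such that $d\balpha|_\xi=\sum_i dq_i'\wedge dp_i'$ along $Z$. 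Using any auxiliary Riemannian exponential map transverse to $\bga_0$, extend these fiberwise coordinates to the diffeomorphism $\Phi$ on a tubular neighborhood of $Z$.

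Next, set $\alpha_0 := \Phi^*\balpha$, let $H_t(x) := \alpha_0|_{(x,t)}(\partial/\partial t)$, and define the model $\alpha_1 := \lambda' + H_t\,dt$. The Reeb condition $\dot{\bga}_0=R_{\balpha}$ along the orbit forces $\alpha_0|_Z(\partial/\partial t)=1$ and $i_{\partial/\partial t}d\alpha_0|_Z=0$, which translate directly into $H_t(0)=1$ and $dH_t(0)=0$. By construction $\alpha_0$ and $\alpha_1$ agree along $Z$, and their exterior derivatives agree on vectors tangent to the transverse slice at points of $Z$. Applying Moser's trick to the path $\alpha_s := (1-s)\alpha_0 + s\alpha_1$ on a possibly smaller neighborhood on which each $\alpha_s$ remains contact, one solves for a time-dependent vector field $X_s$ vanishing along $Z$ whose flow $\psi$ satisfies $\psi^*\alpha_1=\alpha_0$. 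Composing $\Phi$ with $\psi^{-1}$ yields the required coordinates.

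The main technical obstacle is ensuring the Moser vector field $X_s$ genuinely vanishes along $Z$, so that $\psi$ pointwise fixes $\bga_0$ and the resulting coordinates correctly identify the zero section with the Reeb orbit. This reduces to checking that $\alpha_1-\alpha_0$ vanishes along $Z$ together with matching of first-order data in the transverse directions, both guaranteed by the careful symplectic trivialization chosen in the first step; the inversion of the Moser equation $i_{X_s}d\alpha_s + dh_s = \alpha_0-\alpha_1$ with $h_s$ vanishing along $Z$ is then routine on the slice.
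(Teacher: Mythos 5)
The paper does not prove this lemma; it is quoted directly from Hryniewicz--Macarini \cite[Lemma 5.2]{HM}, so there is no in-paper proof to compare against. Evaluating your sketch on its own terms: the overall template --- tubular neighborhood with a symplectic trivialization of $\xi|_{\bga_0}$ matching first-order data, followed by a Moser deformation from $\alpha_0 = \Phi^*\balpha$ to the model $\alpha_1 = \lambda' + H_t\,dt$ --- is the right strategy, and the first-order checks ($H_t(0)=1$, $dH_t(0)=0$, $\alpha_0-\alpha_1$ vanishing along $Z$) are correctly identified. However, the claim that ``the inversion of the Moser equation $\dots$ is then routine on the slice'' glosses over the genuinely hard point, and as written the argument has a gap.

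Concretely, the Moser equation $L_{X_s}\alpha_s = \alpha_0 - \alpha_1$ splits, upon writing $X_s = h_s R_{\alpha_s} + Y_s$ with $Y_s \in \ker\alpha_s$, into the algebraic part $i_{Y_s} d\alpha_s|_{\ker\alpha_s} = (\alpha_0-\alpha_1-dh_s)|_{\ker\alpha_s}$ (which is indeed routine, by nondegeneracy) and the \emph{compatibility ODE along Reeb trajectories}
\[
R_{\alpha_s}\,h_s \;=\; (\alpha_0-\alpha_1)(R_{\alpha_s}).
\]
This is not an equation on a single transverse slice; it is a first-order PDE on the whole tube, and solving it smoothly near the closed orbit requires that the right-hand side integrate to zero along every closed $R_{\alpha_s}$-trajectory contained in the neighborhood. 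One has $(\alpha_0-\alpha_1)(R_{\alpha_s})\equiv 0$ along $Z$ itself, so the monodromy obstruction vanishes on $\bga_0$, but if $\bga_0$ is degenerate (which is exactly the situation of interest in this paper: the perturbed orbit $\ga_0^{\ep'}$ is degenerate) there may be nearby closed Reeb orbits of $\alpha_s$ accumulating on $Z$, and for those the integral $\int_{\gamma}(\alpha_0-\alpha_1)$ is not automatically zero. Setting $h_s = 0$ on a transverse slice and ``integrating along the flow'' then produces a function that fails to close up. You must either (i) work in coordinates adapted to the Reeb flow so the holonomy condition is built in, or (ii) exploit more carefully the freedom in $H_t$ (the target $\alpha_1$ need not have $H_t = \alpha_0(\partial_t)$ for the original $H_t$; after the Moser isotopy the $dt$-coefficient changes, and this freedom is precisely what makes the normal form attainable), or (iii) follow the actual argument in \cite{HM}. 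As it stands, the final sentence asserting routineness is where the proof stops being a proof.
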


Consider, with respect to the coordinates given by the previous lemma, the section $\Sigma'=U'\times \{0\}$ and let $S'$ be a possibly smaller section $S'=B \times \{0\}$ with $B \subset U'$ an open subset containing the origin. Let $P': S' \to P(S')$ be the corresponding first return map. Shrinking $S'$ if necessary, we have that $P'$ is a well defined symplectic diffeomorphism. The following lemma is well known and therefore we will omit its proof.

\begin{lemma}
\label{lemma:returns}
Shrinking $S$ and $S'$ if necessary, we have that $P$ are $P'$ are symplectically conjugate.
\end{lemma}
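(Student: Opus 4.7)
The plan is to construct the required conjugation directly by flowing along the Reeb vector field $R_{\balpha}$ from $\Sigma$ to $\Sigma'$. Both sections pass through the common point $y_0:=\bga_0(0)\in\RP^{2n+1}$, being transverse codimension-one disks to the closed orbit $\bga_0$ in two different local charts. By construction $P$ and $P'$ are the first return maps to these sections for the same ambient Reeb flow on $(\RP^{2n+1},\balpha)$, and since $\iota_{R_{\balpha}}d\balpha=0$ and the flow of $R_{\balpha}$ preserves $d\balpha$, each is automatically a symplectomorphism for the restriction $d\balpha|_{\Sigma}$, respectively $d\balpha|_{\Sigma'}$. Computing these forms in the two charts, one finds $\balpha|_{\Sigma}=\lambda/\hat H'|_\Sigma$ (since $d\theta$ vanishes on $\Sigma=U\times\{0\}$ because $\theta$ is constant there) and $\balpha|_{\Sigma'}=\lambda'$ by Lemma \ref{lemma:coords} (since $dt$ vanishes on $\Sigma'=U'\times\{0\}$), so the two symplectic structures are $d(\lambda/\hat H'|_\Sigma)$ and $d\lambda'$.

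Next I would define the conjugating map $\Psi:\Sigma\to\Sigma'$ as follows. Since $R_{\balpha}$ is transverse to both sections at $y_0$ (the $\partial_\theta$-component of $R_{\balpha}$ equals $\hat H'>0$ in the first chart, and $R_{\balpha}(0,t)=\partial_t$ in the second), the implicit function theorem yields a unique smooth function $\tau$ on a neighborhood of $y_0$ in $\Sigma$, with $\tau(y_0)=0$, such that $\phi^{R_{\balpha}}_{\tau(x)}(x)\in\Sigma'$ for every $x$ in this neighborhood. Setting $\Psi(x):=\phi^{R_{\balpha}}_{\tau(x)}(x)$ gives a local diffeomorphism $\Sigma\to\Sigma'$ sending $y_0$ to itself. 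A standard chain-rule computation starting from the decomposition $T_x\Psi(v)=T_x\phi^{R_{\balpha}}_{\tau(x)}(v)+d\tau_x(v)\,R_{\balpha}(\Psi(x))$, using $\iota_{R_{\balpha}}d\balpha=0$ to kill the $d\tau$ correction and $(\phi^{R_{\balpha}}_s)^*d\balpha=d\balpha$ for every $s$, then yields $\Psi^*(d\balpha|_{\Sigma'})=d\balpha|_\Sigma$, so $\Psi$ is a local symplectomorphism between the two sections.

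Finally, the identity $\Psi\circ P=P'\circ\Psi$ follows because both compositions describe the same geometric operation: starting from $x\in S$, flow forward by $R_{\balpha}$ and record the first crossing of $\Sigma'$ obtained after completing one circuit along $\bga_0$; uniqueness of this first crossing shows the two compositions agree. Continuous dependence of the various return times on the initial condition allows one to shrink $S$ and $S'$ so that $P$, $P'$, $\Psi$ and $\Psi^{-1}$ are all simultaneously defined on the relevant open sets and the two compositions coincide there. The only mildly delicate point is this bookkeeping of shrinking sections consistently; no genuine analytic obstacle arises beyond the transversality of $R_{\balpha}$ to both sections at $y_0$, which is immediate from the coordinate descriptions.
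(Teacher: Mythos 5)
Your proof is correct and supplies exactly the standard ``flow between transversals'' argument that the paper declares well known and therefore omits; since the paper gives no proof, there is no approach to compare against. Two small imprecisions worth flagging, neither of which affects the conclusion: first, the $\partial_\theta$-component of $R_{\balpha}$ in the chart $(x,\theta)$ is $\hat H' - \lambda(X_H)$, not $\hat H'$ (horizontality of $\hat X_H'$ means $\bar\beta(\hat X_H')=0$, hence $d\theta(\hat X_H')=-\lambda(X_H)$); this still equals $1$ at $\bga_0(0)$ because $\lambda$ vanishes at the origin, so transversality holds as you assert. Second, Lemma \ref{lemma:coords} does not stipulate that the $t=0$ slice in the second chart meets $\bga_0$ at the same parameter value as the $\theta=0$ slice in the first chart, so one should either normalize the coordinate $t$ so that $\Sigma$ and $\Sigma'$ share the point $\bga_0(0)$, or simply allow $\tau(y_0)$ to be a fixed small nonzero offset; the chain-rule computation showing $\Psi^*(d\balpha|_{\Sigma'})=d\balpha|_{\Sigma}$ and the uniqueness argument for $\Psi\circ P = P'\circ\Psi$ go through verbatim in either case.
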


Thus, up to a change of coordinates, we can assume that $P'=P$. $P'$ is given by the time $1/2$ map of $H_t$ but $P$ \emph{is not} the time $1/2$ map of $H$; see Section \ref{sec:proof no new orbit}. (Note the difference between the time-dependent Hamiltonian $H_t$ and the autonomous Hamiltonian $H$.)

Now, consider the Hamiltonian
\[
S(q_1',p_1',\dots,q_n',p_n') = \frac 12(p_1'^2+p_2'^2)
\]
whose flow is given by
\[
\vr^S_t(q_1',p_1',\dots,q_n',p_n')=(S^t(q_1',p_1'),S^t(q_2',p_2'),q_3',p_3',\dots,q_n',p_n'),
\]
where $S^t$ is the symplectic shear
\[
S^t = \left(
\begin{matrix}
1 & t \\
0 & 1
\end{matrix} \right).
\]
Take $0< r' < r'' < \sup\{-G(x);\,x\in B\}$ and define  $B'=\{x\in B;\,-G(x)<r'\}$ and $B''=\{x\in B;\,-G(x)<r''\}$. Let $\bump: B \to \R$ be a bump function such that $\bump|_{B'}\equiv 1$ and $\bump|_{B\setminus B''}\equiv 0$. Take a non-decreasing function $\chi: [0,1/2] \to [0,1/2]$ such that $\chi(t)=0$ for every $t \in [0,\delta]$ and $\chi(t)=1/2$ for every $t \in [1/2-\delta,1/2]$ for some small $\delta>0$. Given $\ep'>0$ sufficiently small consider the $C^2$-small perturbation of $H_t$
\[
H^{\ep'}_t(x)=H_t(x) + \ep'\chi'(t)\bump(x)S((\vr^{H_t}_{t})^{-1}(x))
\]
for $t \in [0,1/2]$ and $H^{\ep'}_{t+1/2}=H^{\ep'}_t$ for every $t$. Taking $\ep'$ sufficiently small, consider the contact form $\balpha_{\ep'}$ on $\RP^{2n+1}$ given by
\[
\balpha_{\ep'}|_V=\lambda' + H^{\ep'}_tdt
\]
and $\balpha_{\ep'} = \balpha$ away from $B \times \R/\frac 12 \Z$.

Let $\alpha_{\ep'}$ be the lift of $\balpha_{\ep'}$ to $S^{2n+1}$ and $\ga^{\ep'}_0$ be the simple periodic orbit of $\alpha_{\ep'}$ whose image coincides with that of $\ga_0$.

\begin{lemma}
\label{lemma:no new orbit}
There exist a neighborhood of $\ga_0$, $\tau>0$ and $\ep_0>0$ such that, for every $\ep'<\ep_0$, the orbit $\ga^{\ep'}_0$ is the only periodic orbit of $\alpha_{\ep'}$ entirely contained in this neighborhood with period $T \in (1-\tau,1+\tau)$.
\end{lemma}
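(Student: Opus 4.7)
The plan is to reduce the statement to a question about fixed points and $2$-periodic orbits of the first return map $P^{\ep'}$ of $\balpha_{\ep'}$ on the Poincaré section $S'$ from Lemma~\ref{lemma:coords}. A closed orbit of $\alpha_{\ep'}$ lying in a small tubular neighborhood of $\ga_0$ with period in $(1-\tau,1+\tau)$ descends to a closed orbit of $\balpha_{\ep'}$ near $\bga_0$ whose period is close to either $1/2$ (if the lift to $S^{2n+1}$ is antipodally symmetric) or $1$ (if it is not). Such orbits correspond respectively to fixed points of $P^{\ep'}$ near the origin of $S'$, or to genuine $2$-periodic orbits of $P^{\ep'}$ near the origin. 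It therefore suffices to show that for all $\ep'$ sufficiently small and in some fixed neighborhood of the origin in $S'$, the origin is the unique fixed point of $P^{\ep'}$ and the unique fixed point of $(P^{\ep'})^2$.

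The first technical step is an explicit formula for $P^{\ep'}$. Setting $\phi_t := \vr^{H_t}_t$, the conjugation $\Psi^{\ep'}_t := \phi_t^{-1} \circ \vr^{H^{\ep'}_t}_t$ is a Hamiltonian isotopy whose time-dependent generating Hamiltonian equals $(H^{\ep'}_t - H_t)\circ \phi_t = \ep'\chi'(t)\,(\bump\circ\phi_t)\cdot S$ --- the factor $(\vr^{H_t}_t)^{-1}$ built into the definition of $H^{\ep'}_t$ is inserted precisely so that it cancels after this conjugation. For $y$ in a sufficiently small neighborhood of the origin, $\phi_t(y)$ remains in $B'$ for every $t\in[0,1/2]$ and $\bump\circ\phi_t \equiv 1$ along the trajectory, so the generating Hamiltonian reduces to $\ep'\chi'(t)\,S$; since $S$ is time-independent the time-$1/2$ flow integrates to $\Psi^{\ep'}_{1/2} = \vr^S_{\ep'/2}$, and one concludes
\[
P^{\ep'} = P\circ \vr^S_{\ep'/2}
\]
on a uniform neighborhood of the origin.

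A spectral analysis using $H = f\circ G$, $f(0) = 1 = f'(0)$, and the block-diagonal Hessian $d^2G(0) = \mathrm{diag}(-2\pi I,\,0,\,-2\pi\ep I,\dots)$ shows that $dP(0)$ has $1$ as its only unit eigenvalue, with algebraic multiplicity $2$, coming from the $(q_2,p_2)$-block where $F_2^2$ is of order four. The same persists for $dP^{\ep'}(0) = dP(0)\cdot d\vr^S_{\ep'/2}(0)$; for the second iterate the only unit eigenvalue has multiplicity $4$, and all other eigenvalues remain bounded away from $1$ uniformly in small $\ep'$. An implicit function theorem on the hyperbolic complement then reduces the existence of fixed points (respectively, $2$-periodic points) of $P^{\ep'}$ near the origin to a problem on a continuously varying $2$-dimensional (respectively $4$-dimensional) invariant slice $W^{\ep'}$ through the origin.

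The remaining dynamical analysis on $W^{\ep'}$ exploits the twist nondegeneracy of the unperturbed map. On the $(q_2,p_2)$-plane the unperturbed $P$ equals the rotation $(q_2,p_2) \mapsto R_{\theta_2(F_2)}(q_2,p_2)$ with twist function $\theta_2(F_2) = -2\pi f'(-F_2^2)\,F_2$; the derivative $\theta_2'(0) = -2\pi \neq 0$ makes this a monotone twist with the origin as its only fixed point and no short periodic orbits in a fixed neighborhood. On the $(q_1,p_1)$-plane $P$ is rotation by the angle $-\pi f'(-F_1)$, which stays in $(-\pi,0)$ for small $F_1 > 0$ and avoids both $0$ and $\pm 2\pi$, so neither $P$ nor $P^2$ has a fixed point there besides the origin. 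Combined with the $O(\ep')$-smallness of the unipotent shear $\vr^S_{\ep'/2}$, these monotone twist structures persist on $W^{\ep'}$ and force the origin to be the unique fixed point of $P^{\ep'}$ and to admit no $2$-periodic orbits in a uniform neighborhood for $\ep'$ sufficiently small. The main obstacle is the $4$-dimensional slice analysis for $(P^{\ep'})^2$: one must verify, via a Birkhoff-type normal form and the symplectic conjugation of Lemma~\ref{lemma:returns}, that the nonzero leading-order twist coefficients from both the $(q_1,p_1)$- and $(q_2,p_2)$-blocks transfer to the center coordinates on $W^{\ep'}$ and uniformly dominate the $\ep'$-shear contribution.
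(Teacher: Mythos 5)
Your starting point matches the paper's: both reduce the lemma to showing that the only $2$-periodic orbit of the first return map $P_{\ep'}$ near the origin is the origin itself, and both exploit the same decomposition $P_{\ep'} = P\circ\vr^S_{\ep'/2}$ obtained by conjugating the perturbed flow by $\vr^{H_t}_t$ (the paper writes this as $P_{\ep'}^2 = (P\circ\vr^{\S_t}_{1/2})^2$, which is the same thing). Your derivation of this formula is correct. But from that point on there is a genuine gap.

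First, a factual/terminological problem: $dP(0)$ does \emph{not} have $1$ as ``its only unit eigenvalue.'' All eigenvalues of $dP(0)$ lie on the unit circle --- it is the return map of a convex-type Reeb flow with totally elliptic linearization at $\ga_0$. Concretely, from the linearized flow $R(t)\oplus\Id_2\oplus R_\ep(t)\oplus\cdots$ over $\ga_0\equiv 0$, one gets $dP(0)=-\Id_2\oplus\Id_2\oplus e^{\pi\ep\sqrt{-1}}I_2\oplus\cdots$, with eigenvalues $-1,-1,1,1,e^{\pm\pi\ep\sqrt{-1}},\dots$. There is no hyperbolic complement, so ``an implicit function theorem on the hyperbolic complement'' is not available. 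What is true is that $dP_{\ep'}^2(0)-\Id$ is invertible away from the $(q_1,p_1,q_2,p_2)$-block, which does allow a Lyapunov--Schmidt type reduction; but you should also notice that, because the shear makes the $(q_1,p_1)$- and $(q_2,p_2)$-blocks of $dP_{\ep'}^2(0)$ unipotent and nondiagonalizable, the paper works with the $2$-dimensional kernel $\ker(dP_{\ep'}^2(0)-\Id)=\tspan\{\partial_{q_1},\partial_{q_2}\}$, not a $4$-dimensional slice --- that is a significant simplification your scheme forgoes.

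Second and more seriously, the dynamical step is left undone. You yourself flag it: ``The main obstacle is the $4$-dimensional slice analysis for $(P^{\ep'})^2$: one must verify, via a Birkhoff-type normal form ..., that the nonzero leading-order twist coefficients ... transfer ... and uniformly dominate the $\ep'$-shear contribution.'' That is precisely the content of the lemma, and it is not established. Moreover the monotone-twist framework you invoke is the wrong one here. On the $(q_1,p_1)$-plane the unperturbed $P$ is rotation by an angle near $-\pi$, so $P^2$ is rotation by an angle near $-2\pi$, i.e.\ its linearization at $0$ is the identity; the fixed-point problem for $P_{\ep'}^2$ in this direction is \emph{degenerate at first order}, and the perturbation is a shear, not another rotation, so the picture is not that of a perturbed integrable twist. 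The paper instead argues by contradiction using the explicit rotation structure of $P$ (Lemma~\ref{lemma:P}): it places a hypothetical sequence of $2$-periodic points $x_j\to 0$ in a thin cone around $\tspan\{\partial_{q_1},\partial_{q_2}\}$, first rules out nonzero $(q_i,p_i)$ components for $i\geq 2$ using the smallness and nontriviality of the rotations $Q_i$, and then performs a case analysis on $\sign(q_1p_1)$ tracking the monotone behavior of $F(\tau_1(\cdot))$ under the shear and the rotations --- showing $F(\tau_1(P_{\ep'}^2(x)))\neq F(\tau_1(x))$, contradiction. This elementary but careful case analysis is exactly the argument your outline replaces with an unproven assertion about twist coefficients; without it the proof is incomplete.
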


\begin{lemma}
\label{lemma:gamma0}
There exists $\ep_0>0$ such that
\[
\mu(\ga^{\ep'}_0) = n+2\quad\text{and}\quad \mu(\ga^{\ep'}_0(1))+b_-(\ga^{\ep'}_0)-b_+(\ga^{\ep'}_0(1))=n
\]
for every $0 < \ep'<\ep_0$.
\end{lemma}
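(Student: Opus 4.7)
The plan is to compute $\mu(\ga^{\ep'}_0)$ and $b_\pm(\ga^{\ep'}_0)$ by decomposing the linearized Reeb flow into its invariant symplectic $2$-planes and then applying the Bott function formalism of Section \ref{sec:background}.

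First, I would identify the blockwise structure of the linearized return map along the unperturbed $\ga_0$. Over $x_0$ one has $R_\alpha = R_\beta$, so $\ga_0$ has period exactly $1$, and the construction of $H = f\circ G$ from $F(q,p)=\pi(q^2+p^2)$ is calibrated so that the linearized Hamiltonian flow over one period acts as follows: on the $(q_1,p_1)$-plane it is a full rotation by $2\pi$, returning to the identity; on the $(q_2,p_2)$-plane it is the identity throughout, since the Hessian of the quartic term $-F(q_2,p_2)^2$ vanishes at the origin; and on each $(q_i,p_i)$-plane with $i\geq 3$ it is a small nondegenerate rotation. Thus exactly two blocks of the linearized flow of $\ga_0$ are totally degenerate at the eigenvalue $1$.

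Next, using the conjugation trick $\vr^{H^{\ep'}_t}_t = \vr^{H_t}_t\circ\psi_t$, where $\psi_t$ is the flow of the Hamiltonian $\ep'\chi'(t)\bump(\vr^{H_t}_t(y))\cdot S(y)$, I would show that the linearization of $\psi_{1/2}$ at the origin equals $\exp((\ep'/2)JQ)$ with $Q = d^2S(0) = \diag(0,1,0,1,0,\ldots,0)$, since only the $p_1, p_2$ entries survive in the Hessian of $S$. Squaring the half-orbit return map to the full-orbit return map of $\ga^{\ep'}_0$, the perturbation contributes an extra factor $\exp(\ep'JQ)$, which acts as the shear $\bigl(\begin{smallmatrix} 1 & -\ep' \\ 0 & 1 \end{smallmatrix}\bigr)$ on the $(q_1',p_1')$ and $(q_2',p_2')$ blocks and trivially elsewhere. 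Because $d^2S(0)\geq 0$, both shears are $P_+$-type in Williamson's classification, whence $b_+(\ga^{\ep'}_0) = 2$ and $b_-(\ga^{\ep'}_0) = 0$.

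For the Conley--Zehnder index, Theorem \ref{thm:comparison} applied to the pointwise inequality $A^{\ep'}(t) = A^0(t) + \ep'\chi'(t)Q\geq A^0(t)$ yields $\Bott^{\ep'}(z)\geq\Bott^0(z)$ for every $z\in S^1$, and in particular $\mu(\ga^{\ep'}_0)\geq\mu(\ga_0) = n$. The endpoint spectra of the two perturbed blocks remain $\{1\}$ (both $I$ and the shear have $1$ as their only eigenvalue), so by integer-valuedness the Bott function is unchanged for $z\neq 1$; at $z=1$, taking the limit $\theta\to 0^+$ in formula \eqref{eq:bott via splitting} gives $\Bott(e^{\sqrt{-1}\epsilon}) = \Bott(1) + S^+_1(P)$, whence
\[
\Bott^{\ep'}(1) - \Bott^0(1) = S^+_1(I) - S^+_1(P_+) = 1 - 0 = 1
\]
in each of the two affected blocks, using $S^+_1(I)=1$ and $S^+_1(P_+)=0$ from the proof of Lemma \ref{lemma:linear}. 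Summing the two contributions yields $\mu(\ga^{\ep'}_0) = n+2$, and consequently $\mu(\ga^{\ep'}_0) + b_-(\ga^{\ep'}_0) - b_+(\ga^{\ep'}_0) = (n+2) + 0 - 2 = n$. The main technical step is this splitting-number bookkeeping pinning down the Bott jump at $z=1$ to exactly $+1$ per perturbed block.
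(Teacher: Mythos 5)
Your proposal is essentially correct and arrives at the same conclusion $\mu(\ga^{\ep'}_0)=n+2$, $b_+=2$, $b_-=0$ as the paper, but the route you take for the Conley--Zehnder index is different. The paper writes the half-period linearized Reeb flow of $\alpha_{\ep'}$ along $\ga_0^{\ep'}$ (in the trivialization $\Phi$) explicitly as the direct sum
\[
\Ga(t)=\big(R(t)\circ S_{\ep'}(t)\big)\oplus S_{\ep'}(t)\oplus R_\ep(t)^{\oplus(n-2)},
\]
then computes $\mu(P^2)=-2$, $\mu(S_{\ep'}^2)=0$, $\mu(R_\ep^2)=-1$ block by block, sums to get $\mu(\Ga^2)=-n$, and shifts by $q(2n+2)=2n+2$ via Lemma \ref{lemma:triv}. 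You instead keep the unperturbed index $\mu(\ga_0)=n$ from Lemma \ref{lemma:dynconvex} as a starting point and quantify the jump caused by the perturbation by tracking the Bott function at $z=1$: for the two affected blocks the endpoint spectrum is always $\{1\}$, so $\Bott_{\Ga^{\ep'}}(z)$ is locally constant in $\ep'$ for $z\neq 1$, and then formula \eqref{eq:bott via splitting} gives $\Bott^{\ep'}(1)-\Bott^0(1)=S^+_1(\Id)-S^+_1(P_+)=1$ per block. This is a valid and in fact somewhat cleaner argument, since it avoids computing the index of the composite $R(t)\circ S_{\ep'}(t)$ directly. The comparison Theorem \ref{thm:comparison} you invoke at the start is not actually needed (the splitting-number bookkeeping already gives the exact value, not just a lower bound), and it is, strictly speaking, applied to $A^{\ep'}(t)=A^0(t)+\ep'\chi'(t)A(t)^*QA(t)$ rather than $A^0(t)+\ep'\chi'(t)Q$ (the Hessian of $S\circ(\vr^{H_t}_t)^{-1}$ picks up the conjugating factor $A(t)=d(\vr^{H_t}_t)^{-1}(0)$); but this does not affect positivity or the endpoint $d\psi_{1/2}(0)=\exp((\ep'/2)JQ)$, which comes from the flow-composition identity $\vr^{F_t}_t=\vr^S_{\ep'\chi(t)}$. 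Your sign for the shear, $\bigl(\begin{smallmatrix}1&-\ep'\\0&1\end{smallmatrix}\bigr)=\exp(\ep'JQ_+)$, is the correct one for the paper's conventions ($i_{X_H}\om=-dH$, $Q_+=p^2/2$), so the identification of both shears as $P_+$-blocks and hence $b_+=2$, $b_-=0$ is right; combined with $\mu=n+2$ this yields $\mu+b_--b_+=n$ as desired.
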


It follows from Lemmas \ref{lemma:dynconvex}, \ref{lemma:index+}, \ref{lemma:no new orbit} and \ref{lemma:gamma0} that $\alpha_{\ep'}$ is dynamically convex for every $\ep'$ sufficiently small. As a matter of fact, if a periodic orbit $\ga$ of $\alpha_{\ep'}$ is close to a periodic orbit of $\alpha$ different from $\ga_0$ then it follows from the lower semicontinuity of the index and Lemma \ref{lemma:dynconvex} that $\mu(\ga)\geq n+2$. If $\ga$ is close to $\ga_0$ then, by Lemma \ref{lemma:no new orbit}, $\ga$ is equal to $\ga^{\ep'}_0$ and we conclude, by Lemma \ref{lemma:gamma0}, that $\mu(\ga)=n+2$. Finally, if $\ga$ is a new orbit of $\alpha_{\ep'}$ (that is, $\ga$ is not close to any periodic orbit of $\alpha$) then its period is very large and the index-positivity of $\alpha$ (Lemma  \ref{lemma:index+}) assures that $\mu(\ga)\geq n+2$.

To conclude the proof of Theorem \ref{thm:multiplicity}, notice that $\alpha_{\ep'}$ is antipodally symmetric and clearly $\ga_0^{\ep'}$ is symmetric. Thus, it follows from Lemma \ref{lemma:gamma0} that $\alpha_{\ep'}$ is not strongly dynamically convex.

\section{Proofs of Technical Lemmas}
\label{sec:technical}

\subsection{Proof of Lemma \ref{lemma:dynconvex}}
\label{sec:dynconvex}

Consider on $W \simeq U \times S^1$ the coordinates $(x,\theta)$, where $x=(q_1,p_1,\dots,q_n,p_n)$ and $\theta$ is the coordinate along the fiber such that, with respect to these coordinates, $x_0$ is the origin and
\[
\beta|_W=\lambda+d\theta,
\]
where $\lambda=\frac 12\sum_{i=1}^n (q_idp_i-p_idq_i)$ is the Liouville form. Clearly, if $\ga$ lies outside $W$ then $\mu(\ga)\geq n+2$. So suppose that the image of $\ga$ is contained in $W$. The Darboux coordinates induce an obvious (constant) trivialization $D: TU \to U \times \R^{2n}$. From this we get a trivialization of $\xi|_W$ given by
\begin{equation}
\label{eq:Phi}
\Phi(v)=\pi_2(D(\pi_*v)),
\end{equation}
where $\pi_2: U \times \R^{2n} \to \R^{2n}$ is the projection onto the second factor. It is clear that
\begin{equation}
\label{eq:index Phi}
\mu(\ga,\Phi)=\mu(\ga_H),
\end{equation}
where $\mu(\ga,\Phi)$ stands for the index of $\ga$ with respect to the trivialization $\Phi$ and $\ga_H=\pi\circ\ga$ is the corresponding orbit of $H$ with the index computed using the trivialization $D$. Let $\mathfrak f$ be the generator of $\pi_1(W)\simeq \Z$ given by the homotopy class of a simple orbit of $R_{\beta}$ contained in $W$. Let $q\in \Z$ be such that $[\ga]=q\mathfrak f$, where $[\ga]$ is the homotopy class of $\ga$ in $W$. It turns out that $q$ is given by the Hamiltonian action of $\ga_H$. Indeed,
\begin{equation}
\label{eq:action}
q=\int_\ga d\theta = \int_0^T \beta(R_\balpha(\ga(t))) - \lambda(X_{H}(\ga_H(t)))\,dt = \int_0^T H(\ga_H(t)) - \lambda(X_{H}(\ga_H(t)))\,dt,
\end{equation}
where $T$ is the period of $\ga_H$. Therefore, the action $A_{H}(\ga_H)$ is an integer number.

Consider a trivialization $\Psi$ of $\ga^*\xi$ induced by a capping disk. The relation between the trivializations $\Phi$ and $\Psi$ are given by the following lemma.

\begin{lemma}
\label{lemma:triv}
We have that
\[
\mu(\ga,\Psi)=\mu(\ga,\Phi)+q(2n+2).
\]
\end{lemma}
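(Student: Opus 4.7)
The plan is to interpret the difference as a Maslov index of a transition loop, argue that it is a homotopy invariant factoring through $\pi_{1}(W) \cong \mathbb{Z}$, reduce the computation to the generator $\mathfrak{f}$, and then carry out a direct first Chern class calculation exploiting the isomorphism $\xi \cong \pi^{*}T\mathbb{C}P^{n}$.

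First, I would use the standard property of the Conley--Zehnder index that for any two symplectic trivializations of $\ga^{*}\xi$, the difference of the corresponding indices equals the Maslov index of the loop of symplectic matrices relating them. Next, I would show that this difference $\nu(\ga) := \mu(\ga,\Psi)-\mu(\ga,\Phi)$ depends only on the free homotopy class of $\ga$ inside $W$. Indeed, $\Phi$ is the restriction of a single, globally defined trivialization of $\xi|_{W}$, so it deforms continuously along any homotopy inside $W$; and $\Psi$ is determined up to homotopy by the choice of a capping of $\ga$ in $S^{2n+1}$, any two of which differ by an element of $\pi_{2}(S^{2n+1})=0$. Since Maslov indices are additive under concatenation and $\pi_{1}(W)=\pi_{1}(U\times S^{1})\cong\mathbb{Z}$ is generated by the fiber class $\mathfrak{f}$, the invariant $\nu$ descends to a homomorphism $\mathbb{Z}\to\mathbb{Z}$ and it is enough to prove $\nu(\mathfrak{f})=2n+2$.

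For this last step I would fix an explicit capping of $\mathfrak{f}$ inside a $3$-sphere $S^{3}\subset S^{2n+1}$ obtained by intersecting the standard sphere with the complex line through $\mathfrak{f}$. The resulting disk $\sigma\colon D^{2}\to S^{2n+1}$ projects under $\pi$ to a $2$-chain $\bar\sigma\colon D^{2}\to\mathbb{C}P^{n}$ whose boundary is collapsed to $x_{0}$; as a map $S^{2}=D^{2}/\partial D^{2}\to\mathbb{C}P^{n}$ the image is a linearly embedded $\mathbb{C}P^{1}$ and represents the generator of $\pi_{2}(\mathbb{C}P^{n})\cong\mathbb{Z}$. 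Since $\xi\cong\pi^{*}T\mathbb{C}P^{n}$ as complex symplectic bundles and $\Phi$ is precisely the horizontal lift of the Darboux (constant) trivialization of $T\mathbb{C}P^{n}|_{U}$, the transition loop between $\Psi$ and $\Phi$ along $\mathfrak{f}$ is computed by the Chern pairing
\[
\nu(\mathfrak{f}) \;=\; 2\,c_{1}(T\mathbb{C}P^{n})[\bar\sigma] \;=\; 2(n+1),
\]
using the classical identity $c_{1}(T\mathbb{C}P^{n})[\mathbb{C}P^{1}]=n+1$. Multiplying by $q$ gives the stated formula.

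The main technical subtlety is that $\Phi$, although globally defined on $W$, does not extend to any disk bounding $\mathfrak{f}$ inside $W$ (because $\mathfrak{f}$ generates $\pi_{1}(W)$), so the usual ``difference of two cappings'' formula for the Maslov index cannot be applied verbatim. The cleanest way to handle this is to pass to the universal cover $\tilde W\simeq U\times\mathbb{R}$, on which $\Phi$ lifts to a trivialization coming from a genuine disk, and to keep careful track of the monodromy picked up when returning to $W$ along $\mathfrak{f}$. This monodromy is precisely captured by the first Chern class pairing above, and reconciling the signs and orientations in that passage is the only delicate point of the argument.
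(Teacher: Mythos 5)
Your proof is correct and follows essentially the same route as the paper: both reduce, via homotopy invariance of the index difference within $W$, to the case of the (iterate of the) fiber generator $\mathfrak{f}=[\phi]$ of $\pi_1(W)\cong\Z$, and then compute the index shift there. The paper leaves the last step as ``an easy computation,'' while you make it explicit via the identification $\xi\cong\pi^*T\CP^n$ and the Chern pairing $c_1(T\CP^n)[\CP^1]=n+1$ (times $2$ for the Maslov index of the determinant loop); this is a clean way to carry out that computation and is consistent with the paper's answer $q(2n+2)$.
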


\begin{proof}
Choose a simple orbit $\phi(t)$ of $R_{\beta}$ contained in $W$ and let $Q: [0,1] \times S^1 \to W$ be a homotopy between $\ga$ and $\phi^q$. We can extend the trivialization $\Psi$ to $Q^*\xi$ inducing a trivialization of $(\phi^q)^*\xi$. We have that
\[
\mu(\ga,\Psi)-\mu(\ga,\Phi)=\mu(\phi^q,\Psi)-\mu(\phi^q,\Phi).
\]
But an easy computation shows that
\[
\mu(\phi^q,\Psi)-\mu(\phi^q,\Phi)=q(2n+2).
\]
\end{proof}

From now on, \emph{if the trivialization is not explicitly stated we use a trivialization given by a capping disk for $\ga$ and the trivialization $D$ for closed orbits of Hamiltonians on $U$}. Let $U_0=\{x\in U; -G(x)<r_0\}$. Clearly, $U_0$ is invariant under the Hamiltonian flow of $H$ and if the image of $\ga_H$ is not contained in $U_0$ then $\mu(\ga_H)=-n$ which implies, by Lemma \ref{lemma:triv}, that $\mu(\ga)\geq n+2$.

Thus suppose that the image of $\ga_H$ lies in $U_0$. If $\ga_H(t)=0$ for some $t$ then $\ga_H=(\ga_H^0)^k$ for some $k\in \N$, where $\ga_H^0: [0,1] \to U_0$ is the constant solution $\ga_H^0(t)\equiv 0$. A direct computation shows that in this case, due to our choice of $f$, the linearized Hamiltonian flows of $H$ and $G$ along $\ga_H$ coincide and therefore have the same index. But the Hamiltonian flow of $G$ is given by
\begin{equation}
\label{eq:flow G}
\vr^G_t(z_1,\dots,z_n)=(e^{-2\pi \sqrt{-1}t}z_1,e^{-4F(z_2)\pi\sqrt{-1}t}z_2,e^{-2\ep\pi \sqrt{-1}t}z_3,\dots,e^{-2\ep\pi \sqrt{-1}t}z_n),
\end{equation}
where we are identifying $(q_i,p_i)$ with $z_i=q_i+\sqrt{-1}p_i$. Thus, it is clear that if $k=1$ then, choosing $\ep<1$, we have
\[
\mu(\ga_H)=-3-1-(n-2)=-n-2.
\]
This implies, by Lemma \ref{lemma:triv}, that $\mu(\ga)=n$. Moreover, if $k>1$ then
\[
\mu(\ga_H)=
\begin{cases}
-(2k+1)-1-(n-2)(2\lceil k\ep\rceil+1)\quad\text{if}\ k\ep \notin \Z \\
-(2k+1)-1-(n-2)(2\lceil k\ep\rceil-1)\quad\text{otherwise,}
\end{cases}
\]
where $\lceil x\rceil = \min\{k \in \Z; k \geq x\}$. Thus, since $q=k$, using Lemma \ref{lemma:triv} and choosing $\ep$ sufficiently small we conclude that
\[
\mu(\ga)\geq n+2.
\]

Now, let us consider the remaining case, where $\ga_H$ lies inside $U_0$ and $\ga_H(t)\neq 0$ for every $t$. Let $\ga_G(t)=\ga_H(t/f'(G(\ga_H(0))))$ be the corresponding orbit of $G$.

\begin{lemma}
\label{lemmma:indexes HS}
We have that $|\mu(\ga_H)-\mu(\ga_G)|\leq 1$.
\end{lemma}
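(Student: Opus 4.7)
The plan is to exhibit the linearized Hamiltonian flows of $H$ along $\ga_H$ and of $G$ along $\ga_G$ (suitably reparametrized to live on the same interval) as two symplectic paths generated by symmetric matrices whose pointwise difference is a rank-one positive semi-definite perturbation, and then apply a two-sided rank-one estimate for the Conley--Zehnder index.

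Since $H = f\circ G$, we have $X_H = f'(G)X_G$, so $G$ is preserved by the $H$-flow; in particular $c := G(\ga_H(0)) = G(\ga_H(t))$ for all $t$, and $\ga_G(t) = \ga_H(t/f'(c))$ is genuinely an orbit of $G$. Differentiating $H = f\circ G$ twice and evaluating along $\ga_H$, where $G \equiv c$, yields
\[
d^2H(\ga_H(t)) \;=\; f''(c)\,dG(\ga_H(t))\otimes dG(\ga_H(t)) \;+\; f'(c)\,d^2 G(\ga_H(t)).
\]
Let $\Phi_H$ be the linearized $H$-flow along $\ga_H$ (generated by $A_H(t) := d^2H(\ga_H(t))$), and set $\tilde\Phi_G(t) := \Phi_G(f'(c)\,t)$, which by the chain rule is generated by $A_G(t) := f'(c)\,d^2 G(\ga_H(t))$. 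Then
\[
A_H(t) - A_G(t) \;=\; f''(c)\,dG(\ga_H(t))\otimes dG(\ga_H(t))
\]
is positive semi-definite (using $f''(c)\geq 0$ on the region where $\ga_H$ lives) and of pointwise rank at most one; note that $\ga_H(t)\neq 0$ by assumption, so $dG(\ga_H(t))\neq 0$.

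Because the Conley--Zehnder index is invariant under time reparametrization, $\mu(\ga_G) = \mu(\tilde\Phi_G)$. Applying the comparison theorem \ref{thm:comparison} to the pair $(A_H, A_G)$ and specializing the resulting Bott-function inequality at $z = 1$ gives at once the lower bound $\mu(\ga_H) \geq \mu(\ga_G)$.

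The main obstacle is the matching upper bound $\mu(\ga_H) \leq \mu(\ga_G) + 1$; this is precisely where the rank-one nature of the perturbation must be invoked, as the mere monotonicity used in Theorem \ref{thm:comparison} is not enough. The plan is to re-run the spectral-flow argument from the proof of Theorem \ref{thm:comparison} along the affine homotopy $A_s := A_G + s(A_H - A_G)$, $s \in [0,1]$: the corresponding homotopy of compact self-adjoint operators $C^s_{1,A_s}$ on $E_{T,1}$ is monotone and, by construction, the generating quadratic form is of pointwise rank at most one, so the spectral flow of $L^s_{1,A_s}$ as $s$ varies from $0$ to $1$ can change by at most one. Converting this back via \eqref{eq:i_z x Bott} yields $\mu(\ga_H) - \mu(\ga_G) \leq 1$, completing the proof.
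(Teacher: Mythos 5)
Your decomposition $d^2H(\ga_H(t)) = f''(c)\,dG\otimes dG + f'(c)\,d^2G$ is correct, and the monotonicity $\mu(\ga_H)\geq\mu(\ga_G)$ does indeed follow from Theorem~\ref{thm:comparison}; so one half of the inequality is established cleanly. However, the other half, $\mu(\ga_H)\leq\mu(\ga_G)+1$, is exactly the hard direction, and the ``rank-one implies spectral flow at most one'' step is a genuine gap. The operator you would be perturbing by, namely the compact self-adjoint multiplication operator by $-f''(c)\,dG(\ga_H(t))\otimes dG(\ga_H(t))$ on $L^2$, is \emph{not} a rank-one operator — it is only pointwise rank one at each $t$, and as an operator on function space it has infinite rank. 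So finite-dimensional interlacing/rank-one perturbation theory does not apply. More concretely, along the affine homotopy $A_s = A_G + s(A_H-A_G)$ nothing prevents the kernel of $L^s_{1,A_s}$ from acquiring extra dimensions at several distinct values of $s$; each such crossing has a crossing form $-\int f''(c)\,|\langle dG,x\rangle|^2\,dt$ that is negative semidefinite, which is why the flow is monotone, but its rank can be as large as the kernel dimension and there can be several crossings, so the total spectral flow is not a priori bounded by one. (A time-dependent rank-one potential can create arbitrarily many conjugate points, exactly as a strong harmonic potential does.)

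What actually makes the upper bound work is the geometric structure that your observation points at but does not use: the covector $dG(\ga_H(t))$ is $J$-dual to $X_G(\ga_H(t))$, and $X_G$ is preserved by both linearized flows. Moreover $dG$ annihilates $T_{\ga_H(t)}\Sigma$, so the rank-one perturbation vanishes identically on the symplectic complement $D_2(t)\subset T\Sigma$ of a $2$-plane $D_1(t)$ containing $X_G$. This is precisely the invariant splitting $D_1\oplus D_2$ that the paper's proof introduces: on $D_2$ the linearized flows of $G$ and $H$ agree up to a constant time-reparametrization (hence equal indices), and on $D_1$ both paths are unipotent with spectrum $\{1\}$ (since they preserve the line $\langle X_G\rangle$), forcing their indices into $\{0,-1\}$ and giving the two-sided bound. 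Without isolating the $2$-dimensional block, the bound you want is not available. To salvage your approach you would need to first reduce to $D_1$ using the splitting — at which point you would essentially be reproducing the paper's argument rather than bypassing it.
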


\begin{proof}
Firstly, notice that it is enough to find some symplectic trivialization $\Phi^t: T_{\ga_G(t)}\R^{2n} \to \R^{2n}$ such that
\[
|\mu(\ga_G,\Phi^t)-\mu(\ga_H,\Phi^{f'(e)t})|\leq 1,
\]
where $e=G(\ga_H(0))$ and the indexes above are the indexes of the symplectic paths defined using the corresponding trivializations. Let $T$ and $T_G=f'(G(\ga_H(0)))T$ be the periods of $\ga_H$ and $\ga_G$ respectively. We claim that there exists a symplectic plane $P$ such that $X_G(x)\in P$ and $d\vr^G_{T_G}(x)P=P$, where $x=\ga_H(0)$. Indeed, write $x=(z_1,\dots,z_n)$ and let $v_i=(0,\dots,0,z_i,0,\dots,0)$, $w_i=(0,\dots,0,\sqrt{-1}z_i,0,\dots,0)$ and $P_i=\tspan\{v_i,X_G(x)\}$. Note that if $z_i \neq 0$ then $P_i$ is a symplectic plane because $X_G(x)=-2\pi\sqrt{-1}(z_1,2F(z_2)z_2,\ep z_3,\dots,\ep z_n)$ and $\om(v_i,w_i)\neq 0$. Moreover, by \eqref{eq:flow G}, $d\vr^G_{T_G}(x)P_i=P_i$ for every $i\neq 2$ and $d\vr^G_{T_G}(x)P_2=P_2$ if $z_i=0$ for every $i\neq 2$. Thus, if $z_2 \neq 0$ and $z_i=0$ for every $i\neq 2$ we take $P=P_2$; if $z_i\neq 0$ for some $i\neq 2$ we take $P=P_i$.

Now, let $S=G^{-1}(e)$. Define
\[
D_1(t)=d\vr^G_{t}(x)P\quad\text{and}\quad D_2(t)=d\vr^G_{t}(x)P^\om=D_1(t)^\om,
\]
where $P^\om$ is the symplectic orthogonal to $P$. Since $X_G(\ga_G(t)) \in D_1(t)$ we have that $D_2(t) \subset T_{\ga_G(t)}S$ for every $t$. By construction, $D_1$ and $D_2$ are both invariant under the linearized Hamiltonian flow of $G$. But $\vr^H_t|_S=\vr^G_{f'(e)t}|_S$ which implies that $D_2$ is also invariant under the linearized Hamiltonian flow of $H$ and therefore the same holds for $D_1$ (since $D_1=D_2^\om$). Moreover, $X_H(\ga_G(t))=f'(e)X_G(\ga_G(t)) \in D_1(t)$ for every $t$.

Take symplectic trivializations $\Phi^t_1: D_1(t) \to \R^2$ and $\Phi^t_2: D_2(t) \to \R^{2n-2}$. We can choose $\Phi_1$ such that $\Phi^t_1(X_G(\ga_G(t)))=v$ for every $t$, where $v$ is a fixed vector in $\R^2$. Let $\Ga^G_1(t)=\Phi^t_1 \circ d\vr^G_{t}(x) \circ (\Phi^0_1)^{-1}$, $\Ga^G_2(t)=\Phi^t_2 \circ d\vr^G_{t}(x) \circ (\Phi^0_2)^{-1}$ and $\Ga^G(t)=\Phi^t \circ d\vr^G_{t}(x) \circ (\Phi^0)^{-1}$ be the corresponding symplectic paths, where $\Phi=\Phi_1\oplus\Phi_2$. Similarly, define $\Ga^H_1(t)=\Phi^{f'(e)t}_1 \circ d\vr^H_{t}(x) \circ (\Phi^0_1)^{-1}$, $\Ga^H_2(t)=\Phi^{f'(e)t}_2 \circ d\vr^H_{t}(x) \circ (\Phi^0_2)^{-1}$ and $\Ga^H(t)=\Phi^{f'(e)t} \circ d\vr^H_{t}(x) \circ (\Phi^0)^{-1}$. Since the Hamiltonian flows of $G$ and $H$ restricted to $S$ differ by a constant reparametrization,
\[
\mu(\Ga^G_2)=\mu(\Ga^H_2).
\]
By our choice of $\Phi_1$, we have that the spectra of $\Ga^G_1(t)$ and $\Ga^H_1(t)$ are equal to $\{1\}$ for every $t$. This implies that
\[
\mu(\Ga^G_1) \in \{0,-1\}\quad\text{and}\quad\mu(\Ga^H_1) \in \{0,-1\}
\]
and consequently $|\mu(\Ga^G_1)-\mu(\Ga^H_1)|\leq 1$. Thus,
\[
|\mu(\Ga^G)-\mu(\Ga^H)|\leq 1
\]
as desired.
\end{proof}

Now, let $k: \R \to \R$ be a smooth function such that $k'(r)>0$ for every $r>0$. Define $\delta: \R \to \Z$ as
\[
\delta(x)=
\begin{cases}
2x +1\quad\text{if}\ x \in \Z ,\\
2\lceil x\rceil -1\quad\text{otherwise,}
\end{cases}
\]
where, as before, $\lceil x\rceil = \min\{k \in \Z; k \geq x\}$. In what follows, recall that $F: \R^2 \to \R$ is the Hamiltonian given by $F(q,p)=\pi(q^2+p^2)$.

\begin{lemma}
\label{lemma:index K}
Let $K=-k\circ F$ with $k$ as above. Given a periodic orbit $\ga_K$ of $K$ with period $T_K$ then
\[
\mu(\ga_K) \geq -\delta(k'(F(\ga_K(0)))T_K).
\]
\end{lemma}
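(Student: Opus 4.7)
The strategy is to compute the linearized Hamiltonian flow $\Gamma_K$ of $K=-k\circ F$ along $\ga_K$ explicitly and then bound $\mu(\ga_K)$ via the mean index. Because $K$ depends only on $F$, its flow is $\varphi^K_t(z) = e^{-2\pi\sqrt{-1}\,k'(F(z))\,t}z$ in the complex coordinate $z=q+\sqrt{-1}p$, preserving every level set $F^{-1}(e)$ and rotating it at angular speed $-2\pi k'(e)$. Hence any periodic orbit of $K$ is either the constant orbit at the origin (with arbitrary $T_K$; here $x:=k'(F(\ga_K(0)))T_K = k'(0) T_K$ may be any non-negative real) or a non-constant orbit on some $F^{-1}(e)$ with $e>0$ winding around the origin some $N\geq 1$ times, in which case $T_K=N/k'(e)$ and $x=N\in\Z_{>0}$.

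In the constant-orbit case, $\mathrm{Hess}(K)(0) = -2\pi k'(0) I$, since the rank-one correction $-4\pi^2 k''(F(z))\,zz^T$ vanishes at $z=0$. Thus $\Gamma_K(t)$ is the pure rotation $\exp(-2\pi k'(0)tJ)$ of total angle $-2\pi x$. A case analysis of the standard formula $\mu(\exp(\alpha tJ)|_{[0,1]}) = 2k+1$ for $\alpha\in(2\pi k, 2\pi(k+1))$, combined with the lower-semicontinuity convention at $\alpha\in 2\pi\Z$, will yield $\mu(\ga_K) = -\delta(x)$ in every subcase ($x=0$, $x\in\Z_{>0}$, or $x\notin\Z$), giving the lemma with equality.

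For the non-constant case, along $\ga_K$ one has $\mathrm{Hess}(K)(\ga_K(t)) = -2\pi k'(e) I - 4\pi^2 k''(e)\,\ga_K(t)\ga_K(t)^T$. I will conjugate the linearized flow by the rotation $\exp(2\pi k'(e)tJ)$, which pulls $\ga_K(t)$ back to $\ga_K(0)$ and cancels the identity piece of the Hessian, rendering the equation autonomous with generator $JA$, where $A=-4\pi^2 k''(e)\,\ga_K(0)\ga_K(0)^T$ is symmetric and rank one. Since the image of $JA$ is $\R\,J\ga_K(0)$, which lies in its kernel $(\ga_K(0))^\perp$, the matrix $JA$ is nilpotent and $\Gamma_K(t) = \exp(-2\pi k'(e)tJ)\cdot(I+tJA)$. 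Iterating produces $\Gamma_K|_{[0,kT_K]}$ with endpoint $I+kT_K\,JA$, so $\mu(\Gamma_K^k)/k \to -2k'(e)T_K = -2N$, which identifies the mean index as $\mi(\Gamma_K|_{[0,T_K]}) = -2N$. The elementary bound $\mu\geq\mi-n$ in $\Sp(2)$ then yields $\mu(\ga_K) \geq -2N-1 = -(2N+1) = -\delta(N)$, completing the proof.

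The main obstacle is verifying that the nilpotent-shear correction to the pure rotation does not shift the asymptotic rotation number, i.e., that $\mi(\Gamma_K|_{[0,T_K]})$ really equals $-2k'(e)T_K$. This is exactly what the conjugation-plus-iteration argument above achieves: after conjugation the comparison with the pure rotation path $\exp(-2\pi k'(e)tJ)$ becomes transparent, and the $k$-th iterate differs from it only by the harmless factor $I+kT_K JA$.
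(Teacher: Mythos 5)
Your proof is correct in substance but takes a genuinely different route from the paper's. For the non-constant orbits, the paper chooses a moving symplectic trivialization $\Phi^t$ of $T_{\ga_{-F}(t)}\R^2$ adapted to the pair $(X_{-F}, -\nabla F/\|\nabla F\|)$; in that frame the linearized flow of $-F$ is the constant identity path (so $\mu(\ga_{-F},\Phi)=-1$), while the linearized flow of $K$ preserves the direction of $X_K$ and therefore has constant spectrum $\{1\}$, forcing $\mu(\ga_K,\Phi)\in\{-1,0\}$; hence $\mu(\ga_K,\Phi)\ge\mu(\ga_{-F},\Phi)$ and the same holds after switching back to the constant trivialization, giving the bound via the explicit $\mu(\ga_{-F})=-\delta(k'(e)T_K)$. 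You instead compute the linearized flow directly as $\Ga_K(t)=R(t)(I+tJA)$ with $R$ the rotation at angular speed $-2\pi k'(e)$ and $JA$ rank-one nilpotent, identify $\mi(\Ga_K)=-2N$, and invoke the general bound $\mu\ge\mi-n$ in $\Sp(2)$. Both approaches crystallize the same linear-algebraic fact — that in a suitable frame the linearized $K$-flow is unipotent — but extract the index estimate differently: the paper compares two Conley--Zehnder indices in the same moving frame, while you pass through the mean index.

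There is one step you should tighten. The inference ``endpoint of $\Ga_K^k$ is $I+kT_KJA$, so $\mu(\Ga_K^k)/k\to-2N$'' is not automatic: the unipotent factor $I+kT_KJA$ is not bounded, so you cannot simply wave it away. What you actually need is that $\Ga_K^k(t)=R(t)(I+tJA)$ for \emph{all} $t\in[0,kT_K]$ (which follows from $(JA)^2=0$ and the $T_K$-periodicity of $R$), and then one of two arguments: either observe that the asymptotic winding number of $\Ga_K(t)v$ agrees with that of $R(t)v$ because $(I+tJA)v$ converges in direction to $Jw$ (and so contributes zero to the rotation number), or homotope the pointwise product $R(t)(I+tJA)$ rel endpoints to the concatenation of the two paths and use additivity of the mean index under concatenation together with $\mi(I+tJA)=0$. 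Either of these fills the gap you yourself flag at the end. With that said, the strategy is sound and yields the sharp bound $-\delta(N)$ exactly as the paper does.
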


\begin{proof}
The flow of $K$ is given by $\vr^K_t(z)=e^{-2\pi k'(F(z))\sqrt{-1}t}z$, where, as before, we are identifying $(q,p)$ with $z=q+\sqrt{-1}p$. An easy computation shows that if $\ga_K(t)\equiv 0$ then the linearized Hamiltonian flow on $\ga_K$ is given by $e^{-2\pi k'(0)\sqrt{-1}t}z$ and consequently $\mu(\ga_K) = -\delta(k'(0)T_K)$.

If $\ga_K$ is away from the origin, we proceed similarly as in the proof of the previous lemma. Let $\ga_{-F}(t)=\ga_K(t/k'(F(\ga_K(0))))$ be the corresponding periodic orbit of $-F$ with period $T_{-F}=k'(F(\ga_K(0)))T_K$. Take a symplectic trivialization $\Phi^t: T_{\ga_{-F}(t)}\R^2 \to \R^2$  such that $\Phi^t(X_{-F}(\ga_{-F}(t)))=v$ and  $\Phi^t(-\nabla F(\ga_{-F}(t))/\|\nabla F(\ga_{-F}(t))\|)=w$ (here the gradient and the norm are taken with respect to the Euclidean metric) where $\{v,w\}$ is a fixed symplectic basis in $\R^2$. Then clearly the linearized Hamiltonian flow of $-F$ with respect to this trivialization is constant equal to the identity and therefore
\[
\mu(\ga_{-F},\Phi)=-1.
\]
Since $X_K(\ga_K(t))=k'(F(\ga_K(0)))X_{-F}(\ga_K(t))$ is preserved under the linearized Hamiltonian flow of $K$, we see that  the spectrum of the symplectic path
\[
\Ga^K(t)=\Phi^{k'(F(\ga_K(0)))t} \circ d\vr^K_{t}(x) \circ (\Phi^0)^{-1}
\]
is constant and equal to $\{1\}$. Thus,
\[
\mu(\ga_K,\Phi)=\mu(\Ga^K) \in \{-1,0\}.
\]
In particular, we conclude that $\mu(\ga_K,\Phi) \geq \mu(\ga_{-F},\Phi)$. But this implies that
\[
\mu(\ga_K) \geq \mu(\ga_{-F}),
\]
where the indexes above are computed using the canonical trivialization of $\R^2$. Finally, an easy computation shows that
\[
\mu(\ga_{-F})=-\delta(k'(F(\ga_K(0)))T_K).
\]
\end{proof}

Therefore, it follows from \eqref{eq:flow G} and Lemma \ref{lemma:index K} that
\[
\mu(\ga_G) \geq -\delta(T_G) - \delta(2F(\tau_2(\ga_G(0)))T_G) - (n-2)\delta(\ep T_G),
\]
where $\tau_2: \R^{2n} \to \R^2$, given by $\tau_2(q_1,p_1,\dots,q_n,p_n)=(q_2,p_2)$, is the projection onto the second factor. Let $q$ be the integer number such that $[\ga]=q\mathfrak f$ given by \eqref{eq:action}. It is clear from the previous inequality and Lemmas \ref{lemma:triv} and \ref{lemmma:indexes HS} that
\begin{equation}
\label{eq:index bga}
\mu(\ga) \geq q(2n+2) -\delta(T_G) - \delta(2F(\tau_2(\ga_G(0)))T_G) - (n-2)\delta(\ep T_G) -1.
\end{equation}
(Recall that $\mu(\ga)$ is the index of $\ga$ with respect to a trivialization given by a capping disk.) Thus, to prove that $\mu(\ga)\geq n+2$ it is enough to show that
\begin{equation}
\label{eq:dynconvex}
q^{-1}(n+2 +\delta(T_G) + \delta(2F(\tau_2(\ga_G(0)))T_G) + (n-2)\delta(\ep T_G)+1) \leq 2n+2.
\end{equation}
In order to prove this inequality, note first that $q\geq 2$. Indeed, this is a consequence of the following result.

\begin{lemma}
\label{lemma:action}
If $\ga_H$ is a non-constant periodic orbit of $H$ then $A_{H}(\ga_H)>1$.
\end{lemma}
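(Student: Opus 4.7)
The plan is to start by observing that $G$ is conserved along the flow of $X_H = f'(G) X_G$, so $G(\ga_H(t)) \equiv g$ for some $g$. The range $g \le -r_0$ gives $X_H = 0$ along $\ga_H$, and $g = 0$ forces every $F(z_i^0) = 0$ and hence $\ga_H$ constant, so one may restrict to $g \in (-r_0, 0)$, where $f'(g)\in(0,1)$ and $f(g) \in (C, 1)$. Applying Euler's identity to the three summands of $G$, which are respectively $2$-, $4$-, and $2$-homogeneous in the relevant pairs of coordinates, yields $\lambda(X_G) = G - F(q_2, p_2)^2$. Because $H$ is constant along $\ga_H$ and $\lambda(X_H) = f'(g)\,\lambda(X_G)$, the action then rewrites as
\[
A_H(\ga_H) = T_G\!\left[\frac{f(g)}{f'(g)} - g + F(z_2^0)^2\right],
\]
where $T_G := f'(g)\,T$ is the period of the reparametrised $X_G$-orbit $\ga_G(s) := \ga_H(s/f'(g))$ and $z_2^0$ denotes the second complex coordinate of $\ga_H(0)$.

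Next, I would combine the strict convexity of $f$ on $(-r_0, 0)$ with lower bounds on $T_G$ extracted from the explicit formula \eqref{eq:flow G} for the $X_G$-flow. Convexity plus $f(0) = f'(0) = 1$ yields $f(g) > 1 + g$, and since $f'(g) \le 1$ also $f(g)/f'(g) - g > 1$. Hence whenever $z_1^0 \neq 0$, periodicity of $\ga_G$ forces $T_G$ to be a positive integer and one obtains $A_H(\ga_H) > T_G \ge 1$ at once. In the case $z_1^0 = 0$ with some $z_i^0 \neq 0$ for $i \ge 3$, the constraint $\ep T_G \in \Z$ gives $T_G \ge 1/\ep$; writing $\phi(g) := f(g) - f'(g)\,g$, the bound $\phi(g)/f'(g) \ge \phi(g) \ge C$ (using that $\phi$ is non-decreasing on $(-r_0,0)$ with $\phi(-r_0)=C$) yields $A_H(\ga_H) \ge T_G\,C \ge C/\ep > 1$, as soon as $\ep < C$, which is secured by the choice of $\ep$.

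The hard case, and the step I expect to be the main obstacle, is $z_1^0 = 0$, $z_i^0 = 0$ for $i \ge 3$, and $z_2^0 \neq 0$: then $g = -F(z_2^0)^2$ and the only periodicity constraint is $2 F(z_2^0) T_G \in \Z$, which merely gives $T_G \ge 1/(2\sqrt{-g})$ and can be arbitrarily small as $g \to 0^-$. Substituting $F(z_2^0)^2 = -g$ and $f(g)/f'(g) \ge f(g) \ge C$, the task reduces to the elementary inequality
\[
\frac{1}{2\sqrt{-g}}\,(C - 2g) = \frac{C}{2\sqrt{-g}} + \sqrt{-g} > 1,
\]
and AM--GM gives $C/(2\sqrt{-g}) + \sqrt{-g} \ge \sqrt{2C}$, so the desired conclusion is equivalent to $C > 1/2$ --- which is precisely the hypothesis on $C$, and which also explains why the quartic term $F(q_2,p_2)^2$ was inserted into $G$ in the first place. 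Iterates multiply all the relevant bounds by the iteration count, so $A_H(\ga_H) > 1$ persists in every case.
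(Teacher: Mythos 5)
Your proof is correct, and it starts from the same place as the paper's: the formula $A_H(\ga_H) = T\big(f(G) - f'(G)A\big)$ with $A = G - F(z_2^0)^2$, rewritten as $T_G\big[f(g)/f'(g) - g + F(z_2^0)^2\big]$. Where you diverge is in how the lower bound on $T_G$ is handled. The paper simply discards the $F(z_2^0)^2\ge 0$ term, shows the remaining bracket $s(G)=f(G)/f'(G)-G$ exceeds $1$ by monotonicity of $s$, and asserts $T_G\ge 1$ ``follows from \eqref{eq:flow G}''—a terse claim that tacitly uses the smallness of $r_0$ (namely $r_0<1/4$, so that $F(z_2)<1/2$ in the pure-$z_2$ case, and $\ep<1$ for the $i\ge 3$ case). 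You instead split into cases according to which coordinates of $\ga_H(0)$ are nonzero, and in the subtle case where only $z_2^0\ne 0$ you \emph{keep} the $F(z_2^0)^2=-g$ term, use $f(g)/f'(g)>C$ and $T_G\ge 1/(2\sqrt{-g})$, and close with AM--GM to get $A_H>\sqrt{2C}>1$. This is a genuinely different way of finishing, buying independence from the $r_0<1/4$ constraint (you only use $C>1/2$), and it isolates precisely why the quartic term $F(q_2,p_2)^2$ was built into $G$. Two minor quibbles, neither of which affects correctness: (i) your assertion that $1/(2\sqrt{-g})$ ``can be arbitrarily small as $g\to 0^-$'' has the direction reversed—this lower bound tends to $+\infty$ as $g\to 0^-$; the case is delicate only because $-g$ could be as large as $r_0$, pushing the bound below $1$ if $r_0$ were not small; and (ii) the closing remark about iterates is superfluous—the argument already applies to every non-constant periodic orbit, since $T_G$ is whatever the period of the given orbit happens to be.
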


\begin{proof}
Let $T$ be the period of $\ga_H$. We have that
\begin{align*}
A_{H}(\ga_H) & = \int_0^T f(G(\ga_H(t))) - f'(G(\ga_H(t)))\lambda(X_{G}(\ga_H(t)))\,dt \\
& = Tf(G(\ga_H(0))) - f'(G(\ga_H(0)))\int_0^T \lambda(X_{G}(\ga_H(t)))\,dt.
\end{align*}
An easy computation shows that
\[
\lambda(X_{G}(q_1,p_1,\dots,q_n,p_n)) = -\big( F(q_1,p_1)+2F(q_2,p_2)^2+\ep\sum_{i=3}^n F(q_i,p_i) \big).
\]
But the Hamiltonian
\[
G(q_1,p_1,\dots,q_n,p_n) - F(q_2,p_2)^2 = -\big( F(q_1,p_1)+2F(q_2,p_2)^2+ \ep\sum_{i=3}^n F(q_i,p_i) \big)
\]
commutes with $G$ and therefore $\lambda(X_{G}(\ga_H(t)))$ is independent of $t$. Thus,
\[
A_H(\ga_H) = T(f(G)-f'(G)A),
\]
where $A:=\lambda(X_{G}(\ga_H(0)))=G(\ga_H(0))-F(\tau_2(\ga_H(0)))^2$ and, to simplify the notation, we have omitted the dependence of $G$ on $\ga_H(0)$.

Let $T_G=Tf'(G)$ be the period of the corresponding orbit of $G$. Since $A\leq G$ and $f'(G)\geq 0$, we arrive at
\begin{align}
\label{eq:action H}
A_H(\ga_H) & = T(f(G)-f'(G)A) \nonumber\\
& \geq T(f(G)-f'(G)G) \nonumber\\
& = T_G\frac{f(G)-f'(G)G)}{f'(G)} \nonumber\\
& = s(G)T_G,
\end{align}
where $s: (-r_0,0] \to \R$ is given by
\[
s(r)=f(r)/f'(r)-r.
\]
We claim that $s(G)>1$. As a matter of fact, since $\ga_H$ is non-constant, $G(\ga_H(0)) \in (-r_0,0)$. But, by our choice of $f$, $s(0)=1$ and
\[
s'(r) = \frac{-f''(r)f(r)}{f'(r)^2} < 0
\]
for every $r \in (-r_0,0)$. Consequently,
\begin{equation}
\label{eq:bound action H}
A_H(\ga_H) > T_G \geq 1,
\end{equation}
where the last inequality follows from \eqref{eq:flow G}.
\end{proof}

Now notice that
\[
\delta(cT_G) \leq 2c\lceil T_G\rceil +1
\]
for every positive real number $c$. Hence,
\begin{align*}
& q^{-1}(n+2 +\delta(T_G) + \delta(2F(\tau_2(\ga_G(0)))T_G) + (n-2)\delta(\ep T_G)+1) \\
& \leq \frac{2n+3}{q} + \frac{2\lceil T_G\rceil}{q}(1+2F(\tau_2(\ga_G(0)))+(n-2)\ep).
\end{align*}

In order to estimate the last expression, note that, by \eqref{eq:bound action H} and the fact that $q=A_H(\ga_H)$ is an integer,
\[
\frac{\lceil T_G\rceil}{q} \leq 1.
\]
Hence, since $q\geq 2$,
\[
\frac{2n+3}{q} + \frac{2\lceil T_G\rceil}{q}(1+2F(\tau_2(\ga_G(0)))+(n-2)\ep) \leq n+\frac{7}{2}+2(2F(\tau_2(\ga_G(0)))+(n-2)\ep),
\]
which is less than $2n+2$ for $n\geq 2$, whenever $r_0$ and $\ep$ are such that $2(2r_0+(n-2)\ep)<1/2$.

\subsection{Proof of Lemma \ref{lemma:index+}}

First, we need the following result.

\begin{lemma}
If $\ep$ is rational then the set of periodic orbits of $\alpha$ is dense in $S^{2n+1}$.
\end{lemma}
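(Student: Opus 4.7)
The plan is to split $S^{2n+1}$ into $W=\pi^{-1}(U)$ and its complement. Outside $W$ the function $\hat H$ is the constant $C$ and $\hat X_H$ vanishes, so \eqref{eq:Reeb} gives $R_\alpha=C R_\beta$; since $R_\beta$ generates the free $S^1$-action of period one, every orbit outside $W$ is periodic. All the work is inside $W$, where I will exploit that $H=f\circ G$ depends only on the quantities $F(z_i)$.

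Since the $F(z_i)$ Poisson-commute with $H$, their common level sets $T_r=\{F(z_i)=r_i\}$ (for $r$ with all $r_i>0$) are Lagrangian tori invariant under $X_H$. On each $T_r$ I use angular coordinates $\phi_i=\arg z_i$, in which $X_{F(z_i)}=2\pi\partial_{\phi_i}$ and $\lambda(X_{F(z_i)})=F(z_i)$. Combined with the fiber coordinate $\theta$ of the trivialization $W\simeq U\times S^1$, the Reeb field $R_\alpha=X_H+(H-\lambda(X_H))\partial_\theta$ restricts to a constant linear vector field on the $(n+1)$-torus $T_r\times S^1$. A direct computation using $X_H=f'(G)X_G$ yields, after rescaling the $\phi_i$ so every coordinate has period one, the rate vector
\[
v(r)=\bigl(-f'(G),\,-2r_2f'(G),\,-\ep f'(G),\ldots,-\ep f'(G),\,f(G)-f'(G)(G-r_2^2)\bigr).
\]

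A linear flow on $T^{n+1}$ is periodic iff its velocity is a nonzero scalar multiple of an integer vector, i.e.\ iff all its component ratios are rational. Setting $h(G):=f(G)/f'(G)-G$ and dividing $v(r)$ by $-f'(G)$, the ratios become $(1,2r_2,\ep,\ldots,\ep,-h(G)-r_2^2)$. Under the hypothesis $\ep\in\Q$, this vector is rational precisely when $r_2\in\Q$ and $h(G)\in\Q$. The first condition is manifestly dense, and for the second the defining properties of $f$ give
\[
h'(G)=-\frac{f(G)f''(G)}{f'(G)^{2}}<0\quad\text{on}\ (-r_0,0),
\]
so $h$ is a strictly decreasing continuous map onto an interval and $h^{-1}(\Q)$ is dense in $(-r_0,0)$. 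Since $G$ depends linearly on $r_1$ with the other $r_i$ fixed, after choosing $r_2\in\Q$ near the target I can adjust $r_1$ to force $G\in h^{-1}(\Q)$ while staying arbitrarily close to any prescribed $r$.

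Hence the periodic locus $A=\{r:r_2\in\Q,\,h(G(r))\in\Q\}$ is dense in the $r$-space $\{r_i\geq 0\}$ relevant to $W$, and for each $r\in A$ every Reeb orbit lying on $T_r\times S^1$ is periodic. Approximating any point of $W$ first by a nearby point with all $z_i\neq 0$ and then by one whose $r$-vector lies in $A$ gives density of periodic orbits inside $W$; combined with the trivial density outside $W$, this proves the lemma. The main technical hurdle is the bookkeeping behind $v(r)$: the unexpected $r_2^2$ term in the $\dot\theta$-component comes from the quadratic appearance of $F(z_2)$ in $G$, and one must carefully reconcile the $2\pi$-periods of the $\phi_i$ with the unit period of $\theta$ before invoking the rationality criterion.
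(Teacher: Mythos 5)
Your proof is correct and ends at exactly the same place as the paper's: the whole problem reduces to showing that $\{r:\ r_2\in\Q,\ s(G(r))\in\Q\}$ (your $h$ is the paper's $s$) is dense, which follows from $\ep\in\Q$ and the strict monotonicity $s'<0$ on $(-r_0,0)$. The packaging differs slightly---the paper argues in three steps (the $G$-orbit closes when $r_2\in\Q$, hence so does the $H$-orbit, and then the Reeb orbit over it closes iff the action $A_H(\ga_H)=T_G(s(G)+F^2)$ is rational), whereas you fold all of this into one step by observing that $R_\alpha$ restricts to a constant linear vector field on each torus $T_r\times S^1$ and invoking the rationality criterion for linear flows on tori. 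This is a pleasant geometric reformulation, but since the rate vector you compute is precisely the derivative data behind the paper's action formula and the key density step is identical, I would classify it as the same argument rather than a genuinely different route.

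One small point worth making explicit: your division by $-f'(G)$ is only legitimate on $U_0=\{-G<r_0\}$, where $f'>0$; on $W\setminus\pi^{-1}(U_0)$ the Reeb field is the constant multiple $C\,\partial_\theta$ and every orbit is closed, so the rate-vector analysis is only needed on $U_0$ (which is in fact where the paper restricts attention from the start).
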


\begin{proof}
Let, as in the previous section, $U_0=\{x\in U; -G(x)<r_0\}$. By the construction of $\alpha$, it is enough to show that the periodic orbits are dense in $\pi^{-1}(U_0)$. Consider the subset
\[
S=\{(q_1,p_1,\dots,q_n,p_n)\in U_0;\, F(q_2,p_2) \in \Q\}.
\]
This subset is dense $U_0$ and it is clear from \eqref{eq:flow G} that if $\ep$ is rational then every orbit of $G$ with initial condition in $S$ is closed and has rational period. Therefore, since the Hamiltonian flow of $H$ is a reparametrization of the Hamiltonian flow of $G$, every orbit of $H$ with initial condition in $S$ is closed.

Given $x \in S$, let $\ga_H$ be a closed orbit of $H$ such that $\ga_H(0)=x$. Let $\ga$ be an orbit of $\alpha$ whose projection is $\ga_H$. From \eqref{eq:action} we conclude that $\ga$ is a segment of a closed orbit if and only if $A_H(\ga_H)$ is rational (here we are using the fact that the action is homogeneous by iterations). Let $T$ be the period of $\ga_H$ and $T_G$ the period of the corresponding orbit of $G$. By \eqref{eq:action H},
\begin{align*}
A_H(\ga_H) & = T(f(G)-f'(G)(G-F^2)) \\
& = T_G\frac{f(G)-f'(G)(G-F^2)}{f'(G)} \\
& = T_G(s(G)+F^2),
\end{align*}
where, to simplify the notation, we are omitting the dependence of $G$ on $\ga_H(0)$ and of $F$ on $\tau_2(\ga_H(0))$ and, as in the previous section,
\[
s(r)=f(r)/f'(r)-r.
\]
Since $x \in S$, $T_G$ and $F^2$ are rational. Consequently, it is enough to show that the set
\[
C:=\{r \in (-r_0,0);\,s(r) \in \Q\}
\]
is dense in $(0,r_0)$. As a matter of fact, if $C$ is dense we can fix the second coordinate of $x$ (therefore remaining in $S$) and vary a little bit one of the other coordinates in order that $r$ belongs to $C$ and therefore the corresponding orbit has rational action.

But the density of $C$ follows from the fact that $s$ is smooth and $s'(r)< 0$ for every $r \in (-r_0,0)$.
\end{proof}

Thus, choosing $\ep$ rational we have that the set of periodic orbits of $\alpha$ is dense in $S^{2n+1}$. Consequently, by the continuity of the mean index with respect to the initial condition, it is enough to show that there exists a positive constant $b$ such that
\begin{equation}
\label{eq:index positive bga}
\mi(\ga)>bT
\end{equation}
for every periodic orbit $\ga$ with period $T$. It is clearly true if $\ga$ lies outside $\pi^{-1}(U_0)$. So suppose that $\ga$ lies in $\pi^{-1}(U_0)$ and let $\ga_H=\pi\circ\ga$ be the corresponding orbit of $H$. Clearly, the inequality \eqref{eq:index positive bga} holds if $\ga_H(t)\equiv 0$ so we can suppose that $\ga_H$ is non-constant. We have from \eqref{eq:index bga} that
\[
\mu(\ga) \geq q(2n+2) -\delta(T_G) - \delta(2F(\tau_2(\ga_G(0)))T_G) - (n-2)\delta(\ep T_G) -1,
\]
where $T_G$ is the period of the corresponding orbit of $G$ and $q$ is such that $[\ga]=q\mathfrak f$. Consequently,
\begin{align*}
\mi(\ga) & = \lim_{k\to\infty} \mu(\ga^k)/k \\
& \geq \lim_{k\to\infty} k^{-1}(kq(2n+2) -\delta(kT_G) - \delta(2F(\tau_2(\ga_G(0)))kT_G) - (n-2)\delta(\ep kT_G)).
\end{align*}
But it is easy to see that
\[
\lim_{k\to\infty} \delta(kx)/k = 2x
\]
for every $x\in \R$. Therefore,
\[
\mi(\ga) \geq q(2n+2) - 2T_G(1+2F(\tau_2(\ga_G(0))) + (n-2)\ep).
\]
We claim that $q>T/2$. In fact, by \eqref{eq:action H},
\[
q \geq T(f(G)-f'(G)G).
\]
However, the function $g(r):=f(r)-f'(r)r$ satisfies, by our choice of $f$, $g(-r_0)=C>1/2$ and
\[
g'(r) = -f''(r)r > 0
\]
for every $r\in (-r_0,0)$. Hence, $f(G)-f'(G)G > 1/2$ which implies that $q>T/2$.

Thus, since $T_G=f'(G(\ga_H(0)))T<T$,
\[
\mi(\ga) \geq T(n+1 - 2(1+2F(\tau_2(\ga_G(0))) + (n-2)\ep)).
\]
Take $r_0$ and $\ep$ sufficiently small such that $2(1+2r_0 + (n-2)\ep) < n+1$. Then,
\[
\mi(\ga)>bT
\]
with $b=n+1-2(1+2r_0+\ep(n-2))>0$.

\subsection{Proof of Lemma \ref{lemma:no new orbit}}
\label{sec:proof no new orbit}

Firstly, we need the following result on the Poincar\'e map $P: S \to P(S)$. Recall that $S=W\times \{0\}$ with the coordinates $(q_1,p_1,\dots,q_n,p_n,\theta)$ discussed before Lemma \ref{lemma:coords}. Consider a small disk $D \subset \R^2$ centered at the origin such that $D^n\times \{0\} \subset S$ and let $V=D^n \times S^1$. In what follows, we will identity $S$ with an open neighborhood of the origin in $\R^{2n}$. 

\begin{lemma}
\label{lemma:P}
There exist maps $Q_i: D^n \to D$ such that
\[
P(q_1,p_1,\dots,q_n,p_n) = (Q_1(q_1,p_1,\dots,q_n,p_n),\dots,Q_n(q_1,p_1,\dots,q_n,p_n))
\]
for every $(q_1,p_1,\dots,q_n,p_n) \in D^n$, where each $Q_i$ satisfies the following conditions:
\begin{itemize}
\item[i)] There are maps $\rho_i: D^n \to [0,2\pi)$ such that $Q_i(z_1,\dots,z_n)=e^{-\sqrt{-1}\rho_i(z_1,\dots,z_n)}z_i$, where we are identifying $(q_j,p_j)$ with $z_j=q_j+\sqrt{-1}p_j$ for every $j$.
\item[ii)] Taking $r_0<1/2$, $\ep<1$ and $D$ sufficiently small we have that $\rho_i(z_1,\dots,z_n)\neq 0$ whenever $z_i\neq 0$.
\item[iii)] Given $\phi>0$ we can choose $r_0$ and $\ep$ sufficiently small such that $\rho_i(z_1,\dots,z_n) \in [0,\phi)$ for every $2\leq i \leq n$. Moreover, we can choose $D$ sufficiently small such that if $z_2=0$ and $z_1\neq 0$ then $\rho_1(z_1,\dots,z_n) \in (\pi-\phi,\pi)$.
\end{itemize}
In particular, choosing $r_0$, $\ep$ and $D$ properly, we have that every periodic orbit of $P|_{\Sigma \cap V}$ different from the origin has period bigger than two.
\end{lemma}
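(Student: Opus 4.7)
The plan is to compute the return map $P$ directly from an explicit formula for the Reeb flow on $\bar W$. In the Darboux-type coordinates $(q_1,p_1,\dots,q_n,p_n,\theta)$ on $\bar W$ we have $\bar\beta = \lambda + d\theta$ and $R_{\bar\beta} = \partial_\theta$, so \eqref{eq:Reeb'} yields $R_\balpha = X_H + (H - \lambda(X_H))\partial_\theta$. A direct calculation, analogous to the one in the proof of Lemma \ref{lemma:action}, gives $\lambda(X_G) = G - F(q_2,p_2)^2$; hence both $H = f\circ G$ and $\lambda(X_H) = f'(G)\bigl(G - F(q_2,p_2)^2\bigr)$ are first integrals of the $H$-flow. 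Thus $\dot\theta$ is constant along every trajectory; since the $\theta$-fibre has length $1/2$, the first return time is $T = 1/(2D)$ where $D(G, F_2) := f(G) - f'(G)G + f'(G)F_2^2$. Consequently $P(x) = \vr^H_T(x) = \vr^G_s(x)$ with $s = f'(G)T = f'(G)/(2D)$, and substituting into \eqref{eq:flow G} yields (i) with $\rho_1 = 2\pi s$, $\rho_2 = 4\pi F(z_2) s$ and $\rho_i = 2\pi\epsilon s$ for $i \geq 3$, all taken modulo $2\pi$.

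For (ii), the key estimate is $s < 1$, which follows from $2D \geq 2f(G) \geq 2C > 1 \geq f'(G)$ using $C > 1/2$; this gives $\rho_1 \in (0, 2\pi)$, so $\rho_1 \neq 0$ whenever $z_1 \neq 0$. Shrinking $D$ forces $2F(z_2) s \in (0, 1)$, yielding $\rho_2 \neq 0$ when $z_2 \neq 0$, while $\epsilon < 1$ combined with $s < 1$ gives $\epsilon s \in (0, 1)$ and thus $\rho_i \neq 0$ for $i \geq 3$. The first two inequalities of (iii) are similarly elementary: $\rho_2 \leq 4\pi F(z_2) < \phi$ and $\rho_i \leq 2\pi\epsilon < \phi$ after further shrinking $D$ and $\epsilon$.

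The main technical step is the remaining part of (iii), namely $\rho_1 \in (\pi - \phi, \pi)$ when $z_2 = 0$ and $z_1 \neq 0$, which requires the strict inequality $s < 1/2$ together with $s$ close to $1/2$. With $F_2 = 0$, $s < 1/2$ is equivalent to $g(G) := f(G) - f'(G)(1 + G) > 0$ on $(-r_0, 0)$. One checks directly that $g(0) = 0$ and $g'(G) = -f''(G)(1 + G)$, which is strictly negative on $(-r_0, 0)$ whenever $r_0 < 1$, by the convexity assumption $f'' > 0$ there; hence $g > 0$ on that interval. The lower bound $s > 1/2 - \phi/(2\pi)$ then follows from continuity, since $s(0, 0) = 1/2$, by further shrinking $D$.

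For the last assertion, note that $G$ and $F(z_2)$ are $P$-invariant, so the phases $\rho_i$ are constant along orbits of $P$. Thus $P^k(x) = x$ is equivalent to $k\rho_i(x) \in 2\pi\Z$ for every $i$ with $z_i \neq 0$, which for $k \in \{1, 2\}$ forces $\rho_i \in \{0, \pi\}$. By (ii) no such $\rho_i$ vanishes, and by (iii) with $\phi < \pi$ no $\rho_i$ with $i \geq 2$ equals $\pi$; in the remaining case where only $z_1 \neq 0$, we have $z_2 = 0$ and the inclusion $\rho_1 \in (\pi - \phi, \pi)$ excludes $\rho_1 = \pi$ as well. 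So any nonzero $x \in \Sigma \cap V$ has minimal period greater than $2$. The principal obstacle throughout the argument is the strict monotonicity of $g$, encoding the strict convexity of $f$, which alone rules out a first-coordinate period-$2$ orbit.
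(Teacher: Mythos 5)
Your proof is correct and follows essentially the same route as the paper: compute the return time from the condition that the $\theta$-coordinate advances by $1/2$, observe that the advance rate $H - \lambda(X_H)$ is a first integral (which the paper establishes via the action computation in Lemma \ref{lemma:action}), and then read off the phases $\rho_i$ from the explicit $G$-flow \eqref{eq:flow G}. The only cosmetic difference is in the monotonicity check for (iii): you show $s<1/2$ by verifying $g(r):=f(r)-f'(r)(1+r)>0$ on $(-r_0,0)$ via $g(0)=0$ and $g'(r)=-f''(r)(1+r)<0$, while the paper writes the same constraint as $s(r):=f(r)/f'(r)-r>1$ and differentiates that instead; both rest on $f''>0$. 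A small caution on presentation: you overload the symbol $D$, using it both for the disk in $\R^2$ from the statement and for the quantity $f(G)-f'(G)G+f'(G)F_2^2$, which should be renamed to avoid confusion.
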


\begin{remark}
The last assertion of the lemma also follows from Lemma \ref{lemma:action}. In fact, suppose that $P$ has a periodic orbit $x$ different from the origin with period two and let $\bga$ be the corresponding orbit of $\balpha$. Let $\ga$ be the lift of $\bga$ to $S^{2n+1}$ (which is a closed orbit of $\alpha$). Since $x$ has period two, we conclude by \eqref{eq:action} that $\ga$ has action equal to one, contradicting Lemma~\ref{lemma:action}.
\end{remark}

\begin{proof}
We have that
\[
P(x) = \bar\pi\circ\vr^{\balpha}_{T(x)}(x,0) = \vr_H^{T(x)}(x),
\]
where $\vr^{\balpha}_t$ is the Reeb flow of $\balpha$, $\bar\pi: V = D^n\times S^1 \to D^n$ is the projection and $T(x)$ is the return time to $\Sigma$. The existence of the maps $Q_i$ and item (i) are clear from  the construction of $\balpha$ and \eqref{eq:flow G}. In order to prove itens (ii) and (iii), note that, by \eqref{eq:action}, $T(x)$ is uniquely characterized by the equation
\begin{align*}
1/2 & = \int_ 0^{T(x)} d\theta(\bar\eta(t))\,dt \\
& = \int_0^{T(x)}  \beta(R_\balpha(\bar\eta(t))) - \lambda(X_{H}(\eta_H(t)))\,dt \\
&  = \int_0^{T(x)} H(\eta_H(t)) - \lambda(X_{H}(\eta_H(t)))\,dt \\
& =  \int_0^{T(x)} f(G(x)) - f'(G(x))\lambda(X_{G}(\eta_H(t)))\,dt \\
& = T(x)f(G(x)) - f'(G(x))\int_0^{T(x)} \lambda(X_{G}(\eta_H(t)))\,dt,
\end{align*}
where $\eta_H$ is the orbit of $H$ given by the projection of the Reeb orbit $\bar\eta$ of $\balpha$ satisfying $\bar\eta(0)=(x,0)$. As explained in the proof of Lemma \ref{lemma:action}, $\lambda(X_{G}(\eta_H(t)))$ does not depend on $t$ and therefore
\[
T(x)(f(G(x)) - f'(G(x))A) = 1/2,
\]
where $A=\lambda(X_{G}(\eta_H(0)))$. But, arguing as in the proof of Lemma \ref{lemma:index+}, we conclude that
\[
1/2 = T(f(G)-f'(G)A) \geq T(f(G)-f'(G)G) \geq T/2.
\]
Therefore, $T\leq 1$, where, for short, we are omitting the dependence of $G$ and $T$ on $x$. Assume that $r_0<1/2$ and $\ep<1$. Then it follows from \eqref{eq:flow G} and the properties of $f$ that every non-constant periodic orbit of $H$ has period bigger than one. Consequently, taking $D$ sufficiently small, we have that $\rho_i(z_1,\dots,z_n)\neq 0$ whenever $z_i\neq 0$. Moreover, the fact that $T\leq 1$ and \eqref{eq:flow G} show that given $\phi>0$ one can choose $r_0$ and $\ep$ sufficiently small such that $\rho_i(z_1,\dots,z_n) \in [0,\phi)$ for every $2\leq i \leq n$.

Now, suppose that $z_2=0$ and $z_1\neq 0$. Let $T_G=f'(G)T$ be the period of the corresponding orbit $\eta_G$ of $G$. By \eqref{eq:action H},
\[
1/2 = s(G)T_G,
\]
where $s(r)=f(r)/f'(r)-r$ satisfies $s(r)>1$ for every $r \in (-r_0,0)$ (note that $A=G$ because $z_2=0$). Thus, given $\delta>0$ one can choose $D$ sufficiently small such that $s(G)$ is close enough to one so that $T_G\in (1/2-\delta,1/2)$. By \eqref{eq:flow G}, this implies, choosing $D$ and $\delta$ sufficiently small, that $\rho_1(z_1,\dots,z_n) \in (\pi-\phi,\pi)$.
\end{proof}

Let $P_{\ep'}: S'' \to S'$ be the first return map of $\balpha_{\ep'}$, where $S'' \subset S'$ is chosen sufficiently small such that $P_{\ep'}$ is a well defined symplectic embedding. Lemma \ref{lemma:no new orbit} is an immediate consequence of the following result.

\begin{lemma}
There exist $\ep_0'>0$ and a neighborhood of the origin such that, for all $\ep'<\ep'_0$, the only 2-periodic orbit of $P_{\ep'}$ entirely contained in this neighborhood is the origin. 
\end{lemma}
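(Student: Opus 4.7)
The plan is to obtain an explicit formula for $P_{\ep'}$ near the origin via the Hamiltonian composition formula for time-dependent flows, and then exclude non-trivial 2-periodic orbits by combining the diagonal rotation structure of the unperturbed $P$ from Lemma \ref{lemma:P} with a careful perturbative analysis in $\ep'$.

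First I would use the identity: if $\psi_t=\vr^{H_t}_t$ and $\eta_t=\vr^{K_t}_t$ are time-dependent Hamiltonian flows, then $\psi_t\circ\eta_t$ is the flow of $H_t+K_t\circ\psi_t^{-1}$. Setting $K_t(y)=\ep'\chi'(t)\bump(\psi_t(y))S(y)$ matches the definition of $H^{\ep'}_t$. Since $\psi_t$ fixes the origin, one can choose a neighborhood $U_0$ small enough that $\psi_t(U_0)\subset B'$ for every $t\in[0,1/2]$; then $\bump(\psi_t(y))\equiv 1$ and hence $K_t=\ep'\chi'(t)S$, so $\eta_{1/2}=\vr^{S}_{\ep'/2}$. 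Consequently, on $U_0$,
\[
P_{\ep'} = P'\circ \vr^{S}_{\ep'/2}.
\]
Using Lemma \ref{lemma:returns} to identify $P'$ with $P$ up to a symplectic conjugation, I may work in coordinates in which $P$ has the diagonal rotation form of Lemma \ref{lemma:P}, at the cost of conjugating the shear into a $C^\infty$-small symplectic map $\tilde S_{\ep'}$.

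Next, I would reduce the fixed-point equation $P_{\ep'}^2(x)=x$ to the four-dimensional subspace $\{z_3=\cdots=z_n=0\}$. For every $i\geq 3$, the $i$-th coordinate of $P_{\ep'}^2(x)$ is obtained by two rotations of angles $\rho_i$ at the intermediate iterates, each bounded by $\phi$ by Lemma \ref{lemma:P}(iii), plus corrections of order $\ep'$ coming from $\tilde S_{\ep'}$. Taking $\phi<\pi/2$ from Lemma \ref{lemma:P} and $\ep'_0$ small, the total angular displacement in each such plane is strictly between $0$ and $2\pi$, so the closing-up condition forces it to equal zero. Together with Lemma \ref{lemma:P}(ii) this forces $z_i=0$ for all $i\geq 3$.

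The third and hardest step is to rule out nontrivial fixed points of $P_{\ep'}^2$ on the remaining $(z_1,z_2)$-plane. This is the main obstacle, because $dP(0)|_{z_1}=-I$ and $dP(0)|_{z_2}=I$ imply $dP^2(0)=I$ on this whole four-dimensional center subspace, so neither the implicit function theorem nor naive continuity of fixed points applies. One must expand $P_{\ep'}^2(x)-x$ to higher order in $x$ and $\ep'$, using the explicit expansions of $\rho_1$ and $\rho_2$ derivable from the definitions of $H$ and $G$—in particular the strict quadratic twist $\rho_1=\pi-\pi^2 f''(0)|z_1|^2+O(|z_1|^4)$ on the $z_1$-axis and the strictly slower quartic twist in the $z_2$-direction forced by the $F(z_2)^2$ term in $G$—and then verify that the only small solution is $x=0$. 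The difficulty is that the $1{:}2$-resonance $dP(0)|_{z_1}=-I$ is exactly the classical configuration in which Birkhoff-type bifurcations of period-$2$ orbits occur under generic symplectic perturbation, so closing out this possibility forces one to exploit both the conjugated shear's specific structure and the odd symmetry $P_{\ep'}(-x)=-P_{\ep'}(x)$ inherited from antipodal invariance, rather than invoking any general perturbation theorem.
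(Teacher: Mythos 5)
Your first two steps are consistent in spirit with the paper's argument, but the proposal breaks down in the third step, where it leaves the crucial part of the proof unfinished and also misreads the linearization.

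The key point you miss is that the shear was inserted precisely so that $dP_{\ep'}^2(0)$ is \emph{not} the identity on the four-dimensional $(z_1,z_2)$-subspace. Since $dP_{\ep'}(0)|_{z_1}=-S_{\ep'}(1/2)$ and $dP_{\ep'}(0)|_{z_2}=S_{\ep'}(1/2)$, squaring gives $S_{\ep'}(1/2)^2=\left(\begin{smallmatrix}1&\ep'\\0&1\end{smallmatrix}\right)$ on each block, whose fixed subspace is only one-dimensional. Hence
\[
\ker\bigl(dP_{\ep'}^2(0)-\Id\bigr)=\mathrm{span}\{\partial q_1,\partial q_2\},
\]
a two-dimensional subspace, not a four-dimensional center. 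Your statement that ``$dP^2(0)=\Id$ on this whole four-dimensional center subspace'' is true only for the \emph{unperturbed} map $P$. Because of this confusion, the Birkhoff $1{:}2$-resonance framework you invoke is not the right picture, and the appeal to an odd symmetry $P_{\ep'}(-x)=-P_{\ep'}(x)$ is dubious: the coordinates of Lemma \ref{lemma:coords} live on $\RP^{2n+1}$, where the antipodal map has already been quotiented out, so no such $\Z_2$-equivariance of the return map is available.

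The paper's argument is by contradiction and exploits exactly the reduced two-dimensional kernel. A sequence of nontrivial $2$-periodic orbits $x_j\to 0$ must asymptotically align with $\ker(dP_{\ep'}^2(0)-\Id)=\mathrm{span}\{\partial q_1,\partial q_2\}$, hence lie in a narrow cone $C_\theta$ around this $2$-plane. One then argues: $z_i=0$ for $i\geq 3$ by the small rotations $\rho_i$ (your step two); $z_2=0$ because otherwise $\tau_2\circ P$ rotates $(q_2,p_2)$ slightly while the shear translates $q_2$, preventing closing-up; and finally $z_1=0$ because on the plane $z_2=\cdots=z_n=0$ the unperturbed $P$ rotates $(q_1,p_1)$ clockwise by an angle close to $\pi$ and preserves $F\circ\tau_1$, whereas the shear strictly increases or decreases $F\circ\tau_1$ according to the sign of $q_1p_1$ --- so in each of the three cases ($q_1p_1<0$, $q_1p_1>0$, $p_1=0$) one reads off $P_{\ep'}^2(x)\neq x$. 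No higher-order Taylor expansion of the twist, no bifurcation analysis, and no symmetry beyond what Lemma \ref{lemma:P} already provides is needed. Your proposed expansion-based approach might in principle be carried through, but as written it is a sketch, not a proof, and the ingredients you name (odd symmetry, generic Birkhoff theory) would not in fact close the argument.
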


\begin{proof}
Throughout the proof, we will use the coordinates established in Lemma \ref{lemma:coords}. For short, we will omit the superscript in $(q_1',p_1',\dots,q_n',p_n')$. First of all, note that $P'$ and $P_{\ep'}$ are given by the time $1/2$ maps of the Hamiltonian flows of $H_t$ and $H^{\ep'}_t$ respectively. Moreover, as explained in Section \ref{sec:proof main result}, by Lemma \ref{lemma:returns} we can assume that $P'=P$.

Arguing by contradiction, suppose that the lemma is not true. Then there exists a sequence of 2-periodic orbits $\{x_j=P^2_{\ep'}(x_j),P_{\ep'}(x_j)\}$ converging to the origin such that $x_j \neq 0$. Take $U \subset B'$ sufficiently small such that
\[
P^2_{\ep'}(x)=dP^2_{\ep'}(0)x+{\mathcal R}_{\ep'}(x),
\]
for every $x \in U$, where ${\mathcal R}_{\ep'}$ satisfies
\[
\lim_{x\to 0} \frac{{\mathcal R}_{\ep'}(x)}{\|x\|}=0.
\]
Extracting a subsequence if necessary, suppose that $x_j/\|x_j\| \to v$. Then
\[
v = \lim_{j\to\infty} \frac{x_j}{\|x_j\|} = \lim_{j\to\infty} \frac{P^2_{\ep'}(x_j)}{\|x_j\|} = dP^2_{\ep'}(0)v,
\]
which implies that $v \in \ker(dP^2_{\ep'}(0)-\Id)=\text{span}\{\partial q_1,\partial q_2\}$. Thus, given $\theta>0$ there exists $j_0$ such that $x_j$ lies in the subset
\[
C_\theta=\{(q_1,p_1,\dots,q_n,p_n); \frac{\la (q_1,p_1,\dots,q_n,p_n),v \ra}{\|(q_1,p_1,\dots,q_n,p_n)\|\|v\|} \in (1-\theta,1+\theta)\ \text{for some}\ v \in \text{span}\{\partial q_1,\partial q_2\}\}
\]
for every $j>j_0$. From now on, to simplify the notation, we will omit the subscript $j$.

Define the Hamiltonian
\[
\S_t(q_1,p_1,\dots,q_n,p_n) = \ep'\chi'(t)S(q_1,p_1,\dots,q_n,p_n)
\]
so that
\[
H^{\ep'}_t|_{B'} = (H_t \# \S_t)|_{B'}.
\]
Let $W$ be a sufficiently small neighborhood of the origin such that $W \subset B'$, $W \subset D^n$ (where $D$ is given by the previous lemma) and every periodic orbit of $H^{\ep'}_t$ with period two and initial condition in $W$ is contained in $B'$. Therefore,
\[
P_{\ep'}^2(x)=(P\circ \vr^{\S_t}_{1/2})^2(x).
\]
Write
\[
x=(q_1^0,p_1^0,\dots,q_n^0,p_n^0),\, P_{\ep'}(x)=(q_1^{1/2},p_1^{1/2},\dots,q_n^{1/2},p_n^{1/2})\ \text{and}\ P_{\ep'}^2(x)=(q_1^1,p_1^1,\dots,q_n^1,p_n^1).
\]
It follows from Lemma \ref{lemma:P} and the identity
\[
\vr^{\S_t}_{1/2}(q_1,p_1,\dots,q_n,p_n)=(q_1+\ep'p_1/2,p_1,q_2+\ep'p_2/2,p_2,q_3,p_3,\dots,q_n,p_n)
\]
that $(q_i^0,p_i^0)=(0,0)$ for every $i \in \{3,\dots,n\}$ (because, for all $i \in \{3,\dots,n\}$, $Q_i$ is a small rotation and $\tau_i\circ\vr_{1/2}^{\S}(q_1,p_1,\dots,q_n,p_n)=(q_i,p_i)$, where $\tau_i (q_1,p_1,\dots,q_n,p_n)=(q_i,p_i)$ denotes the projection onto the $i$-th factor). Hence, $(q_i^0,p_i^0)\neq (0,0)$ for some $i \in \{1,2\}$.

We claim that $(q_2^0,p_2^0)=(0,0)$. As a matter of fact, if $(q_2^0,p_2^0)\neq (0,0)$ then, by Lemma \ref{lemma:P}, choosing $r_0$ sufficiently small we have that $\tau_2\circ P(x)$ is a small rotation of $(q_2^0,p_2^0)$. Since $x$ lies in $C_\theta$, one can check that, choosing $\theta$ sufficiently small, $(P\circ \vr^{\S_t}_{1/2})^2(x)\neq x$, a contradiction.

Thus, $x$ lies in the plane $q_2=p_2=\dots=q_n=p_n=0$. Choose $\theta$, $\ep_0'$, $r_0$ and $W$ sufficiently small such that
\[
P(C_\theta \cap W) \subset C_{2\theta} \quad\text{and}\quad \vr^{\S_t}_{1/2}(C_\theta \cap W) \subset C_{2\theta}
\]
for every $\ep'<\ep_0'$. The existence of $\theta$, $\ep_0'$, $r_0$ and $W$ readily follows from Lemma \ref{lemma:P} and the explicit description of $\vr^{\S_t}_{1/2}$. 

We shall show that $(q_1^0,p_1^0)=(0,0)$, contradicting our assumption on $x$. In order to prove this, define
\[
C^+_\theta = \{(q_1,p_1,\dots,q_n,p_n) \in C_\theta;\, q_1p_1 > 0\}\quad\text{and}\quad C^-_\theta = \{(q_1,p_1,\dots,q_n,p_n) \in C_\theta;\, q_1p_1 < 0\}.
\]
Since
\[
\tau_1\circ \vr^{\S_t}_{1/2}(q_1,p_1,\dots,q_n,p_n)=(q_1+\ep'p_1/2,p_1),
\]
we have that, choosing $\theta$ sufficiently small,
\begin{equation}
\label{eq:F1}
F(\tau_1\circ \vr^{\S_t}_{1/2}(y)) < F(y)
\end{equation}
for every $y \in C^-_{2\theta} \cap W \cap \{q_2=\dots=p_n=0\}$ and
\begin{equation}
\label{eq:F2}
F(\tau_1\circ \vr^{\S_t}_{1/2}(y)) > F(y)
\end{equation}
for every $y \in C^+_{2\theta} \cap W \cap \{q_2=\dots=p_n=0\}$.

We claim that if $(q_1^0,p_1^0)\neq (0,0)$ then $P_{\ep'}^2(x)\neq x$. Indeed, notice that 
\begin{equation}
\label{eq:F3}
F(\tau_1\circ P(y))=F(\tau_1(y))
\end{equation}
for every $y$ liying in the plane $q_2=\dots=p_n=0$. Let us consider the following three possible cases:

\vskip .3cm
\noindent {\bf (I) $(q_1^0,p_1^0,0,\dots,0) \in C_\theta^-$.}
\vskip .2cm

In this case, by \eqref{eq:F1} and \eqref{eq:F3},
\[
F(\tau_1(P_{\ep'}(x)) < F(\tau_1(x)).
\]
If $p_1^{1/2}=0$ then
\[
F(\tau_1(P_{\ep'}^2(x)))=F(\tau_1(P_{\ep'}(x))) < F(\tau_1(x))
\]
implying that $P_{\ep'}^2(x)\neq x$. If $\sign p_1^{1/2}=\sign p_1^{0}$ then $P_{\ep'}^2(x)\neq x$ because
\[
P_{\ep'}^2(x)=P\circ\vr^{\S_t}_{1/2}(P_{\ep'}(x))
\]
and $P$ rotates $\vr^{\S_t}_{1/2}(P_{\ep'}(x))$ clockwise with an angle less than $\pi$ (in the plane $q_2=\dots=p_n=0$). If $\sign p_1^{1/2}=-\sign p_1^{0}$ then, by \eqref{eq:F1} and \eqref{eq:F3},
\[
F(\tau_1(P_{\ep'}^2(x))) = F(\tau_1\circ \vr^{\S_t}_{1/2}(P_{\ep'}(x)))) < F(\tau_1(P_{\ep'}(x))) < F(\tau_1(x)),
\]
and consequently $P_{\ep'}^2(x)\neq x$. (Note that, since $P$ rotates $x$ with an angle less than and close to $\pi$ and $x$ lies in $C_\theta$, $\sign q_1^{1/2}=-\sign q_1^{0}$.) 

\vskip .3cm
\noindent {\bf (II) $(q_1^0,p_1^0,0,\dots,0) \in C_\theta^+$.}
\vskip .2cm

By \eqref{eq:F2} and \eqref{eq:F3},
\[
F(\tau_1(P_{\ep'}(x))) > F(\tau_1(x)).
\]
Since $P$ rotates $\vr^{\S_t}_{1/2}(x)$ clockwise with an angle less than and close to $\pi$, $\sign p_1^{1/2}=-\sign p_1^{0}$. Hence, by \eqref{eq:F1} and \eqref{eq:F3},
\[
F(\tau_1(P_{\ep'}^2(x))) = F(\tau_1\circ \vr_{1/2}^{\S_t}(P_{\ep'}(x))) > F(\tau_1(P_{\ep'}(x))) > F(\tau_1(x))
\]
and, again, $P_{\ep'}^2(x)\neq x$. (Once more, we are using that $\sign q_1^{1/2}=-\sign q_1^{0}$.)

\vskip .3cm
\noindent {\bf (III) $p_1^0=0$.}
\vskip .2cm

In this last case, we have that
\[
(q_1^{1/2},p_1^{1/2})=\tau_1(P(x))=\tau_1(P_{\ep'}(x))
\]
satisfies $p_1^{1/2}\neq 0$ (since $P$ rotates $x$ with an angle less than and close to $\pi$) and therefore $F(\tau_1\circ \vr_{1/2}^{\S_t}(P_{\ep'}(x))) \neq F(\tau_1\circ P_{\ep'}(x))$. But, by \eqref{eq:F3},
\[
F(\tau_1(P_{\ep'}^2(x))) = F(\tau_1\circ \vr_{1/2}^{\S_t}(P_{\ep'}(x))) \neq F(\tau_1(P_{\ep'}(x))) = F(\tau_1(P(x))) = F(\tau_1(x))
\]
implying that $P_{\ep'}^2(x)\neq x$ and finishing the proof of the lemma.

\end{proof}

\subsection{Proof of Lemma \ref{lemma:gamma0}}

Let $R: [0,1/2] \to \Sp(2)$, $R_\ep: [0,1/2] \to \Sp(2)$ and $S_{\ep'}: [0,1/2] \to \Sp(2)$ be the paths given by $R(t)=e^{2\pi\sqrt{-1}t}$, $R_\ep(t)=e^{2\pi\ep\sqrt{-1}t}$ and
\[
S(t) = \left(
\begin{matrix}
1 & \ep' t \\
0 & 1
\end{matrix} \right),
\]
where $\ep$ is given by the construction of $\alpha$. Let $\Ga$ be the linearized Reeb flow of $\alpha_{\ep'}$ along $\ga_0^{\ep'}$ with respect to the trivialization $\Phi$ defined in Section \ref{sec:dynconvex}. It is clear from the construction of $H^{\ep'}_t$ and the fact that it is $1/2$-periodic that, with $P(t):=R(t)\circ S_{\ep'}(t)$, the linearized Hamiltonian flow of $H^{\ep'}_t$ over the constant solution $\ga_0(t)\equiv 0$ is given by
\[
\Ga(t)= P(t) \oplus S_{\ep'}(t) \oplus \underbrace{R_\ep(t) \oplus \dots \oplus R_\ep(t)}_{n-2\ \text{times}}
\]
for $t \in [0,1/2]$ and $\Ga(t+1/2)=\Ga(t)\circ\Ga(1/2)$ for all $t$. We claim that 
$$
\mu(P)=-1, \, \mu(P^2)=-2, \, b_-(P^2(1))=0 \textrm{ and }b_+(P^2(1))=1.
$$
Indeed, the first equality follows from the fact that $P$ is a small perturbation of $R$, $\mu(R)=-1$ and $R$ is non-degenerate. To compute $\mu(P^2)$, let $\Bott_P: S^1 \to \Z$ be Bott's function associated to $P$. We have that $\Bott_P(z)=-1$ for every $z\neq -1$ because $\Bott_P(1)=\mu(P)=-1$ and $\Bott_P$ is constant on $S^1\setminus \{-1\}$. A computation shows that the splitting numbers at $-1$ are given by
\[
S^\pm_{-1}(P(1/2))=S^\pm_{1}(P^2(1))=0.
\]
Therefore, $\Bott(-1)=-1$ which implies that $\mu(P^2)=-2$. Finally, by the definition of $b_\pm$, it is clear that $b_-(P^2(1))=0$ and $b_+(P^2(1))=1$.

Now, we claim that $\mu(S_{\ep'})=\mu(S_{\ep'}^2)=0$ and $b_-(S_{\ep'}^2(1))=0$ and $b_+(S_{\ep'}^2(1))=1$. In fact, let $\Bott_S: S^1 \to \Z$ be Bott's function associated to $S_{\ep'}$. The spectrum of $S_{\ep'}(t)$ is constant equal to one for every $t$, implying that $\Bott_S$ is constant on $S^1\setminus \{1\}$ and that the mean index of $S_{\ep'}$ given by
\[
\int_{S^1} \Bott_S(z)\,dz
\]
vanishes. As before, a computation shows that
\[
S^\pm_{1}(S_{\ep'}(1/2))=0.
\]
Hence, $\Bott_S(z)=0$ for every $z$ and consequently $\mu(S_{\ep'})=\mu(S_{\ep'}^2)=0$ (actually, $\mu(S_{\ep'}^k)=0$ for every $k$). Finally, by the definition of $b_\pm$, it is clear that $b_-(S_{\ep'}^2(1))=0$ and $b_+(S_{\ep'}^2(1))=1$.

Thus, since $\ep$ is very small (in particular, less than one),
\begin{align*}
\mu(\Ga^2) & =\mu(P^2)+\mu(S_{\ep'}^2)+(n-2)\mu(R_\ep^2) \\
& =-2-n+2 \\
& =-n
\end{align*}
implying, by Lemma \ref{lemma:triv}, that
\[
\mu(\ga_0^{\ep'})=-n+2n+2=n+2.
\]
Finally, by the discussion above,
\[
\mu(\ga^{\ep'}_0(1))+b_-(\ga^{\ep'}_0)-b_+(\ga^{\ep'}_0(1))=n+2-2=n.
\]

\section{Proof of Theorem \ref{thm:example open}}
\label{sec:example open}

\subsection{Generalization of the theorem}
For $n\geq 2$, the proof of Theorem \ref{thm:example} gives us a
dynamically convex and antipodally symmetric contact form $\alpha$ on
$S^{2n+1}$ with a symmetric periodic orbit $\ga$ of period one which is not strongly dynamically convex, i.e.,
\[
\cz(\ga) + b_-(\ga(1)) - b_+(\ga(1)) < n+2.
\]
Consequently, by Theorem \ref{thm:c=>sdc1}, $\psi^*\alpha$ cannot be convex for any contactomorphism $\psi: S^{2n+1}\to S^{2n+1}$ that commutes with the antipodal map.

When $n$ is odd, we have the following, somewhat technical, generalization of Theorem \ref{thm:c=>sdc1}.

\begin{theorem}
\label{thm:c=>sdc2}
Let $\phi_t: \R^{2n+2}\setminus\{0\}\to \R^{2n+2}\setminus\{0\}$, with $t \in \R/\Z$, be a Hamiltonian circle action whose orbits have zero Maslov index. Denote by $\phi=\phi_{1/2}$ the generator of the induced $\Z_2$-action. Let $H: \R^{2n+2}\setminus\{0\} \to \R$ be a convex homogeneous of degree two Hamiltonian $\phi$-invariant  and assume that $\Sigma:=H^{-1}(1)$ is a regular energy level. Let $\ga$ be  a symmetric closed orbit of $H$ on $\Sigma$ of period $T$. Suppose that there exists $x \in \ga(\R)$ such that the linearized Hamiltonian circle action $t \mapsto \Phi(t):=d\phi_t(x)$ along the half orbit $\phi|_{[0,1/2]}(x)$ satisfies
\[
\Bott_\Phi(-1) \leq -(n+1),
\]
where $\Bott_\Phi$ is the Bott's function associated to $\Phi$. Then $\cz(\ga) + b_-(\ga(T)) - b_+(\ga(T)) \geq n+1$. Here, $\cz(\ga)$ denotes the index of $\ga$ viewed as a Hamiltonian closed orbit and we are using the usual (constant) symplectic trivialization of $T\R^{2n+2}$.
\end{theorem}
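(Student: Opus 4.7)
The proof follows the outline of Theorem~\ref{thm:c=>sdc1}: we realize $\gamma$ as the second iterate of a period-$T/2$ orbit of a reduced Hamiltonian, and then invoke Proposition~\ref{prop:sdc_2nd_iterate}. The halving, which in the antipodal setting is supplied by the symmetry $x\mapsto -x$ itself, is now implemented in a rotating frame generated by the circle action. Let $F$ be the Hamiltonian generating $\phi_t=\psi^F_t$; by the $\phi$-invariance of $H$ we have $\{F,H\}=0$, so the two flows commute. Set $K:=H-F/T$. Since $\psi^H_{T/2}(x)=\phi(x)$, the curve $\bar\gamma(t):=\phi_{-t/T}(\gamma(t))$ is a closed orbit of $K$ of period $T/2$ based at $x$, and the iteration formula gives $\Psi_K|_{[0,T]}=\Lambda^2$ with $\Lambda:=\Psi_K|_{[0,T/2]}$. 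In particular $\Lambda(T/2)=B^{-1}A$ (where $A:=\Psi_H(T/2)$) and $\Psi_H(T)=\Lambda(T/2)^2$, hence $b_\pm(\gamma(T))=b_\pm(\Lambda(T/2)^2)$.

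The identity $\mu(\gamma)=\mu(\Lambda^2)$ follows from the Maslov-zero assumption on circle orbits: the paths $\Psi_H|_{[0,T]}$ and $\Psi_K|_{[0,T]}$ differ pointwise by the loop $t\mapsto d\phi_{-t/T}(\gamma(t))$, which is freely homotopic to the circle-action orbit (traversed backwards) and hence has vanishing Maslov index. By Bott's formula, $\mu(\gamma)=\mu(\Lambda)+\Bott_\Lambda(-1)$. The heart of the proof is to establish $\mu(\Lambda)\geq n+1$. The path $\Lambda$ satisfies $\dot\Lambda=JA_K(t)\Lambda$ with $A_K(t)=d^2H(\bar\gamma(t))-d^2F(\bar\gamma(t))/T$, and convexity of $H$ gives $d^2H\geq 0$ pointwise, hence $A_K(t)\geq -d^2F(\bar\gamma(t))/T$. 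Theorem~\ref{thm:comparison} then yields $\Bott_\Lambda(z)\geq\Bott_{\Lambda_F}(z)$ for every $z\in S^1$, where $\Lambda_F$ is the symplectic path generated by $-Jd^2F(\bar\gamma(t))/T$. Using the invariance of $F$ under both $\psi^H$ and $\phi_t$, together with the zero-Maslov orbit hypothesis, one identifies $\Lambda_F$ (up to a Maslov-zero correction) with the reverse of $\Phi$ reparametrized, so that the hypothesis $\Bott_\Phi(-1)\leq -(n+1)$ translates into $\mu(\Lambda)\geq n+1$.

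With $\mu(\Lambda)\geq n+1$ established, Proposition~\ref{prop:sdc_2nd_iterate} applied to $\Lambda$ gives
\[
\mu(\Lambda^2)+2S^+_1(\Lambda(T/2)^2)-\nu(\Lambda(T/2)^2)\geq n+1,
\]
which together with Lemma~\ref{lemma:linear} and the identifications $\mu(\Lambda^2)=\mu(\gamma)$ and $b_\pm(\Lambda(T/2)^2)=b_\pm(\gamma(T))$ yields the desired inequality $\mu(\gamma)+b_-(\gamma(T))-b_+(\gamma(T))\geq n+1$. The main obstacle is the identification of $\mu(\Lambda_F)$ in terms of $\Bott_\Phi(-1)$: the generator of $\Lambda_F$ is evaluated along the moving base $\bar\gamma(t)$, whereas $\Phi$ is defined at the fixed base $x$. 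For a linear circle action (where $d^2F$ is constant) the two paths coincide up to reparametrization and the identification follows from the iterate formula $\mu(\Phi^2)=\Bott_\Phi(1)+\Bott_\Phi(-1)$; for nonlinear actions one must carefully exploit the transport of $d^2F$ under the commuting flows $\psi^H_t$ and $\phi_s$ to achieve the same conclusion.
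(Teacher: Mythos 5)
The overall plan (halve the orbit, compare, apply a "double-iterate" index inequality) is the right one, but the implementation has two genuine gaps, one of which cannot be filled.

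First, the assertion that $\phi$-invariance of $H$ gives $\{F,H\}=0$ is false. The hypothesis is only that $H\circ\phi_{1/2}=H$, i.e., invariance under the \emph{discrete} $\Z_2$-subgroup, which does not imply that the full circle action preserves $H$. Without $\{F,H\}=0$ the flows $\vr^H_t$ and $\phi_s$ do not commute, the curve $\bga(t)=\phi_{-t/T}(\ga(t))$ is not a $K$-orbit, and the whole construction of $\Lambda$ collapses. In the actual application (the example produced in the proof of Theorem \ref{thm:example}) $H$ is definitely not $S^1$-invariant, so this is not a cosmetic issue. The paper circumvents this by working at the linearized level only: it sets $\Psi(t)=\tPhi(t,0)^{-1}\tGa(t,0)$ with $\tPhi(t,0)=d\phi_t(\ga(0))$ always evaluated at the fixed base point, and derives the "closing'' identity $\Psi(t+1/2)=\Psi(t)\Psi(1/2)$ purely from the commutation of $\vr^H_t$ with $\phi_{1/2}$, which is all that $\Z_2$-invariance gives (see the proof of Proposition \ref{prop:closing}).

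Second, even granting the construction, the intermediate target is wrong. You aim at $\cz(\Lambda)\geq n+1$ so as to invoke Proposition \ref{prop:sdc_2nd_iterate}; but the hypothesis $\Bott_\Phi(-1)\leq -(n+1)$ controls the Bott function at $-1$, not at $1$, so it gives no lower bound on $\cz(\Lambda)=\Bott_\Lambda(1)$. Concretely, in the model circle action \eqref{eq:Phi linear} and its perturbation in Lemma \ref{lemma:conjugation} one computes $\cz(\bPsi_\ep)=0$, so the comparison can only give $\cz(\bPsi)\geq 0$, not $\geq n+1$. The paper establishes instead the \emph{jump} $\cz(\bPsi^2)-\cz(\bPsi)=\Bott_{\bPsi}(-1)\geq n+1$ (Proposition \ref{prop:comparison}), and this requires the new Proposition \ref{prop:sdc_jump} rather than Proposition \ref{prop:sdc_2nd_iterate} — these two propositions have genuinely different hypotheses, and the latter is not applicable here. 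Related to this, your comparison compares $\Lambda$ with a path $\Lambda_F$ generated by $-d^2F$ along the moving orbit, and you acknowledge you cannot identify $\Lambda_F$ with $\bPhi^{-1}$; the paper's Proposition \ref{prop:comparison} instead compares $C(t)=B(t)+\Phi(t)^*A(t)\Phi(t)$ with $C_\ep(t)=B(t)+\Phi(t)^*D_\ep(t)\Phi(t)$ for a small $D_\ep$ coming from a perturbation $\bPsi_\ep$ of $\bPhi^{-1}$, and then uses $A(t)>0$ (convexity of $H$) to obtain $C-C_\ep>0$, so the comparison never needs the moving-base identification you are missing.
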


Postponing its proof to Section \ref{sec:c=>sdc2}, let us explain why the previous theorem generalizes Theorem \ref{thm:c=>sdc1} when $n$ is odd. Under this assumption, it turns out that the $\Z_2$-action on $\R^{2n+2}$ generated by the antipodal map can be induced by a Hamiltonian $S^1$-action $\phi_t: \R^{2n+2} \to \R^{2n+2}$ with zero Maslov index. (This is related to the fact that the induced contact structure on $\RP^{2n+1}$ has vanishing first Chern class if and only if $n$ is odd.) As a matter of fact, let $\theta_i=1$ for every $i \in \{1,\dots,(n+1)/2\}$ and $\theta_i=-1$ for every $i \in \{(n+1)/2+1,\dots,(n+1)\}$ (note that, by our assumptions, $(n+1)/2$ is a positive integer). Consider the Hamiltonian $S^1$-action on $\R^{2n+2} \cong \C^{n+1}$ given by
\begin{equation}
\label{eq:Phi linear}
\phi_t(z_1,\dots,z_{n+1})=(e^{2\pi\sqrt{-1}\theta_1 t}z_1,\dots,e^{2\pi\sqrt{-1}\theta_{n+1} t}z_{n+1}),
\end{equation}
where $t \in S^1=\R/\Z$. Clearly, $\phi_{1/2}$ is the antipodal map. Moreover, since $\sum_i \theta_i = 0$, the Maslov index of every orbit of $\phi_t$ vanishes.

This circle action has the following property: given \emph{any} $x \in \R^{2n+2}$ the linearized Hamiltonian flow $t \mapsto \Phi(t):=d\phi_t(x)=\phi_t$ along the half orbit $\phi|_{[0,1/2]}(x)$ satisfies
\begin{equation}
\label{eq:Bott phi at -1}
\Bott_\Phi(-1) = -(n+1).
\end{equation}
Indeed, a straightforward computation shows that $\cz(\Phi)=0$ and $\cz(\Phi^2)=-(n+1)$. Hence, we conclude Theorem \ref{thm:c=>sdc1} from Theorem \ref{thm:c=>sdc2} and Proposition \ref{prop:ReebHam} when $n$ is odd.

In order to show that, when $n$ is odd, the example furnished by Theorem \ref{thm:example} has the properties stated in Theorem \ref{thm:example open} we proceed as follows. Let $\ga$ be the aforementioned symmetric orbit of $\alpha$ such that $\cz(\ga) + b_-(\ga(1)) - b_+(\ga(1)) < n+2$ and fix $x=\ga(0)$. We need the following lemma whose proof is given in Section \ref{sec:proof conjugation}. Note that every contactomorphism of $S^{2n+1}$ lifts to a symplectomorphism of its symplectization $SS^{2n+1} \simeq \R^{2n+2}\setminus\{0\}$.

\begin{lemma}
\label{lemma:conjugation}
There exists a contactomorphism $\vr: S^{2n+1} \to S^{2n+1}$ arbitrarily $C^1$-close to the identity such that the corresponding lifted symplectomorphism $\psi: \R^{2n+2}\setminus\{0\}\to \R^{2n+2}\setminus\{0\}$ has the following property. The conjugated action $\phi'_t:=\psi^{-1}\phi_t\psi$ satisfies
\[
\Bott^+_{\Phi'}(-1) \leq -(n+1),
\]
where $\Phi'(t):=d\phi'_t(\psi^{-1}(x))$ is the linearized circle action along the half orbit $\phi'|_{[0,1/2]}(\psi^{-1}(x))$ and $\Bott^+$ is the upper semicontinuous Bott's function defined in \eqref{eq:Bott+}.
\end{lemma}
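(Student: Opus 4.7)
The plan is to construct $\vr$ directly with prescribed $1$-jet data at the endpoints of the half orbit, so that after conjugation the linearized path $\Phi'$ has endpoint close to $-I$ but with no eigenvalue equal to $-1$, and has Bott function at $-1$ bounded above by $-(n+1)$; the non-degeneracy at $-1$ will ensure $\Bott^+_{\Phi'}(-1)=\Bott_{\Phi'}(-1)$.

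First I would take $\vr$ to be supported in a small neighborhood of $\pi(x)\in S^{2n+1}$ and equal to the identity near $\pi(-x)$, so that the lift $\psi$ is the identity near the ray through $-x$ and $B:=d\psi^{-1}(-x)=I$. I would then prescribe the $1$-jet of $\vr$ at $\pi(x)$ so that $A:=d\psi(\psi^{-1}(x))$ equals the block-diagonal unitary rotation $R_{\beta_1}\oplus\cdots\oplus R_{\beta_{n+1}}$ in the $(n+1)$ complex coordinates of $\R^{2n+2}$, with each $\beta_j<0$ small. The $\R_{>0}$-equivariance of the lift is consistent with this data since it forces $\psi^{-1}(x)=A^{-1}x$, close to $x$ when $|\beta_j|$ is small, and such a $\vr$ can be chosen arbitrarily $C^1$-close to the identity as $|\beta_j|\to 0$.

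The chain rule applied to $\phi'_{1/2}=\psi^{-1}\circ\phi_{1/2}\circ\psi$ at $\psi^{-1}(x)$ yields $\Phi'(1/2)=B\cdot d\phi_{1/2}(x)\cdot A=-A$, whose eigenvalues $\{e^{\pm i(\pi+\beta_j)}\}_{j=1}^{n+1}$ are all distinct from $-1$ since $\beta_j\neq 0$. Hence $\Bott_{\Phi'}$ is continuous at $-1$, and it suffices to show $\Bott_{\Phi'}(-1)\leq -(n+1)$. Since $\psi$ is the identity outside a thin cone about the ray through $x$, the trajectory $\phi'_t(\psi^{-1}(x))$ coincides with $\phi_t(x)$ for $t\in[T_0,1/2]$ for some small $T_0>0$, and on this interval $\Phi'(t)=\phi_t\cdot A$ is block-diagonal; the brief transition segment on $[0,T_0]$ can be homotoped, rel.\ endpoints, to a block-diagonal path without altering the Bott function.

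A block-by-block computation using the rotation formula for the Conley--Zehnder index then shows that each $+$ block ($\theta_j=+1$) with $\beta_j<0$ contributes $0$ to $\Bott_{\Phi'}(-1)$, while each $-$ block ($\theta_j=-1$) with $\beta_j<0$ contributes $-2$, so summing over the $(n+1)/2$ blocks of each sign gives $\Bott_{\Phi'}(-1)=-(n+1)$, as required. I expect the most delicate step to be the verification that the initial transition segment of $\Phi'$ on $[0,T_0]$ does not introduce unintended winding in any block; I would handle this either by the block-diagonal homotopy just described (taking $T_0$ small enough and tracking winding carefully), or alternatively by invoking Theorem~\ref{thm:comparison} with the inequality $\nabla^2(H\circ\psi)\leq 2\pi\Theta$ along the trajectory, which can be arranged by a first-order expansion in the small parameters $|\beta_j|$ using the freedom in the choice of $\psi$.
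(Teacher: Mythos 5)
Your proposal takes essentially the same route as the paper: support the perturbation $\psi$ near one endpoint of the half-orbit (you localize at $x$; the paper at $-x$, a cosmetic difference), arrange the linearization there to be a small unitary rotation $A$ so that the conjugated endpoint $\Phi'(1/2)=-A$ misses the eigenvalue $-1$, and then evaluate $\Bott_{\Phi'}(-1)=-(n+1)$ against an explicit block-diagonal rotation model. The step you flag as delicate — the transition segment — the paper disposes of directly: $\Phi'$ and the model path $\bPsi_\ep$ both start at the identity, are $C^0$-close to $\Phi(t)=d\phi_t(x)$, and share the endpoint $-A$, so they are homotopic with fixed endpoints and their Bott functions coincide by homotopy invariance; this is cleaner than homotoping the transition segment block-by-block or invoking Theorem~\ref{thm:comparison}.
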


It follows from the upper semicontinuity of the map $\Ga \mapsto \Bott^+_\Ga(-1)$ in the $C^0$-topology (see Section \ref{sec:Bott}) that the properties of the previous lemma hold for every contactomorphism $\vr': S^{2n+1} \to S^{2n+1}$ $C^1$-close to $\vr$. Therefore, we get a $C^1$-open subset $V \subset \Cont(S^{2n+1})$ such that every $\vr' \in V$ satisfies the properties of the lemma. Moreover, the closure of $V$ in the $C^1$-topology contains the identity.

By the lower semicontinuity of $\Bott_\Ga(-1)$ with respect to $\Ga$ in the $C^0$-topology, we conclude that
\[
\Bott_{\Phi'}(-1) \leq -(n+1)
\]
for any $\vr' \in \widebar V$. Thus, by Theorem \ref{thm:c=>sdc2}, $(\vr')^*\alpha$ cannot be convex for any $\vr' \in \widebar V$.

Now, let $\bvr \in S$ and $\balpha=\bvr^*\alpha$. The form
$\balpha$ is antipodally symmetric and has a symmetric closed orbit $\bga = \bvr^{-1}\ga$ such that 
$$
\cz(\bga) + b_-(\bga(1)) - b_+(\bga(1)) < n+2.
$$
Arguing as above, we get a $C^1$-open subset $V_\bvr \subset \Cont(S^{2n+1})$ such that the closure of $V_\bvr$ (in the $C^1$-topology) contains the identity and $(\vr')^*\balpha$ cannot be convex for any $\vr'$ in $V_\bvr$. Note that $V_{\id}=V$.

In this way, we obtain a $C^1$-open subset $U \subset \Cont(S^{2n+1})$ given by
\[
U = \bigcup_{\bvr \in S}\bigcup_{\vr' \in V_\bvr} \{\bvr\vr'\}
\]
such that $(\bvr')^*\alpha$ cannot be convex for any $\bvr' \in U$. Clearly, $S$ belongs to the $C^1$-closure of $V$. Moreover, again by the lower semicontinuity of $\Bott_\Ga(-1)$ with respect to $\Ga$ in the $C^0$-topology, we conclude that $(\bvr')^*\alpha$ cannot be convex for any $\bvr' \in \widebar U$.

\subsection{Proof of Theorem \ref{thm:c=>sdc2}}
\label{sec:c=>sdc2}

\subsubsection{Idea of the proof}
Let $\Ga$ be the linearized Hamiltonian flow of $H$ along $\ga$. In the proof of Theorem \ref{thm:c=>sdc1}, we used in a crucial way the fact that if $H$ is invariant under the antipodal map then $\Ga$ is the second iterate of $\Ga|_{[0,T/2]}$. Unfortunately, it is not true in general when $\phi$ is not the antipodal map. However, under our hypotheses, we can show that $\Ga$ is \emph{homotopic} to a path $\Psi$ such that $\Psi$ is the second iterate of $\Psi|_{[0,T/2]}$ (see Proposition \ref{prop:closing}). The path $\bPsi:=\Psi|_{[0,T/2]}$ is no longer positive in general, but our assumption that $\Bott_\Phi(-1) \leq -(n+1)$ and the convexity of $H$ imply that its index has the following crucial jump:
\[
\cz(\bPsi^2) - \cz(\bPsi) \geq n+1,
\]
see Proposition \ref{prop:comparison}. This condition allows us to show that $\cz(\bPsi^2) + b_-(\bPsi^2(T)) - b_+(\bPsi^2(T)) \geq n+1$ (see Proposition \ref{prop:sdc_jump}). Since $\Psi=\bPsi^2$ is homotopic (with fixed endpoints) to $\Ga$, we conclude Theorem \ref{thm:c=>sdc2}.

\subsubsection{Proof of the theorem}
As discussed above, let $\Ga$ be the linearized Hamiltonian flow of $H$ along $\ga$. Assume, without loss of generality, that $T=1$ and $x=\ga(0)$. Firstly, we need the following proposition. It uses only the fact that the orbits of $\phi$ have zero Maslov index.

\begin{proposition}
\label{prop:closing}
We have that $\Ga$ is homotopic with fixed endpoints to a path $\Psi: [0,1] \to \Sp(2n+2)$ such that $\Psi=\bPsi^2$, where $\bPsi:=\Psi|_{[0,1/2]}$.
\end{proposition}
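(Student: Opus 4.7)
\emph{Plan of proof.} The plan is to exploit the symmetry of $\ga$ together with the commutation $\vr^H_t\circ\phi=\phi\circ\vr^H_t$ (forced by $\phi$-invariance of $H$) to identify $\Ga(1)$ as an explicit square, then construct $\bPsi$ by twisting $\bGa:=\Ga|_{[0,1/2]}$ with the linearized circle action $\Phi(t):=d\phi_t(\ga(0))$, and finally verify the two paths are homotopic rel endpoints by showing that the discrepancy loop is precisely the full circle-action loop, whose Maslov class is zero by hypothesis.

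First I would deduce from $\phi^2=\id$ and the symmetry $\phi(\ga(\R))=\ga(\R)$ that (after a shift in time) $\ga(t+1/2)=\phi(\ga(t))$. Differentiating the identity $\vr^H_t\circ\phi=\phi\circ\vr^H_t$ at $\ga(0)$ and using the constant trivialization of $T\R^{2n+2}$ yields, with $A(t):=d\phi(\ga(t))$,
\[
\Ga(t+1/2)=A(t)\,\bGa(t)\,A(0)^{-1}\,\bGa(1/2),\qquad t\in[0,1/2].
\]
Since $\phi^2=\id$ forces $A(1/2)=A(0)^{-1}$, setting $t=1/2$ gives $\Ga(1)=M^2$ where $M:=A(0)^{-1}\bGa(1/2)$. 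This singles out the correct square root.

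Next, define $\bPsi(t):=\Phi(t)^{-1}\bGa(t)$ on $[0,1/2]$. Then $\bPsi(0)=\Id$ and $\bPsi(1/2)=\Phi(1/2)^{-1}\bGa(1/2)=A(0)^{-1}\bGa(1/2)=M$, so the iterate $\Psi:=\bPsi^2$ runs from $\Id$ to $M^2=\Ga(1)$. To establish homotopy rel endpoints to $\Ga$, I would form the loop $L(t):=\Ga(t)\Psi(t)^{-1}$ based at $\Id$; a direct substitution using the displayed formula gives
\[
L(t)=\begin{cases}\Phi(t), & t\in[0,1/2],\\ A(t-1/2)\,\Phi(t-1/2), & t\in[1/2,1],\end{cases}
\]
and it suffices to prove that $L$ is null-homotopic in $\Sp(2n+2)$.

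Here is where the zero-Maslov-index assumption enters, and this is the main obstacle: one must connect $A$, which involves the Hamiltonian orbit, to the purely circle-action data. The two paths $u\mapsto\ga(u)$ and $u\mapsto\phi_u(\ga(0))$ both run from $\ga(0)$ to $\phi_{1/2}(\ga(0))$ and lie entirely in $\R^{2n+2}\setminus\{0\}$, which is simply connected for $n\geq 1$, so they are homotopic rel endpoints there. Applying the smooth map $x\mapsto d\phi_{1/2}(x)$ transports this to a homotopy rel endpoints between $A(u)$ and $d\phi_{1/2}(\phi_u(\ga(0)))=\Phi(1/2+u)\Phi(u)^{-1}$, the last equality coming from differentiating $\phi_{1/2+u}=\phi_{1/2}\circ\phi_u$ at $\ga(0)$. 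Substituting into the second branch of $L$ collapses it to $\Phi(1/2+u)$, so $L$ is homotopic as a based loop to the full circle-action loop $\Phi\colon[0,1]\to\Sp(2n+2)$. By hypothesis this loop has vanishing Maslov index and is therefore contractible, so $L$ is null-homotopic and $\Ga\simeq\Psi=\bPsi^2$ rel endpoints, as required.
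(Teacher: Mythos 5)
Your proof is correct, and it actually takes a more robust route than the paper's. Both arguments hinge on the same construction: twist the linearized Hamiltonian flow by the inverse of the linearized circle action, setting $\bPsi(t)=\Phi(t)^{-1}\bGa(t)$ on $[0,1/2]$. The difference is how one establishes the homotopy $\Ga\simeq\bPsi^2$. The paper defines $\Psi(t)=\tPhi(t,0)^{-1}\tGa(t,0)$ on all of $[0,1]$ and tries to show the pointwise identity $\Psi(t+1/2)=\Psi(t)\Psi(1/2)$ from cocycle algebra, so that $\Psi=\bPsi^2$ exactly and $\Ga\Psi^{-1}=\Phi$ without any further homotopy step. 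But the cocycle the paper invokes, $\tPhi(t+t',s)=\tPhi(t,s+t')\,\tPhi(t',s)$, expands by the chain rule to $d\phi_{t+t'}(\ga(s))=d\phi_t(\phi_{t'}(\ga(s)))\,d\phi_{t'}(\ga(s))$ and thus requires $\phi_{t'}(\ga(s))=\ga(s+t')$; this holds only when the $H$-orbit is itself a $\phi$-orbit, or when $\phi_t$ is linear so that $d\phi_t$ is independent of the base point (the case relevant to Theorem \ref{thm:c=>sdc1} via \eqref{eq:Phi linear}), but not for the general nonlinear circle action of Theorem \ref{thm:c=>sdc2}. Your approach sidesteps this: you set $\Psi:=\bPsi^2$ by decree, match the endpoint $\Ga(1)=M^2$ using $A(1/2)=A(0)^{-1}$, and then reduce the discrepancy loop $L=\Ga\Psi^{-1}$ to the full circle-action loop $\Phi$ by homotoping $A(u)=d\phi_{1/2}(\ga(u))$ to $d\phi_{1/2}(\phi_u(\ga(0)))=\Phi(u+1/2)\Phi(u)^{-1}$, which you do by pushing forward a path-homotopy in the simply connected $\R^{2n+2}\setminus\{0\}$ through the continuous map $x\mapsto d\phi_{1/2}(x)$. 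This topological step is exactly the ingredient that replaces the unjustified algebraic identity; the conclusion then follows from the vanishing Maslov index of $\Phi$. Since the downstream argument (Proposition \ref{prop:comparison}) only uses $\bPsi=\Phi^{-1}\bGa$ on $[0,1/2]$, which your construction and the paper's agree on, your version slots in with no further changes.
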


\begin{proof}
Let $\vr^H_t$ be the Hamiltonian flow of $H$. Define $\tGa: \R^2 \to \Sp(2n+2)$, $\tPhi: \R^2 \to \Sp(2n+2)$ and $\Psi: \R \to \Sp(2n+2)$ as
\begin{equation}
\label{eq:tGa}
\tGa(t,s)=d\vr^H_t(\ga(s)),
\end{equation}
\begin{equation}
\label{eq:tPhi}
\tPhi(t,s)=d\phi_t(\ga(s))
\end{equation}
and
\begin{equation}
\label{eq:Psi}
\Psi(t)=\tPhi(t,0)^{-1} \circ \tGa(t,0).
\end{equation}
We have that $\Ga(t)=\tGa(t,0)$ and the symplectic loop $t \mapsto \tPhi(t,0)^{-1}$ has zero Maslov index. Therefore, $\Psi$ is homotopic to $\Ga$.

Using the relations $\vr^H_{t+t'}=\vr^H_t \circ \vr^H_{t'}$ and $\rho_{t+t'}=\rho_t \circ \rho_{t'}$ we arrive at
\begin{equation}
\label{eq:condition tGa and tPhi}
\tGa(t+t',s) = \tGa(t,s+t')\tGa(t',s)\quad\text{and}\quad  \tPhi(t+t',s) = \tPhi(t,s+t')\tPhi(t',s)
\end{equation}
for every $t$ and $t'$. Differentiating the relation $\vr^H_t \circ \phi_{1/2}(\ga(0)) = \phi_{1/2} \circ \vr^H_t(\ga(0))$ and using the fact that $\phi_{1/2}(\ga(0))=\ga(1/2)$ we conclude that $\tGa(t,1/2)\tPhi(1/2,0) = \tPhi(1/2,t)\tGa(t,0)$ and consequently
\begin{equation}
\label{eq:symmetry}
\tGa(t,0)\tPhi(1/2,0)^{-1} = \tPhi(1/2,t)^{-1}\tGa(t,1/2)
\end{equation}
for every $t \in \R$. Thus, we have that
\begin{align*}
\Psi(t)\Psi(1/2) & = \tPhi(t,0)^{-1}\tGa(t,0)\tPhi(1/2,0)^{-1}\tGa(1/2,0) \\
& = \tPhi(t,0)^{-1}\tPhi(1/2,t)^{-1}\tGa(t,1/2)\tGa(1/2,0) \\
& = (\tPhi(1/2,t)\tPhi(t,0))^{-1}\tGa(t,1/2)\tGa(1/2,0) \\
& = \tPhi(t+1/2,0)^{-1}\tGa(t+1/2,0) \\
& = \Psi(t+1/2)
\end{align*}
for every $t$, where the second and fourth equalities follow from \eqref{eq:symmetry} and \eqref{eq:condition tGa and tPhi} respectively. Hence, $\Psi$ is our desired path and we conclude the result.
\end{proof}

The path $\Psi$ is not positive in general. However, its index satisfies the following crucial jump condition.

\begin{proposition}
\label{prop:comparison}
The path $\Psi$ established in Proposition \ref{prop:closing} can be chosen such that
\[
\cz(\bPsi^2) - \cz(\bPsi) \geq n+1,
\]
where $\bPsi:=\Psi|_{[0,1/2]}$.
\end{proposition}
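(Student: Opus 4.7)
The goal is to show that $\cz(\bar\Psi^2)-\cz(\bar\Psi)\geq n+1$, which by Bott's formula (property (a) of Section \ref{sec:Bott}) is equivalent to $\Bott_{\bar\Psi}(-1)\geq n+1$.

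First, I identify $\bar\Psi$ dynamically. The $\phi$-invariance of $H$ gives $\{H,K\}=0$, where $K$ is the Hamiltonian generating $\phi_t$, so the flows $\phi_t$ and $\vr^H_t$ commute and $\psi_t:=\phi_{-t}\vr^H_t$ is the Hamiltonian flow of $H-K$. Combining this with the defining formula $\Psi(t)=\tilde\Phi(t,0)^{-1}\tilde\Gamma(t,0)=d\phi_t(x)^{-1}d\vr^H_t(x)$ from Proposition \ref{prop:closing}, one gets $\bar\Psi(t)=d\psi_t(x)$ for $t\in[0,1/2]$. The symmetry of $\gamma$ forces $\psi_{1/2}(x)=x$, so $t\mapsto\psi_t(x)$ is a closed orbit of $H-K$ of period $1/2$ (doubled by $\Psi=\bar\Psi^2$). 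Consequently, $\dot{\bar\Psi}=JA_{\bar\Psi}\bar\Psi$ with $A_{\bar\Psi}(t)=D^2H(\psi_t(x))-D^2K(\psi_t(x))$.

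Second, I reduce to a computation in a simultaneously diagonal basis. Since $H$ is a convex homogeneous quadratic form and $\phi$ is a Hamiltonian $S^1$-action preserving it, $\phi$ is conjugate to a one-parameter subgroup of the maximal compact $U(n+1)\subset\Sp(2n+2,\R)$ stabilizing $H$. In such a symplectic basis,
\[
H(z)=\tfrac12\sum_k\lambda_k|z_k|^2,\qquad K(z)=\pi\sum_k\theta_k|z_k|^2,
\]
with $\lambda_k>0$, $\theta_k\in\Z$, and $\sum_k\theta_k=0$ (Maslov-zero hypothesis). Choosing coordinates so $\gamma$ lies along the first axis forces $\lambda_1=2\pi$ and $\theta_1$ odd (for the $\phi_{1/2}$-symmetry of $\gamma$). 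A direct computation gives $\psi_t(x)=x$ for all $t$ in this basis, so $A_{\bar\Psi}$ is constant and
\[
\bar\Psi(t)=\bigoplus_k\exp\bigl(tJ(\lambda_k-2\pi\theta_k)I_2\bigr)
\]
decomposes into planar rotations.

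Third, I compute $\Bott_{\bar\Psi}(-1)$ block-by-block and match against the hypothesis. For each block, the Conley--Zehnder indices of $\bar\Psi_k$ on $[0,1/2]$ and of $\bar\Psi_k^2$ on $[0,1]$ are read off from the rotation angle $(\lambda_k-2\pi\theta_k)/2$ and its double. Using $\lambda_k>0$ (convexity), one checks that any block with $\theta_k\leq-1$ contributes $\Bott_{\bar\Psi_k}(-1)\geq 2$, while blocks with $\theta_k\geq 0$ contribute $\geq 0$. The hypothesis $\Bott_\Phi(-1)\leq-(n+1)$, computed analogously for $\Phi_k=\exp(2\pi\theta_k tJI_2)$, forces enough negative-$\theta_k$ blocks so that $\sum_k\Bott_{\bar\Psi_k}(-1)\geq n+1$. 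Applying Bott's formula yields the claim. Theorem \ref{thm:comparison} is not strictly necessary in this quadratic setting, but it can be invoked to handle the inequality $A_{\bar\Psi}\geq-D^2K$ uniformly, ensuring the block-diagonal reduction is index-theoretically clean when $\phi$ is only assumed linearizable.

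The main obstacle is the block-by-block case analysis, in particular matching each block's contribution to both $\Bott_\Phi(-1)$ and $\Bott_{\bar\Psi}(-1)$ to extract the sharp bound. In the extremal balanced case $\theta_k\in\{\pm 1\}$ with the $\lambda_k$'s subresonant (i.e., $\lambda_k<2\pi$ for $k\geq 2$), each of the $(n+1)/2$ blocks with $\theta_k=-1$ contributes exactly $+2$, yielding $\Bott_{\bar\Psi}(-1)=n+1$ with equality; general $\theta_k$ with $|\theta_k|\geq 2$ require tracking how extra lower-semicontinuous jumps in $\Bott_\Phi$ are compensated by equivalent gains in $\Bott_{\bar\Psi}$ coming from convexity of $H$.
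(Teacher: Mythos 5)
Your proposal contains genuine gaps that would need to be closed before this could count as a proof of the proposition. The main problems:

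\textbf{The identification $\bar\Psi(t)=d\psi_t(x)$ fails for nonlinear $\phi$.} You set $\psi_t=\phi_{-t}\circ\vr^H_t$. Then $d\psi_t(x)=d\phi_{-t}(\vr^H_t(x))\circ d\vr^H_t(x)=d\phi_t(\psi_t(x))^{-1}\circ d\vr^H_t(x)$, whereas $\Psi(t)=d\phi_t(x)^{-1}\circ d\vr^H_t(x)$. These coincide only if $d\phi_t(\psi_t(x))=d\phi_t(x)$, i.e., when $d\phi_t$ is base-point independent --- that is, when $\phi_t$ is \emph{linear}. Theorem~\ref{thm:c=>sdc2} only assumes a Hamiltonian circle action with zero Maslov index, and in its application in the proof of Theorem~\ref{thm:example open} the action is the \emph{conjugated} one $\phi'_t=\psi^{-1}\phi_t\psi$, which is precisely not linear. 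So this step breaks in the case that matters.

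\textbf{Reducing to a quadratic $H$ is unjustified and trivializes the problem.} A convex homogeneous degree-two function on $\R^{2n+2}\setminus\{0\}$ is not a quadratic form; such functions correspond to arbitrary convex star-shaped domains, of which ellipsoids are a tiny subfamily. Once you replace $H$ by $\frac12\sum_k\lambda_k|z_k|^2$, there is essentially nothing left to prove. The whole difficulty of the proposition is handling the general convex $H$, for which $d^2H$ along $\gamma$ is a time-dependent positive-definite family with no block structure adapted to $\phi$.

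\textbf{Theorem~\ref{thm:comparison} is the engine, not an optional remark.} The paper's argument goes through precisely the spectral-flow comparison. From $\Bott_{\bar\Phi}(-1)\leq-(n+1)$ and identity \eqref{eq:Bott x Bott+} one gets $\Bott^+_{\bar\Phi^{-1}}(-1)\geq n+1$, hence a path $\bar\Psi_\ep$ $C^1$-close to $\bar\Phi^{-1}$ with $\Bott_{\bar\Psi_\ep}(-1)\geq n+1$. The key lemma identifies the generating symmetric matrices $C(t)=B(t)+\Phi(t)^*A(t)\Phi(t)$ for $\bar\Psi$ and $C_\ep(t)=B(t)+\Phi(t)^*D_\ep(t)\Phi(t)$ for $\bar\Psi_\ep$; convexity gives $A(t)>0$, while $D_\ep\to0$ as $\ep\to0$, so one can force $C(t)-C_\ep(t)>0$. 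Theorem~\ref{thm:comparison} then yields $\Bott_{\bar\Psi}(-1)\geq\Bott_{\bar\Psi_\ep}(-1)\geq n+1$. Without this comparison step, your block-by-block case analysis has no way to see the convexity of a \emph{non}-quadratic $H$. Your own final paragraph acknowledges that the nonquadratic case requires an inequality handled ``uniformly'' --- but that unproved inequality is precisely the substance of the proposition.

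In short: the first paragraph of your proposal is the correct reduction (show $\Bott_{\bar\Psi}(-1)\geq n+1$), but the rest of the argument substitutes two false simplifications (linear $\phi$, quadratic $H$) for the real content of the proof.
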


\begin{proof}
As explained in the proof of Proposition \ref{prop:closing},
\[
\Psi(t)=\Phi(t)^{-1} \circ \Ga(t)
\]
with $\Phi(t):=\tPhi(t,0)$ given by \eqref{eq:tPhi}. Let $A$, $B$ and $C$ be the path of symmetric matrices uniquely defined by the equations
\begin{equation}
\label{eq:deGa}
\frac{d}{dt}\Ga(t) = JA(t)\Ga(t),
\end{equation}
\begin{equation}
\label{eq:detPhi}
\frac{d}{dt}(\Phi(t)^{-1}) = JB(t)\Phi(t)^{-1}
\end{equation}
and
\begin{equation}
\label{eq:dePsi}
\frac{d}{dt}\Psi(t)= JC(t)\Psi(t),
\end{equation}
respectively.
\begin{lemma}
Let $\Ga$ and $\Phi$ be symplectic paths starting at the identity and satisfying equations \eqref{eq:deGa} and \eqref{eq:detPhi} respectively. Let $\Psi$ be the symplectic path given by $\Psi(t)=\Phi(t)^{-1} \circ \Ga(t)$ and satisfying \eqref{eq:dePsi}. We have that $C(t)=B(t)+\Phi(t)^*A(t)\Phi(t)$ for all $t$, where $\Phi(t)^*$ denotes the transpose of $\Phi(t)$.
\end{lemma}

\begin{proof}
It follows from the equation
\begin{align*}
\frac{d}{dt}\Psi(t) & = \frac{d}{dt}(\Phi(t)^{-1})\Ga(t) + \Phi(t)^{-1}\frac{d}{dt}\Ga(t) \\
& = JB(t)\Phi(t)^{-1}\Ga(t) + \Phi(t)^{-1}JA(t)\Ga(t) \\
& = J(B(t)+\Phi(t)^*A(t)\Phi(t))\Psi(t),
\end{align*}
where the last equality follows from the definition of $\Psi$ and the fact that
\begin{align*}
\Phi(t)^{-1}JA(t)\Ga(t) & = J(-J\Phi(t)^{-1}JA(t)\Phi(t))\Psi(t) \\
& = J(\Phi(t)^*A(t)\Phi(t))\Psi(t),
\end{align*}
where we used that $\Phi(t)$ is symplectic and therefore $\Phi(t)^*=-J\Phi(t)^{-1}J$.
\end{proof}
By our assumptions, $\Bott_{\bPhi}(-1) \leq - (n+1)$, where $\bPhi:=\Phi|_{[0,1/2]}$. We have, from \eqref{eq:Bott x Bott+}, that $\Bott_\bPhi(-1)=-\Bott^+_{\bPhi^{-1}}(-1)$. Therefore,
\begin{equation}
\label{eq:hypothesis Bott}
\Bott^+_{\bPhi^{-1}}(-1) \geq n+1.
\end{equation}
This implies that, given $\ep>0$, there exists a symplectic path $\bPsi_\ep$ starting at the identity $C^1$ $\ep$-close to $\bPhi^{-1}$ such that
\begin{equation}
\label{eq:Bott Psi_ep}
\Bott_{\bPsi_\ep}(-1) \geq n+1.
\end{equation}
Let $\Psi_\ep=\bPsi_\ep^2$ and $C_\ep$ be the path of symmetric matrices such that
\[
\frac{d}{dt} \Psi_\ep(t) = JC_\ep(t)\Psi_\ep(t).
\]
Define $\Theta_\ep(t)=\Phi(t) \circ \Psi_\ep(t)$ and let $D_\ep$ be the path of symmetric matrices such that
\[
\frac{d}{dt} \Theta_\ep(t) = JD_\ep(t)\Theta_\ep(t).
\]
Since $\Psi_\ep(t)=\Phi(t)^{-1}\circ\Theta_\ep(t)$, we see from the previous lemma that
\[
C_\ep(t) = B(t)+\Phi(t)^*D_\ep(t)\Phi(t).
\]
The fact that $A(t)=d^2H(\ga(t))$ and $H$ is convex on $\ga$, allows us to choose $\ep$ such that $C(t) - C_\ep(t) > 0$ for every $t$. Indeed, take $\ep$ small enough such that $A(t) - D_\ep(t) > 0$ for all $t$ ($D_\ep(t)$ is arbitrarily small as $\ep\to 0$ because $\Psi_\ep$ is $C^1$ $\ep$-close to $\Phi^{-1}$) and note that
\begin{align*}
\lg (C(t) - C_\ep(t))v,v \rg & = \lg \Phi(t)^*(A(t)-D_\ep(t))\Phi(t) v,v \rg \\
& = \lg (A(t)-D_\ep(t))\Phi(t) v,\Phi(t)v \rg > 0
\end{align*}
for every $t$ and $v \neq 0$.

Then, it follows from the previous discussion, \eqref{eq:Bott Psi_ep} and Theorem \ref{thm:comparison} that
\[
\cz(\bPsi^2) - \cz(\bPsi) \geq \cz(\bPsi_\ep^2) - \cz(\bPsi_\ep) = \Bott_{\bPsi_\ep}(-1) \geq n+1.
\]
\end{proof}

Hence, Theorem \ref{thm:c=>sdc2} follows from Lemma \ref{lemma:linear}, Propositions \ref{prop:ReebHam}, \ref{prop:closing}, \ref{prop:comparison} and the following result.

\begin{proposition}
\label{prop:sdc_jump}
Let $\bPsi: [0,1/2] \to \Sp(2n+2)$ be a symplectic path starting at the identity. Suppose that
\[
\cz(\bPsi^2) - \cz(\bPsi) \geq n+1.
\]
Then
\[
\cz(\bPsi^2) + 2S^+_1(\bPsi^2(1)) - \nu(\bPsi^2(1)) \geq n+1.
\]
\end{proposition}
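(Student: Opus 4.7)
The plan is to mirror the proof of Proposition~\ref{prop:sdc_2nd_iterate}, with the roles of the points $z=1$ and $z=-1$ on the unit circle exchanged. Set $P := \bPsi(1/2)$ and let $\Bott$ denote the Bott function of $\bPsi$, so that by Bott's formula $\cz(\bPsi)=\Bott(1)$ and $\cz(\bPsi^2)=\Bott(1)+\Bott(-1)$. The hypothesis then becomes the clean inequality $\Bott(-1)\ge n+1$, which should play the same role that $\cz(\Ga)\ge n+1$ played in Proposition~\ref{prop:sdc_2nd_iterate}.

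Next, using \eqref{eq:bott2} and \eqref{eq:bott3} to split $S^+_1$ and $\nu_1$ at $P^2$ over the square roots of unity, I would write
\[
\cz(\bPsi^2)+2S^+_1(P^2)-\nu_1(P^2)=\Bott(1)+\Bott(-1)+2\bigl(S^+_1(P)+S^+_{-1}(P)\bigr)-\bigl(\nu_1(P)+\nu_{-1}(P)\bigr).
\]
To exchange $\Bott(1)$ for $\Bott(-1)$, I would apply \eqref{eq:bott via splitting} at $\theta=\pi$ together with $S^-_{-1}(P)=S^+_{-1}(P)$ from \eqref{eq:splitting1}, giving
\[
\Bott(1)=\Bott(-1)-S^+_1(P)-\sum_{\phi\in(0,\pi)}\!\bigl(S^+_{e^{\sqrt{-1}\phi}}(P)-S^-_{e^{\sqrt{-1}\phi}}(P)\bigr)+S^+_{-1}(P).
\]
Substituting this and regrouping (with every splitting number and nullity evaluated at $P$), the target quantity becomes
\[
2\Bott(-1)+2S^+_{-1}+\sum_{\phi\in(0,\pi)}\!\bigl(S^-_{e^{\sqrt{-1}\phi}}-S^+_{e^{\sqrt{-1}\phi}}\bigr)-\bigl(\nu_1-S^+_1\bigr)-\bigl(\nu_{-1}-S^+_{-1}\bigr).
\]

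Finally, discarding the non-negative terms $2S^+_{-1}$ and $\sum_\phi S^-_{e^{\sqrt{-1}\phi}}$ produces a lower bound of
\[
2\Bott(-1)-\Bigl(\sum_{\phi\in(0,\pi)} S^+_{e^{\sqrt{-1}\phi}}(P)+(\nu_1-S^+_1)+(\nu_{-1}-S^+_{-1})\Bigr).
\]
The parenthesized quantity is at most $n+1$: this is the $S^+$-analog of \eqref{eq:bound}, proved by exactly the same chain of inequalities, since \eqref{eq:bound splitting} gives $S^+_z\le\eta_z$ and \eqref{eq:bound nullity-splitting} gives $\nu_{\pm 1}-S^+_{\pm 1}\le\eta_{\pm 1}/2$, while $\sum_{\phi\in(0,\pi)}\eta_{e^{\sqrt{-1}\phi}}+\eta_1/2+\eta_{-1}/2\le n+1$ by the conjugate-symmetry of the spectrum of $P\in\Sp(2n+2)$. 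Combining with the hypothesis $\Bott(-1)\ge n+1$ yields a lower bound of $2(n+1)-(n+1)=n+1$, as required. The only conceptual step beyond routine bookkeeping is recognizing that the hypothesis is cleanly encoded as $\Bott(-1)\ge n+1$ and then using \eqref{eq:bott via splitting} to transfer the information across $\theta=\pi$; I do not anticipate any genuine obstacle beyond carefully tracking signs in the substitution.
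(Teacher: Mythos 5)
Your proof is correct and follows essentially the same route as the paper's: both start from Bott's formula, apply the relation \eqref{eq:bott via splitting} at $\theta=\pi$ together with $S^-_{-1}(P)=S^+_{-1}(P)$ to trade $\Bott(1)$ for $\Bott(-1)\geq n+1$, discard non-negative splitting numbers, and close with the dimension-counting bound from \eqref{eq:bound splitting} and \eqref{eq:bound nullity-splitting}. The only difference is cosmetic bookkeeping (you isolate $2\Bott(-1)$ minus a bounded remainder, as in the proof of Proposition \ref{prop:sdc_2nd_iterate}, while the paper substitutes the bound $\Bott(-1)\geq n+1$ in two separate steps), and both organizations arrive at the same final inequality.
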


\begin{proof}
Let $\Bott$ be the Bott's function associated to $\bPsi$ and $P=\bPsi(1/2)$. We have from Bott's formula, \eqref{eq:bott2}, \eqref{eq:bott3} and our assumptions that
\begin{align}
\label{eq:est1}
\begin{split}
\cz(\bPsi^2) + 2S^+_1(P^2) - \nu(P^2) & = \Bott(1) + \Bott(-1) + 2\sum_{z^2=1} S^+_z(P) - \sum_{z^2=1} \nu_z(P) \\
& \geq \Bott(1) + n+1 + 2\sum_{z^2=1} S^+_z(P) - \sum_{z^2=1} \nu_z(P).
\end{split}
\end{align}
In what follows, for the sake of simplicity, we will omit the dependence of $S^\pm_z$ and $\nu_z$ on $P$. From \eqref{eq:bott via splitting}, we get
\[
\Bott(-1) = \Bott(1) + S^+_1 + \sum_{\phi \in (0,\pi)}(S^+_{e^{\sqrt{-1}\phi}}-S^-_{e^{\sqrt{-1}\phi}}) - S^-_{-1},
\]
which implies that
\begin{align*}
\Bott(1) & = \Bott(-1) - S^+_1 - \sum_{\phi \in (0,\pi)}(S^+_{e^{\sqrt{-1}\phi}}-S^-_{e^{\sqrt{-1}\phi}}) + S^-_{-1}  \\
& \geq n+1 - S^+_1 - \sum_{\phi \in (0,\pi)}(S^+_{e^{\sqrt{-1}\phi}}-S^-_{e^{\sqrt{-1}\phi}}) + S^-_{-1}.
\end{align*}
Plugging this inequality in \eqref{eq:est1} we arrive at
\begin{align*}
\cz(\bPsi^2) + 2S^+_1(P^2) - \nu(P^2) & \geq  2n+2 - S^+_1 + 2\sum_{z^2=1} S^+_z - \sum_{z^2=1} \nu_z - \sum_{\phi \in (0,\pi)}(S^+_{e^{\sqrt{-1}\phi}}-S^-_{e^{\sqrt{-1}\phi}}) + S^-_{-1} \\
& \geq 2n+2 - \sum_{z^2=1} ( \nu_z - S^+_z) - \sum_{\phi \in (0,\pi)}(S^+_{e^{\sqrt{-1}\phi}}-S^-_{e^{\sqrt{-1}\phi}}),
\end{align*}
where the last inequality uses the fact that the splitting numbers are non-negative. Thus, it is enough to show that
\begin{equation}
\label{eq:desired bound}
\sum_{z^2=1} ( \nu_z - S^+_z) + \sum_{\phi \in (0,\pi)}(S^+_{e^{\sqrt{-1}\phi}}-S^-_{e^{\sqrt{-1}\phi}}) \leq n+1.
\end{equation}
In order to prove this inequality, define
\[
V = \bigoplus_{z \in \sigma(P) \cap S^1;\,z^2=1} E_z\quad\text{and}\quad W = \bigoplus_{z \in \sigma(P) \cap S^1;\,z^2\neq 1} E_z,
\]
where $\sigma(P)$ denotes the spectrum of $P$ and $E_z$ is the subspace whose complexification is the generalized eigenspace of $P$ associate to $z$. Clearly,
\begin{equation}
\label{eq:dimensions}
\dim V + \dim W \leq 2n+2.
\end{equation}
Note that, by \eqref{eq:bound nullity-splitting},
\begin{equation}
\label{eq:1st term}
\sum_{z^2=1} \nu_z - S^+_z \leq \frac{\dim V}{2}.
\end{equation}
On the other hand, by \eqref{eq:bound splitting},
\begin{equation}
\label{eq:2nd term}
\sum_{\phi \in (0,\pi)}(S^+_{e^{\sqrt{-1}\phi}}-S^-_{e^{\sqrt{-1}\phi}}) \leq \frac{\dim W}{2}.
\end{equation}
Therefore, inequality \eqref{eq:desired bound} follows from \eqref{eq:dimensions}, \eqref{eq:1st term} and \eqref{eq:2nd term}.
\end{proof}

\subsection{Proof of Lemma \ref{lemma:conjugation}}
\label{sec:proof conjugation}

Given $\ep>0$, let $\bPsi_\ep: [0,1/2] \to \Sp(2n+2)$ be the map given by
\[
\bPsi_\ep(t)(z_1,\dots,z_{n+1}) = (e^{2\pi\sqrt{-1}(\theta_1-\ep) t}z_1,\dots,e^{2\pi\sqrt{-1}(\theta_{n+1}-\ep) t}z_{n+1}).
\]
Then a straightforward computation shows that
\[
\cz(\bPsi_\ep) = 0
\]
and
\[
\cz(\bPsi_\ep^2) = -(n+1),
\]
whenever $\ep$ is sufficiently small. Thus, since for $\ep>0$ sufficiently small $\bPsi_\ep(1/2)$ does not have eigenvalue $-1$,
\[
\Bott^+_{\bPsi_\ep}(-1) = \Bott_{\bPsi_\ep}(-1) = -(n+1).
\]
Let $\vr: S^{2n+1} \to S^{2n+1}$ be a contactomorphism. As in the statement of the lemma, consider the corresponding lifted symplectomorphism $\psi: \R^{2n+2}\setminus\{0\}\to \R^{2n+2}\setminus\{0\}$, the conjugated action $\phi'_t:=\psi^{-1}\phi_t\psi$ and the symplectic path $\Phi'(t):=d\phi'_t(\psi^{-1}(x))$ given by the linearized circle action along the half orbit $\phi'|_{[0,1/2]}(\psi^{-1}(x))$ (recall that $x=\ga(0)$). We will show that for every $\ep>0$ sufficiently small there exists $\vr$ such that $\Phi'$ is homotopic with fixed endpoints to $\bPsi_\ep$ so that
\[
\Bott^+_{\Phi'}(-1) = -(n+1).
\]
Moreover, $\vr$ is $C^1$-arbitrarily close to the identity as $\ep\to 0$.

In order to construct $\vr$, consider the Hamiltonian $F: \R^{2n+2} \to \R$ given by
\[
F(z_1,\dots,z_{n+1}) = \pi\sum_{i=1}^{n+1} \|z_i\|^2.
\]
Let $\beta: S^{2n+1} \to [0,1]$ be a smooth bump function such that $\beta(y)=1$ for every $y$ in a neighborhood $U$ of $-x/\|x\|$ and $\beta(y)=0$ for all $y \notin V$, where $V$ is a neighborhood of $-x/\|x\|$ such that $U \subset V$ and $x/\|x\| \notin V$. Let $\lambda: \R^{2n+2}\setminus\{0\} \to \R$ be the smooth function given by $\lambda(cy)=c^2\beta(y)$ for every $y \in S^{2n+1}$ and $c \in (0,\infty)$. 

Let $G: \R^{2n+2}\setminus\{0\} \to \R$ be the Hamiltonian given by $\lambda F$. Let $\epsilon>0$ and set $\psi=\vr^G_\ep$, where $\vr^G_t$ denotes the Hamiltonian flow of $G$. By construction, $G$ equals $F$ on a neighborhood of $-x$ and vanishes on $x$. Therefore, taking $\epsilon$ sufficiently small, we have that $\psi:=\vr^F_\ep$ satisfies
\[
\psi(z_1,\dots,z_{n+1})=(e^{2\pi\sqrt{-1}\ep}z_1,\dots,e^{2\pi\sqrt{-1}\ep}z_{n+1})
\]
for every $(z_1,\dots,z_{n+1})$ close to $-x$ and $\psi$ is identity on a neighborhood of $x$. In particular, $d(\psi^{-1})(-x) = R_\ep$, where
\[
R_\ep(z_1,\dots,z_{n+1})=(e^{-2\pi\sqrt{-1}\ep}z_1,\dots,e^{-2\pi\sqrt{-1}\ep}z_{n+1})
\]
and $d\psi(x)=\Id$. Since $G$ is homogeneous of degree two, $\psi$ is the lift of a contactomorphism $\vr: S^{2n+1} \to S^{2n+1}$. Moreover, we can make $\vr$ arbitrarily $C^1$-close to identity taking $\ep \to 0$. (Actually, we can take $\vr$ arbitrarily $C^\infty$-close to identity.)

Now, consider the conjugated action $\phi'_t:=\psi^{-1}\phi_t\psi$. The symplectic path $\Phi'(t)=d\phi'_t(\psi^{-1}(x))$ starts at the identity and is $C^0$-close to the path $\Phi(t)=d\phi_t(\psi^{-1}(x))$. Since $\bPsi_\ep$ also starts at the identity and is $C^0$-close to $\Phi$, in order to show that  $\Phi'$ is homotopic with fixed endpoints to $\bPsi_\ep$ it is enough to show that $\Phi'(1/2)=\bPsi_\ep(1/2)$. But
\begin{align*}
\Phi'(1/2) & = d(\psi^{-1})(\phi_{1/2}(x))\circ d\phi_{1/2}(\psi^{-1}(x))\circ d\psi(\psi^{-1}(x)) \\
& = d(\psi^{-1})(-x)\circ (-\Id)\circ d\psi(x) \\
& = -R_\ep \\
& = \bPsi_\ep(1/2),
\end{align*}
where the second and third equality follow from the construction of $\psi$ and the fact that $\phi_{1/2}=-\Id$.

\end{document}